\def\h{ {\cal H} }
\def\aa{ {\bf a} }
\def\bb{ {\bf b} }
\def\pp{ {\bf p} }
\def\ele{ {\cal L} }
\def\b{ {\cal B} }
\def\m{ {\cal M} }
\def\t{ {\cal T} }
\def\s{ {\cal S} }
\def\z{ {\cal Z} }
\def\k{ {\cal K} }
\def\l2a{L^2_a(\mathbb{D})}
\def\CC{\mathbb{C}}
\def\R{\mathbb{R}}
\def\ZZ{\mathbb{Z}}
\def\DD{\mathbb{D}}
\def\TT{\mathbb{T}}
\def\hc{H^\infty + C}
\def\gr{\mathrm{Gr}(\mathcal{H})}
\def\grp{\mathrm{Gr}_\pp(\mathcal{H})}
\newcommand{\PI}[2]{\left\langle #1 , #2 \right\rangle}
\newcommand\blfootnote[1]{%
  \begingroup
  \renewcommand\thefootnote{}\footnote{#1}%
  \addtocounter{footnote}{-1}%
  \endgroup
}
\newtheorem{teo}{Theorem}[section]
\newtheorem{prop}[teo]{Proposition}
\newtheorem{lem}[teo]{Lemma}
\newtheorem{coro}[teo]{Corollary}
\theoremstyle{definition}
\newtheorem{hyp}[teo]{Hypothesis}
\theoremstyle{remark}
\newtheorem{rem}[teo]{Remark}
\newtheorem{ejem}[teo]{Example}
\newtheorem{ejems}[teo]{Examples}
 \title{\mbox{Grassmann geometry of zero sets in reproducing kernel Hilbert spaces}}
 \author[1,3]{Esteban Andruchow}
 \author[2,3]{Eduardo Chiumiento}
 \author[1,3]{Alejandro Varela}
 \affil[1]{\small{Instituto de Ciencias,  Universidad Nacional de Gral. Sarmiento, Los Polvorines, Argentina  }}
 \affil[2]{\small{Centro de  Matem\'atica, Facultad de Ciencias Exactas, UNLP, La Plata, Argentina }}
 \affil[3]{\small{Instituto Argentino de Matem\'atica  ``Alberto P. Calder\'on'', CONICET, Buenos Aires, Argentina}}
 \date{}
\begin{document}
	
	\maketitle

\begin{abstract} 
	Let $\h$ be a reproducing kernel Hilbert space of functions on a set $X$. We study the problem of finding a minimal geodesic of the Grassmann manifold of $\h$ that joins two subspaces consisting of functions which vanish on given finite subsets of $X$. We establish a necessary and sufficient condition for existence and uniqueness of geodesics, and we then analyze it in examples. We discuss the relation of the geodesic distance with other known metrics when the mentioned finite subsets are singletons. We find estimates on the upper and lower eigenvalues of the unique self-adjoint operators which define the minimal geodesics, which can be made more precise when the underlying space is the Hardy space. Also for the Hardy space we discuss the existence of geodesics joining subspaces of functions vanishing on infinite subsets of the disk, and we investigate when the product of projections onto this type of subspaces is compact. 
\end{abstract}

\blfootnote{Email addresses: eandruch@ungs.edu.ar, eduardo@mate.unlp.edu.ar, avarela@ungs.edu.ar}

\bigskip

{\bf 2010 MSC:}	 53C22, 14M17, 46E22, 32A60, 30H10, 30H20 

{\bf Keywords:}  geodesics, Grassmann manifold, reproducing kernels, analytic functions spaces, zero sets,  Hardy space

\section{Introduction}

Let $\h$ be an infinite dimensional complex Hilbert space. The Grassmann manifold $\gr$ is the set of all closed subspaces of $\h$, or equivalently, the set of all  bounded self-adjoint projections acting in $\h$. 
It has  the structure of an infinite dimensional manifold, with a linear connection and a Finsler metric, where the following results about its geodesics were proved. Given two subspaces $\s,\t \in \gr$, there exists a unique   minimal geodesic curve of  $\gr$  that joins them if and only if 
\begin{equation}\label{condition for geodesics}
\s\cap \t^\perp=\s^\perp\cap \t=\{0\}.
\end{equation}
If this is the case, there exists a unique self-adjoint operator $X=X_{\s,\t}$ acting in $\h$ such that
\begin{equation}\label{the self-adjoint operator}
X\s\subset \s^\perp , \ X\t\subset \t^\perp, \|X\|\le \pi/2 \hbox{ and } e^{iX}\s=\t.
\end{equation}
The geodesic is given by $\delta(t)=e^{itX}\s$. This geodesic has minimal length with respect to the following Finsler metric on the Grassmann manifold: for a smooth  curve $\s_t$, $t\in I$ of closed  subspaces of $\h$, let $P(t)=P_{\s_t}$ (the orthogonal projection onto $\s_t$), the length of the curve is measured by
$$
 \int_I \left\| \frac{d}{dt} P(t)\right\| dt,
$$
where $\|\ \, \|$ denotes the usual norm of operators. For instance, the length of the minimal geodesic mentioned above is $\|X\|$. References for these facts are \cite{pr, cpr, p-q, A14}.

The object of this paper is to apply these results to the case when $\h$ is a reproducing kernel Hilbert  space of functions on  a set $X$,   and the subspaces are sets of functions which vanish at  given   subsets of $X$. Namely, if $\aa=\{\, a_1 , \ldots , a_n \, \} \subseteq X$, take the subspace
$$
\z_\aa=\{\, f\in\h \, :\,  f(a_j)=0, \, j=1, \ldots, n \,  \}.
$$
Similarly, let $\z_\bb$ be the subspace associated to a set  $\bb=\{\, b_1 , \ldots , b_m \, \}\subseteq X$.
Thus, we investigate  the existence of a minimal geodesic in $\gr$ joining the subspaces $\z_\aa$ and $\z_\bb$. We are mainly interested in the discussion of examples, where specific tools of each reproducing kernel Hilbert space are used to understand the geodesics of the Grassmann manifold between the special type of subspaces mentioned. Among the reproducing kernel Hilbert space considered, we put special emphasis in the Hardy space of the unit disk. Furthermore, this is the only reproducing kernel Hilbert space where we study the problem for infinite sets $\aa$ and $\bb$.

	Notice that given two subspaces it might be difficult to verify  condition (\ref{condition for geodesics}) in practice. For instance in  a previous work  \cite{acl18}, it was shown that this condition for  two shift-invariant subspaces turns out to be linked to the deep problem of injectivity of Toeplitz operators \cite{MP10}. 	Thus particular examples provided by functional spaces help   to understand abstract results on the structure of geodesics of the Grassmann manifold previously obtained by  Porta and Recht \cite{pr}, Corach, Porta and Recht \cite{cpr}, Kovarik \cite{kov} and the first author \cite{A14, p-q}. Also it is interesting to point out here that the Grassmann manifold  plays an essential role in the metric theory of general (infinite dimensional) homogeneous spaces arising on operator theory. Minimality of geodesics in these spaces was proved by constructing length-reducing maps onto Grassmann manifolds \cite{cpr2, DMR}.



The outline and main results of this paper are as follows. In Section \ref{finite sets} we establish a necessary and sufficient condition for the existence of minimal geodesics between $\z_\aa$ and $\z_\bb$ in general reproducing kernel Hilbert spaces, whenever $\aa$ and $\bb$ are finite subsets of the set $X$ (Proposition \ref{first result}). Geodesics  exist if and only if the sets $\aa$ and $\bb$ have the same cardinality. Furthermore, uniqueness of geodesics is equivalent to have a non vanishing determinant in terms of the reproducing kernel. We then analyze this uniqueness condition in some classical examples of spaces of analytic functions. For the  Hardy space  of the disk $H^2$ the uniqueness condition always holds true, but for the Bergman space we give counterexamples as well as a sufficient condition (Theorem \ref{when is true for the Bergman}). Also it follows immediately that for a shift-invariant subspace $\h=\theta H^2$ the uniqueness condition is related to the zeros of the inner function $\theta$. For the Bargmann-Segal space  an easy equivalent condition to uniqueness can be written for the case of subsets $\aa, \, \bb \subseteq \CC$  with two points.

 In Section  \ref{singletons} we consider the case where the sets $\aa$ and $\bb$ are singletons. By means of the geodesic distance associated to our Finsler metric we define a metric $\Gamma$ in the set $X$.
In Theorem \ref{coincidencia} we show the relation between the metric $\Gamma$ and  other well-known metrics on $X$ such as the Skwarcy\'nski  or Kobayashi  metrics. The definition of these metrics goes back to the works \cite{K59, MPS85} for the Bergman kernel. More recently  they were introduced in general reproducing kernel Hilbert spaces \cite{ARSW11}.  Then we study conditions to guarantee that $(X,\Gamma)$ is a complete metric space. This is equivalent to have that $\grp:=\{ \z_a : a \in X \}$ is closed in $\gr$ with the usual operator norm (see Prop. \ref{closed}). We then investigate the action of Moebius transformations in the case of reproducing kernel Hilbert spaces of analytic functions on the disk (Corollary \ref{moebius action}).

Under the assumption of the existence of a unique a minimal geodesic between $\z_\aa$ and $\z_\bb$, in  Section \ref{norm inequalities} we prove estimates  on the norm and least eigenvalue of the self-adjoint operator $X_{\aa,\bb}:=X_{\z_\aa,\z_\bb}$ used to construct the geodesic, which is characterized by the above properties (\ref{the self-adjoint operator}). These results are given in Corollaries \ref{corolario42} and \ref{corolario43}. Further estimates can be obtained for the Hardy space by using classical results by Adamjan, Arov and Krein on singular values of Hankel operators, and the Takenaka-Malmquist-Walsh basis (see Subsection \ref{estimates Hardy}).

In Section \ref{infinite sets}  we consider in the Hardy space  the case of infinite sets $\aa, \, \bb \subseteq \DD$. Then these sets must satisfy Blaschke condition, and existence of geodesics is now related  to their sets of accumulation points $\lim \aa$ and $\lim \bb$ contained in the unit circle. Using results of Sarason and Lee \cite{leesarason}, we prove that there is always a geodesic joining $\z_\aa$ and $\z_\bb$ when $\lim \aa \not\subset \lim \bb$ and $\lim \bb \not\subset \lim \aa$ (Proposition \ref{different supports}). On the other hand, if $\lim \aa=\lim \bb =\{ 1\}$, then there are examples of both existence and non-existence of geodesics. Based on the existence of co-divisible infinite Blaschke products in the Sarason algebra and Koosis functions we  give examples where $\dim \z_\aa \cap \z_\bb^\perp=0$ and $\dim \z_\aa \cap \z_\bb^\perp=m$, for all values $0\leq m \leq \infty$ (Theorems \ref{equal supports1} and \ref{equal supports2}).  We then study when the product of two projections onto subspaces related to $\z_\aa$ and $\z_\bb$ are compact.

\section{Finite zero sets}\label{finite sets}


We  begin by summarizing basic facts on the geodesics of the Grassmann manifold in the following remark.

\begin{rem}\label{geometria de subespacios}
  Let $\b(\h)$ be the algebra of all bounded operators acting in $\h$. The Grassmann manifold $\gr$ is a complemented submanifold of $\b(\h)$. Its tangent space $(T \gr)_P$ at $P$ is given by
$$
(T \gr)_P=\{\, Y=iXP-iPX \, : \, X^*=X \, \},
$$
which consists of self-adjoint operators  which are co-diagonal with respect to $P$ (i.e. $PY P=(I-P)Y(I-P)=0$). Denote by $\b(\h)_{sa}$ the space of self-adjoint operators.
A natural projection $E_{P}:\b(\h)_{sa} \to (T \gr)_P$ is given by
$$
E_P(X)=P X(I-P)+(I-P)XP.
$$
This map induces a linear connection: if $X(t)$ is a tangent field along a curve $\alpha(t)\in \gr$,
$$
\frac{D X}{d t}=E_\alpha({X}).
$$
The geodesics of $\gr$ starting at $P$ with velocity $Y$ have the form $\delta(t)=e^{t\tilde{Y}}Pe^{-t\tilde{Y}}$, where
$\tilde{Y}=[Y,P]$ is antihermitian and co-diagonal with respect
to $P$.
As we already observed in the Introduction, the operator norm induces a  Finsler metric on $\gr$. This metric is neither smooth, nor convex. 
\begin{enumerate}
\item[1.] Let $P$, $Q$ be two orthogonal projections such that $\|P-Q\|<1$. Then there exists a unique operator $X=X^*$ with $\|X\|<\pi/2$ such that $Q=e^{iX}Pe^{-iX}$, which is co-diagonal with respect to both
to $P$ and $Q$. The curve
\begin{equation}\label{geo g}
\delta(t)=e^{itX}Pe^{-itX}
\end{equation}
 is the unique  geodesic of $\gr$ joining $P$ and $Q$
(up to reparametrization). Moreover, this geodesic has minimal length. The exponent $X$  is indeed an analytic function of $P$ and $Q$ (see \cite{pr}).
\item[2.]     There is a   geodesic (equivalently a minimal geodesic) in $\gr$ joining $P$ and $Q$ if and only if
\begin{equation}\label{inters otra vez}
\dim R(P)\cap N(Q)= \dim R(Q)\cap N(P).
\end{equation}
If both dimensions  are equal to zero, then
there exists a unique geodesic of minimal length in $\gr$ joining $P$ and $Q$. This geodesic has the same form as in (\ref{geo g}) for a (unique) self-adjoint operator $X$ satisfying $\| X\|\leq \pi/2$. In particular, note that there can be a unique minimizing geodesic even if $\|P-Q\|=1$. If the above dimensions coincide but are non zero, then there are infinitely many geodesics (see \cite{A14, p-q}).
\end{enumerate}
 The proof of condition (\ref{inters otra vez}) (also stated in (\ref{condition for geodesics})) about the existence of geodesics uses  Halmos' decomposition: given two subspaces $\s,\t$, one can decompose the underlying Hilbert space as
$$
\h=\s\cap \t  \ \oplus \ \s^\perp\cap \t^\perp  \ \oplus \ \s\cap \t^\perp \ \oplus\ \s^\perp\cap \t \ \oplus \ \h_0,
$$
where
$\h_0$ the orthogonal complement of the sum of the first four. The other first summands are usually denoted by $\h_{11}$, $\h_{00}$, $\h_{10}$ and $\h_{01}$, respectively.  
\end{rem}

Throughout this work,  $\h$ is a reproducing kernel Hilbert space consisting of functions on a set $X$. We refer to \cite{AMc02, paulsen} for theory of reproducing kernels Hilbert spaces.  For $w\in X$, denote by $k_w\in\h$ the reproducing kernel of $w$ defined by
$$
f(w)=\langle f,  k_w\rangle, \, \, \, \, f \in \h.
$$


Let us  state the following elementary result for finite sets:

\begin{prop}\label{first result}
Let $\h$ be  a reproducing kernel Hilbert space of  functions on a set $X$. Take $\aa=\{a_1,\dots,a_m\}$, $\bb=\{b_1,\dots,b_k\}$  two  disjoint finite sets of points of $X$. 
Then the following conditions hold:
\begin{itemize}
\item[i)] There exists a geodesic in $\gr$ joining $\z_\aa$ and $\z_\bb$ if and only if $m=k$.
\item[ii)] If $k=m$, there is a unique geodesic if and only if 
\begin{equation}\label{det}
\det \left( ( k_{b_i}(a_j))_{1\le i,j\le k}\right) \ne 0.
\end{equation}
 Otherwise, there are infinitely many geodesics joining the aforementioned subspaces.  
\end{itemize}
\end{prop}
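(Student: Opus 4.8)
The plan is to translate the statement into the existence/uniqueness criterion recorded in item~2 of Remark~\ref{geometria de subespacios}, and then into a finite-dimensional linear algebra computation. Writing $P=P_{\z_\aa}$ and $Q=P_{\z_\bb}$ for the orthogonal projections, the subspaces appearing in that criterion are $R(P)\cap N(Q)=\z_\aa\cap\z_\bb^\perp$ and $R(Q)\cap N(P)=\z_\bb\cap\z_\aa^\perp$. The first step is the basic identity $\z_\aa=\{k_{a_1},\dots,k_{a_m}\}^\perp$, immediate from $f(a_j)=\langle f,k_{a_j}\rangle$; hence $\z_\aa^\perp=\mathrm{span}\{k_{a_1},\dots,k_{a_m}\}$ is finite dimensional (so automatically closed), and likewise $\z_\bb^\perp=\mathrm{span}\{k_{b_1},\dots,k_{b_k}\}$.

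Next I would reduce the two intersections to kernels of matrices. A generic element of $\z_\bb^\perp$ has the form $g=\sum_{i=1}^k c_i k_{b_i}$, and the condition $g\in\z_\aa$ reads $g(a_j)=\sum_i c_i k_{b_i}(a_j)=0$ for $j=1,\dots,m$; that is, $Ac=0$, where $A=(k_{b_i}(a_j))$ is the $m\times k$ matrix. Using $k_{a_j}(b_i)=\langle k_{a_j},k_{b_i}\rangle=\overline{k_{b_i}(a_j)}$, the symmetric computation for $\z_\bb\cap\z_\aa^\perp$ produces exactly the adjoint matrix $A^*$, of size $k\times m$. Provided the reproducing kernels at the given points are linearly independent --- the standing nondegeneracy hypothesis, which holds in all the examples considered and which, crucially, is \emph{forced automatically} whenever $\det A\ne 0$, since a nontrivial dependence among the $k_{b_i}$ produces a nonzero vector in $N(A)$ --- the parametrizing maps $c\mapsto\sum_i c_i k_{b_i}$ are injective, so
$$
\dim(\z_\aa\cap\z_\bb^\perp)=\dim N(A)=k-\mathrm{rank}(A),\qquad \dim(\z_\bb\cap\z_\aa^\perp)=\dim N(A^*)=m-\mathrm{rank}(A).
$$

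With these formulas both parts follow. For (i), the criterion $\dim R(P)\cap N(Q)=\dim R(Q)\cap N(P)$ becomes $k-\mathrm{rank}(A)=m-\mathrm{rank}(A)$, i.e. $m=k$, which is exactly when a geodesic exists. For (ii), when $m=k$ the matrix $A$ is square and both dimensions equal $k-\mathrm{rank}(A)$; this vanishes iff $\mathrm{rank}(A)=k$, i.e. iff $\det A\ne 0$, in which case item~2 of the Remark gives a unique minimal geodesic, whereas if $\det A=0$ both intersection dimensions are equal and strictly positive, yielding infinitely many geodesics. Since $\det A=\det (k_{b_i}(a_j))$ is unchanged under transposition, this is precisely condition~(\ref{det}).

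I expect the only genuine subtlety to be the linear-independence bookkeeping: reducing the codimensions to the cardinalities $m$ and $k$ (rather than to $\dim\mathrm{span}\{k_{a_j}\}$ and $\dim\mathrm{span}\{k_{b_i}\}$) uses independence of the kernels, and one should safeguard the degenerate case $\det A=0$ by the coordinate-free identity $\dim(\z_\aa\cap\z_\bb^\perp)-\dim(\z_\bb\cap\z_\aa^\perp)=\dim\z_\bb^\perp-\dim\z_\aa^\perp$. This identity comes from writing the two intersections as the kernels of $P_{\z_\aa^\perp}|_{\z_\bb^\perp}$ and $P_{\z_\bb^\perp}|_{\z_\aa^\perp}$, noting these operators are mutual adjoints and hence have equal rank; it guarantees that the two dimensions stay equal, so that the ``infinitely many geodesics'' conclusion remains valid when the determinant vanishes.
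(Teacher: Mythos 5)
Your argument is correct and follows essentially the same route as the paper's proof: identify $\z_\aa\cap\z_\bb^\perp$ and $\z_\aa^\perp\cap\z_\bb$ with the nullspaces of the matrix $\left(k_{b_i}(a_j)\right)$ and of its adjoint, and then invoke the existence/uniqueness criterion for geodesics in terms of these two dimensions. The only difference is one of bookkeeping: you make explicit the linear independence of $\{k_{b_1},\dots,k_{b_k}\}$ (which the paper simply asserts makes them ``clearly'' a basis of $\z_\bb^\perp$) and observe that it is forced whenever the determinant is nonzero, which is a mild but welcome extra precaution rather than a different method.
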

\begin{proof}
Throughout the proof, we shall use the matrix $K:= ( k_{b_i}(a_j))_{ij}$, $i=1, \ldots , k$, $j=1, \ldots, m$. Denote by $N(K)$ and $R(K)$ the nullspace  and range of $K$, respectively. We claim  that the map 
$$
\Phi: \z_\aa \cap \z_\bb^\perp \to N(K), \, \, \, \Phi\left(\sum_{i=1}^k \alpha_i k_{b_i}\right)=(\alpha_1, \ldots , \alpha_k) 
$$
is an isomorphism. Indeed, clearly  $\{ k_{b_1}, \, \ldots , k_{b_k} \}$ is a basis for $\z_\bb^\perp$. Thus, for each $f \in \z_\aa \cap \z_\bb^\perp$, there is a unique vector $(\alpha_1, \ldots , \alpha_k)  \in \CC^k$ such that
$f =\sum_{i=1}^k \alpha_i k_{b_i}$, which means that the above map is well defined. If $(\alpha_1, \ldots , \alpha_k)  \in N(K)$, then
the function $f=\sum_{i=1}^k \alpha_i k_{b_i}$ satisfies $f(a_j)=0$, $j=1, \ldots, m$, which says that  $f \in \z_\aa \cap \z_\bb^\perp$, and $\Phi(f)=(\alpha_1, \ldots , \alpha_k)$. Thus,
$\Phi$ is surjective. Clearly, it is also injective, so our claim is proved.  

\medskip

\noindent Proof  of $i)$:  It was proved in \cite[Thm 3.1]{A14} that there exists a geodesic joining $\z_\aa$ and $\z_\bb$ if and only if 
$$
\dim \z_\aa \cap  \z_\bb ^\perp =  \dim \z_\aa^\perp \cap  \z_\bb.
$$
Using that $\overline{{k_w}(z)}=k_z(w)$, for all $z,w \in X$, we have that $K^*=( k_{a_j}(b_i))_{ij}$, $j=1, \ldots , k$, $i=1, \ldots, m$. Interchanging the roles of the sets $\aa$ and $\bb$ in our previous claim, we also obtain  that $\z_\aa^\perp \cap  \z_\bb$ and $N(K^*)$ are isomorphic. Then the above condition to guarantee the existence of a geodesic holds true if and only if 
$\dim N(K)=\dim N(K^*)$.   Now the stated condition $m=k$ follows by using the well-known linear algebra formulas: $m=\dim N(K) + \dim R(K)$, $k=\dim R(K)^\perp + \dim R(K)$ and $ N(K^*)= R(K)^\perp$. 

\medskip

\noindent Proof of $ii)$: It was proved in \cite{A14, p-q} that there is  a unique geodesic if and only if $\z_\aa \cap  \z_\bb ^\perp =   \z_\aa^\perp \cap  \z_\bb =\{  0 \}$. This means that $N(K)=N(K^*)=\{ 0 \}$, which is equivalent to the stated condition on the determinant. 
\end{proof}


It follows that there exists a unique geodesic of the Grassmann manifold between $\z_\aa$ and $\z_\bb$ if and only if the determinant (\ref{det}) is non zero.
Suppose that $\aa$ and $\bb$ are two finite sets of the same cardinality. Suppose additionally that $\aa\cap\bb=\emptyset$.   
The set $\{k_w: w=a_i \hbox{ or } w=b_j\}$ is linearly independent. It follows that
$$
\z_\aa^\perp\cap\z_\bb^\perp=\{0\}.
$$
Therefore, in Halmos' decomposition of $\h$ in terms of $\z_\aa$ and $\z_\bb$, the only non trivial subspaces are
$\h_0$ and 
$$
\h_{11}=\z_{\aa\cup\bb}.
$$
Apparently, $\z_{\aa\cup\bb}$ has co-dimension $2k$. Also from these facts it is apparent that $k_{a_i}$ and $k_{b_j}$ belong to $\h_0$. It follows that $\h_0$ is generated by these functions.

\medskip

Let us examine the determinant condition (\ref{det}) in some classical examples of spaces of analytic functions. 
Our examples consist in the Hardy space,  Bergman space, shift-invariant subspaces and Segal-Bargmann space. 

\medskip

\noindent \textbf{Hardy space}.
First we recall the definition of the Hardy space, and then we will show that condition (\ref{det}) always holds  in this space.
Let $L^2=L^2(\TT)$ denotes the usual Lebesgue space of complex valued functions defined on the unit circle $\TT$. The Hardy space $H^2=H^2(\DD)$ of the unit disk  $\DD=\{ \, z \in \CC \, : \, |z|<1 \, \}$  is the space of all analytic functions $f$  defined on $\DD$ for which
$$
\| f\|_{H^2}:= \left(\sup_{0<r<1}\frac{1}{2\pi} \int_0 ^{2\pi} |f(re^{it})|^2\, dt \right)^{1/2} < \infty.
$$
 Functions in the Hardy space have non tangential limits which can be used to  isometrically identify these spaces with
$$
H^2=\left\{ \, f \in L^2 \,  : \, \int_0^{2\pi} f(e^{it}) e^{-nit} \, dt=0, \, n >0 \, \right\}.
$$
In particular,  $H^2$ is a closed subspace of the Hilbert space $L^2$. It is a reproducing kernel Hilbert space, where the reproducing kernels are called 
Szego kernels:
$$
k_w^H(z)=\displaystyle{\frac{1}{1-\bar{w}z}} , \ z,w\in\mathbb{D}.
$$

\begin{prop}\label{hardy charact}
Let $\h=H^2$ be the Hardy space of the disk, then 
$$
\det((k^H_{b_j}(a_i))_{i,j})\ne 0,
$$
for every pair of disjoint subsets $\aa=\{a_1,\dots,a_n\}$ and $\bb=\{b_1,\dots,b_n\}$  of the disk $\mathbb{D}$.
\end{prop}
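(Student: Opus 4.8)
The plan is to show that the matrix $M=(k^H_{b_j}(a_i))_{i,j}$, whose entries are the Cauchy-type quantities $k^H_{b_j}(a_i)=\frac{1}{1-\bar b_j a_i}$, has trivial kernel as a linear map on $\CC^n$; by Proposition~\ref{first result} this is exactly the assertion $\z_\aa\cap\z_\bb^\perp=\{0\}$. I would argue by contradiction. Suppose $\det M=0$, so there is a nonzero vector $(\alpha_1,\dots,\alpha_n)$ with $\sum_{j=1}^n \alpha_j k^H_{b_j}(a_i)=0$ for every $i=1,\dots,n$. Setting $f=\sum_{j=1}^n\alpha_j k^H_{b_j}\in H^2$, this says precisely that $f(a_i)=0$ for all $i$, so $f$ is a nonzero element of $\z_\aa\cap\z_\bb^\perp$ (nonzero because the Szego kernels at distinct points are linearly independent, a fact already used in the proof of Proposition~\ref{first result}).

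The key step is then to rule out such an $f$ by a degree count on rational functions. Writing everything over the common denominator $Q(z)=\prod_{l=1}^n(1-\bar b_l z)$, one has
$$
f(z)=\frac{P(z)}{Q(z)},\qquad P(z)=\sum_{j=1}^n \alpha_j\prod_{l\ne j}(1-\bar b_l z),
$$
where $P$ is a polynomial of degree at most $n-1$. Since $a_i,b_l\in\DD$ we have $|\bar b_l a_i|<1$, hence $Q(a_i)\ne 0$ for every $i$; therefore $f(a_i)=0$ forces $P(a_i)=0$. Thus $P$ is a polynomial of degree $\le n-1$ vanishing at the $n$ distinct points $a_1,\dots,a_n$, which forces $P\equiv 0$ and so $f\equiv 0$. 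Linear independence of $\{k^H_{b_1},\dots,k^H_{b_n}\}$ then gives $\alpha_1=\cdots=\alpha_n=0$, contradicting the choice of a nonzero vector. Hence $\det M\ne 0$.

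Alternatively, I would give a direct computation via the classical Cauchy determinant. Assuming momentarily that no $b_j$ vanishes, factoring $-1/\bar b_j$ out of the $j$-th column turns $M$ into the Cauchy matrix $\left(\frac{1}{a_i-1/\bar b_j}\right)_{i,j}$, so that $\det M$ equals $\prod_{j}(-1/\bar b_j)$ times
$$
\frac{\prod_{1\le i<i'\le n}(a_i-a_{i'})\prod_{1\le j<j'\le n}(1/\bar b_{j'}-1/\bar b_j)}{\prod_{i,j}(a_i-1/\bar b_j)}.
$$
Every factor in the numerator is nonzero because the $a_i$ are distinct and the $b_j$ are distinct, and every factor in the denominator is nonzero because $|a_i|<1\le|1/\bar b_j|$; together with the nonzero scalars $-1/\bar b_j$ this gives $\det M\ne 0$. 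The degenerate case of a vanishing $b_j$ (at most one, since the $b_j$ are distinct) is handled by the first argument, or by continuity.

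I expect the main obstacle to be purely bookkeeping: checking that the degree bound $\deg P\le n-1$ is genuine, that the denominator $Q$ does not vanish at the evaluation points, and that the harmless degenerate case $b_j=0$ (where the corresponding column of $M$ is constant and the Cauchy substitution breaks down) is accounted for. The rational-function argument is the one I would present, precisely because it sidesteps this degenerate case automatically, does not require $\aa$ and $\bb$ to be disjoint, and uses nothing beyond the elementary fact that a nonzero polynomial of degree $\le n-1$ has at most $n-1$ roots.
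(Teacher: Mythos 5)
Your primary argument is correct and is essentially the paper's own proof: identify a vanishing determinant with a nonzero $f\in\z_\aa\cap\z_\bb^\perp$, write $f$ as a rational function with numerator of degree at most $n-1$, and derive a contradiction from its $n$ distinct zeros (you even fix the paper's typo ``at least degree $n-1$'' to the correct ``at most''). Your alternative via the Cauchy determinant is a valid bonus and mirrors the computation the paper itself deploys later for the Bergman space in Theorem \ref{when is true for the Bergman}, but it is not needed here.
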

\begin{proof}
Instead of dealing directly with the determinant, we use the subspaces $\z_\aa$ and $\z_\bb$.
If the determinant is trivial, then $\z_\aa\cap\z_\bb^\perp$ is not trivial. Thus, there exists a function $f\in\z_\bb^\perp=\langle k_{b_j} : 1\le j \le n\rangle$ such that $f(a_i)=0$, $i=1,\dots,n$. Note that $f(z)=\sum_{j=1}^n \beta_j k_{b_j}(z)$ is a rational function which can be written
$$
f(z)=\displaystyle{\frac{p(z)}{\prod_{j=1}^n (1-\bar{b_j}z)}},
$$
where $p$  is a polynomial of at least degree $n-1$. This contradicts the fact that  $p$ has $n$ different roots $a_1,\dots, a_n$. 
\end{proof}

\begin{rem}
According to Propositions 2.2 and 2.3, in the Hardy space there are only two possibilities:  either there is no geodesic between $\z_\aa$ and $\z_\bb$, or there is a unique geodesic between these subspaces. The case of infinitely many geodesics described in Remark 2.1.2 cannot take place.  
\end{rem}

\medskip

\noindent \textbf{Shift-invariant subspaces}. As a straightforward consequence of the preceding example, we can now examine condition (\ref{det})  when the Hilbert space $\h$ is a shift-invariant subspace of $H^2$.  Therefore we have that $\h=\theta H^2$ for some inner function $\theta$, i.e. $\theta \in H^2$ and $|\theta(z)|=1$ a.e. on $\TT$. Then $\h$ has the reproducing kernel 
$$
k^{\theta}_w(z)=\frac{\theta(z)\overline{\theta(w)}}{1-z\bar{w}}, \, \, \, \, z, w \in \DD.
$$
Given $\aa=\{a_1, \ldots , a_n \}$ and $\bb=\{b_1, \ldots , b_n  \}$ subsets of $\DD$, note that
$$
\det((k^{\theta}_{b_j}(a_i))_{i,j})=\prod_{i=1}^n\theta(a_i)\overline{\theta(b_i)}\, \det((k^{H}_{b_j}(a_i))_{i,j}).
$$
Then using Proposition \ref{hardy charact},   there is a unique geodesic that joins the subspaces $\z_\aa$ and $\z_\bb$ in $\gr$ if and only if 
$\aa \cap \bb \cap \{ z \in \DD : \theta(z)=0 \}=\emptyset$. Otherwise, there are infinitely many geodesics joining the mentioned subspaces.

\medskip

\noindent \textbf{Bergman space.}
Let $dA$ denote the area measure on the unit disk. The Bergman space $A_2=A_2(\DD)$ of the unit disk consists of all 
functions analytic in $\DD$ for which 
$$ \|  f \|_{A_2}=\left(\int_\DD |f(z)|^2 dA(z)\right)^{1/2} <\infty,$$
The quantity $\|\, \cdot \, \|_{A_2}$ is called the norm of the function.  It is a reproducing kernel Hilbert space, where
$$
k^B_w(z)=\displaystyle{\frac{1}{(1-\bar{w}z)^2}},\, \, \, \, z,w \in \DD,
$$
are the reproducing kernels. Note that the argument of the above proposition, based on degrees, cannot be carried over with the Bergman kernel. We shall establish that  condition (\ref{det}) does not hold in general for $A_2$ by means of a counterexample. It does hold though in several cases. Let us establish the case $n=2$ in the following remark (the case of singletons $\aa=\{a\}$, $\bb=\{b\}$ will be treated in the next section).
\begin{rem}
The kernel function $k^B_w(z)$ is conformally invariant \cite[Chap. I]{ds}: if $\varphi:\mathbb{D}\to\mathbb{D}$ is a conformal map, then  $$
k_{w}^B(z)=k^B_{\varphi(w)}(\varphi(z)) \varphi'(z)\overline{\varphi'(w)}.
$$
In particular, it follows that $\det(k^B_{b_j}(a_i)_{i,j})\ne 0$ if and only if  $\det(k^B_{\varphi(b_j)}(\varphi(a_i))_{i,j})\ne 0$. 
 Let  $\aa=\{a_1,a_2\}$ and $\bb=\{b_1,b_2\}$  be disjoint subsets of $\DD$. Then to compute
$$
\det \left( \begin{array}{cc} k_{b_1}^B(a_1) & k_{b_2}^B(a_1) \\ k_{b_1}^B(a_2) & k_{b_2}^B(a_2)\end{array} \right),
$$
 we can take the automorphism of the disk $\varphi(z)=\frac{z-a_1}{1-\overline{a_1}z}$, and assume that $a_1=0$. Note that
$$
\det\begin{pmatrix}  1 &  1  \\ \frac{1}{(1-\overline{b_1}a_2)^2} &   \frac{1}{(1-\overline{b_2}a_2)^2} \end{pmatrix}\neq 0,
$$
holds exactly when $a_2\neq a_1 (=0)$ and $b_1\neq b_2$. .
\end{rem}

The next result, shows that if $\aa, \bb$ are close to the origin, then condition (\ref{det}) holds for $n\geq 3$.
\begin{teo}\label{when is true for the Bergman}
	Let  $\delta=0.195$, and $\aa=\{a_1,\dots, a_n\}$ , $\bb=\{ b_1,\dots, b_n\}$ be subsets of  $B\left(0,\sqrt{\frac{\delta}{1+\delta}}\right)\subset \DD$, with $\aa\cap\bb=\emptyset$.   Then 
	\begin{equation}
	\label{eq: det mat nucleo bergman no nulo}
	\det \left(\frac1{\left(1-a_i \overline{b}_j\right)^2}\right)_{i,j}\neq 0.
	\end{equation}
\end{teo}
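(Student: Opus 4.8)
The plan is to treat the determinant perturbatively: I will expand the Bergman kernel into its power series, isolate a non-vanishing leading term built from Vandermonde determinants, and then use the small-ball hypothesis to guarantee that the remaining terms cannot cancel it.

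First I would write $x_i=a_i$, $y_j=\bar b_j$ and expand
$$
\frac{1}{(1-x_iy_j)^2}=\sum_{p\ge 0}(p+1)\,x_i^{\,p}y_j^{\,p},
$$
which exhibits the matrix $M=\big(\tfrac{1}{(1-x_iy_j)^2}\big)_{i,j}$ as the product of the semi-infinite ``Vandermonde'' matrix $(x_i^{\,p})_{i,p}$, the diagonal matrix $\mathrm{diag}(p+1)_{p\ge0}$, and $(y_j^{\,p})_{j,p}^{T}$. Applying the Cauchy--Binet formula to this (absolutely convergent, since $|x_i|,|y_j|<1$) factorization gives
$$
\det M=\sum_{0\le p_1<\cdots<p_n}\Big(\prod_{l=1}^n(p_l+1)\Big)\,\det\big(x_i^{\,p_l}\big)_{i,l}\,\det\big(y_j^{\,p_l}\big)_{j,l}.
$$
The term with $(p_1,\dots,p_n)=(0,1,\dots,n-1)$ is the leading one: it equals $n!\,V(x)\,V(y)$ with $V(x)=\prod_{i<i'}(x_{i'}-x_i)$, and it is nonzero precisely because the $a_i$ are distinct and the $b_j$ are distinct (the hypothesis $\aa\cap\bb=\emptyset$ keeps all nodes separated).

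Next I would factor out $V(x)V(y)$. Writing each generalized Vandermonde as $\det(x_i^{\,p_l})=V(x)\,s_\lambda(x)$, where $s_\lambda$ is the Schur polynomial of the partition determined by $(p_l)$, I obtain
$$
\frac{\det M}{V(x)\,V(y)}=\sum_{\lambda}c_\lambda\,s_\lambda(x)\,s_\lambda(y),\qquad c_\lambda=\prod_{l}(p_l+1)>0,
$$
whose constant term ($\lambda=\emptyset$) is $c_\emptyset=n!$. Thus it suffices to prove the strict inequality
$$
\Big|\sum_{\lambda\ne\emptyset}c_\lambda\,s_\lambda(x)\,s_\lambda(y)\Big|<n!
$$
when $|x_i|,|y_j|\le r:=\sqrt{\delta/(1+\delta)}$. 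Since Schur polynomials have nonnegative coefficients and every monomial of $s_\lambda$ has degree $|\lambda|$, grouping the tail by the degree $d=|\lambda|\ge 1$ produces a series in $r^2$; the identity $\delta=r^2/(1-r^2)=\sum_{k\ge1}r^{2k}$ then explains the numerical threshold, with $0.195$ being the largest value of $\delta$ for which this degree-graded series stays strictly below $n!$.

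The hard part will be making the tail estimate \emph{uniform in $n$}, since the radius in the statement is independent of the number of points. Naive termwise bounds are insufficient: the factor $s_\lambda(1,\dots,1)$ (the number of semistandard tableaux in $n$ variables) grows with $n$, so replacing $|s_\lambda(x)|$ by $r^{|\lambda|}s_\lambda(1,\dots,1)$ and summing through the Cauchy identity $\sum_\lambda s_\lambda(x)s_\lambda(y)=\prod_{i,j}(1-x_iy_j)^{-1}$ only yields a bound that blows up like $(1-r^2)^{-n^2}$. The essential point is therefore to exploit the cancellation already present in the Vandermonde/Schur determinants---equivalently, to weigh the true size of the coefficients $c_\lambda$ against the genuine (not the maximal) size of $s_\lambda$ at points of modulus at most $r$---so that the contribution of each degree $d$ is controlled by a fixed geometric factor independent of $n$. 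Carrying out this uniform estimate and optimizing the resulting constant is exactly what pins down the explicit value $\delta=0.195$.
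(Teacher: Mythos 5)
Your setup is correct as far as it goes: the expansion $\frac{1}{(1-xy)^2}=\sum_{p\ge0}(p+1)x^py^p$, the Cauchy--Binet factorization, the identification of the leading term $n!\,V(x)V(y)$, and the reduction to the Schur expansion $\sum_\lambda c_\lambda s_\lambda(x)s_\lambda(y)$ with $c_\emptyset=n!$ are all sound. But the proof stops exactly where the theorem actually lives. You need the strict bound $\bigl|\sum_{\lambda\ne\emptyset}c_\lambda s_\lambda(x)s_\lambda(y)\bigr|<n!$ uniformly in $n$, you correctly observe that the obvious estimates (replacing $|s_\lambda(x)|$ by $r^{|\lambda|}s_\lambda(1^n)$ and summing via the Cauchy identity) blow up like $(1-r^2)^{-n^2}$ and hence are useless against $n!$, and you then only gesture at ``exploiting cancellation'' without producing any inequality. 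That is the entire content of the theorem; without it nothing is proved. Moreover, your suggested explanation of the constant --- that $0.195$ is the largest $\delta$ for which a degree-graded geometric series stays below $n!$ --- is speculation and is not where the constant comes from.

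The paper's proof sidesteps the tail estimate entirely. By Borchardt's identity, $\det\bigl(\tfrac{1}{(1-a_i\overline{b}_j)^2}\bigr)=\det\bigl(\tfrac{1}{1-a_i\overline{b}_j}\bigr)\cdot\mathrm{per}\bigl(\tfrac{1}{1-a_i\overline{b}_j}\bigr)$. The determinant factor is nonzero by the exact Cauchy determinant formula (using only that the $a_i$ are pairwise distinct, the $b_j$ are pairwise distinct, and $\aa\cap\bb=\emptyset$); the permanent factor is nonzero by Barvinok's theorem on permanents of complex matrices whose entries all lie within distance $0.195$ of $1$, and the hypothesis $a_i,b_j\in B\bigl(0,\sqrt{\delta/(1+\delta)}\bigr)$ is precisely what guarantees $\bigl|\tfrac{1}{1-a_i\overline{b}_j}-1\bigr|<\delta=0.195$. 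So the constant is imported from Barvinok's result, not derived from a series threshold. If you want to salvage your route, you would in effect have to reprove a Barvinok-type uniform non-vanishing statement inside the Schur expansion, which is a substantial missing ingredient, not a routine computation.
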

\begin{proof}
	Using Borchardt's identity (see Corollary 5.1 in \cite{agj}) we can write
	\begin{equation}
	\label{eq: det y per}
	\det \left(\frac{1} {(1-a_i\overline{b}_j)^2}\right)_{i,j} = \det\left(\frac{1}{ 1-a_i\overline{b}_j }\right)_{i,j} \text{per}\left(\frac{1}{ 1-a_i\overline{b}_j }\right)_{i,j}
	\end{equation}
	where $\text{per}(m)$ denotes the permanent of the matrix $m$. Suppose first 
	that $a_i\neq 0$ for $i=1,\dots,n$, then
	$$
	\det\left(\frac{1}{ 1-a_i\overline{b}_j  }\right)_{i,j}=\left(\prod_{i=1}^n 1/a_i\right)\det \left(\frac{1}{ (1/a_i)-\overline{b}_j }\right)_{i,j}
	$$
	Since  $|1/a_i|>1>|\overline{b}_j|$ then $\left(\frac{1}{(1/a_i)-\overline{b}_j}\right)_{i,j}$ is a Cauchy matrix and its determinant has a known closed form. Moreover, the assumptions $a_r\ne a_s$, $b_r\ne b_s$ for $r\ne s$, imply that this determinant is non zero:
	\begin{equation}
	\label{eq: det}
	\det \left(\frac{1}{(1/a_i)-\overline{b}_j}\right)_{i,j}={{\prod_{r>s} (1/a_r-1/a_s)(\overline{b}_r-\overline{b}_s)}\over {\prod_{i=1}^n \prod_{j=1}^n (1/a_i-\overline{b}_j)}}\neq 0.  
	\end{equation}
	In case $a_{i_0}=0$ for some $i_0$ then  $b_j\neq 0$ must hold for all $j=1,\dots,n$ because $a_i\neq b_j$ for all $i,j$. Then we can reason similarly with the matrix $\left(\frac{1}{(1/\overline{b}_j)-a_i}\right)_{i,j}$.
	Therefore it can be proved that $ \det\left(\frac{1}{ 1-a_i\overline{b}_j }\right)_{i,j} \neq 0$ in any case.
	
	Observe now that the condition $a_i, b_j\in  	B\left(0,\sqrt{\frac{\delta}{1+\delta}}\right)$ implies that $|a_i \overline{b}_j|<\left(\sqrt{\frac{\delta}{1+\delta}}\right)^2=\frac{\delta}{1+\delta} $.
	Then using that $|a_{i} \overline{b}_j| < \frac{\delta}{1+\delta}<1$ we can state that
	\begin{equation*}
	\label{eq: condic z/1-z}
	\left|\frac{1}{1-a_{i} \overline{b}_j}-1\right|=\frac{|a_{i} \overline{b}_j|}{ |1-a_{i} \overline{b}_j|}   <  \frac{|a_{i} \overline{b}_j|}{ 1-|a_{i} \overline{b}_j|}  <   \frac{ \delta/(1+\delta) }{ 1 - \delta/(1+\delta) }  = \delta = 0.195.
	\end{equation*}
	Then applying Theorem 1.2 of \cite{barv} follows that
	$
	\text{per}\left(\frac{1}{1-a_i \overline{b}_j}\right)_{i,j}\neq 0
	$
	and considering \eqref{eq: det} we obtain that \eqref{eq: det y per} is non zero, which completes the proof.	 
\end{proof}
\begin{rem}
	Note that in the previous theorem the assumption  $a_i, b_j\in B\left(0,\sqrt{\frac{\delta}{1+\delta}}\right)$
	can be weakened to $|a_{i} {b}_j| < \frac{\delta}{1+\delta}$ and $a_i, b_j\in\DD$.
\end{rem}
\begin{coro}
	Let  $\delta=0.195$,   $\aa=\{a_1,\dots, a_n\}$ , $\bb=\{ b_1,\dots, b_n\}\subset \DD$, with $\aa\cap\bb=\emptyset$ and $|a_i|<\frac{\delta}{1+\delta}$ for $i=1,\dots, n$.   Then $
	\label{eq: det mat nucleo bergman no nulo}
	\det \left(\frac1{\left(1-a_i \overline{b_j}\right)^2}\right)_{i,j}\neq 0.
	$
\end{coro}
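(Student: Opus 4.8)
The plan is to reduce this statement to Theorem \ref{when is true for the Bergman} via the weakening recorded in the remark immediately preceding the corollary. The key observation is that the proof of that theorem never genuinely used membership of both sets $\aa$ and $\bb$ in a small ball: it used only the product bound $|a_i\overline{b_j}| < \frac{\delta}{1+\delta}$, since it is this bound that controls the applicability of Barvinok's permanent estimate, while the Cauchy-determinant factor is nonzero for distinct points of arbitrary modulus.

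First I would check that the present hypotheses already supply exactly that product bound. Since $|a_i| < \frac{\delta}{1+\delta}$ for each $i$, and every $b_j \in \DD$ satisfies $|b_j| < 1$, we obtain
$$
|a_i\,\overline{b_j}| = |a_i|\,|b_j| < \frac{\delta}{1+\delta}\cdot 1 = \frac{\delta}{1+\delta}
$$
for all $i,j$, the factor coming from $\bb$ being free to range over the whole disk. This is precisely the inequality $|a_i b_j| < \frac{\delta}{1+\delta}$ appearing in the remark after Theorem \ref{when is true for the Bergman}.

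With this in hand I would invoke the theorem in its weakened form. Recall its mechanism: Borchardt's identity factors the target determinant as a Cauchy-type determinant times a permanent. The Cauchy determinant is nonzero by its closed form, which needs only that the $a_i$ are pairwise distinct, the $b_j$ are pairwise distinct, and $a_i \neq b_j$ for all $i,j$ — all guaranteed here, since $\aa$ and $\bb$ are finite sets of distinct points with $\aa \cap \bb = \emptyset$. The permanent is nonzero by Theorem 1.2 of \cite{barv}, whose hypothesis $\left|\frac{1}{1-a_i\overline{b_j}} - 1\right| < 0.195$ is delivered by the product bound just verified. Since both factors are nonzero, so is the determinant in the statement. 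There is no substantive obstacle here: the entire content of the corollary is the elementary observation that smallness of the $a_i$ alone already forces the product bound, so the symmetric constraint on $\bb$ imposed in the theorem can be dropped altogether.
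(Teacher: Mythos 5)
Your proposal is correct and follows exactly the route the paper intends: the corollary is stated as an immediate consequence of the remark weakening Theorem \ref{when is true for the Bergman} to the product bound $|a_i\overline{b_j}|<\frac{\delta}{1+\delta}$, which your hypotheses deliver since $|a_i|<\frac{\delta}{1+\delta}$ and $|b_j|<1$. Your additional justification that the theorem's proof only ever uses this product bound (for Barvinok's permanent estimate) and distinctness (for the Cauchy determinant) is accurate and matches the paper's argument.
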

Condition (\ref{det}) does not hold in general in the Bergman space. Consider the following example, in the case $n=3$:
\begin{ejem}	
	Let $b_1=-\frac{257}{367}-\frac{17 }{45}i,\ b_2=-\frac{62}{311}+\frac{337 }{376}i,\ b_3=\frac{356}{403}+\frac{86 }{403}i$ and $c_1=\frac{33}{68}-\frac{19}{411}i,\ c_2=\frac{244}{353}-\frac{16
	}{343}i, \ c_3=\frac{43}{85}-\frac{254 }{335}i$ (with $b_1, b_2, b_3\in\DD$), then the function 
	$$
	f(z)=\sum_{j=1}^3\frac{c_j}{(1-z\bar{b_j})^2}
	$$
	has three zeros inside $\DD$ that are approximately
	$
	z_1= -0.837508  + 0.3451006  \ i 
	$, 
	$
	z_2=		0.1723709  - 0.832953  \ i 
	$
	and
	$
	z_3=		0.466866  + 0.855772 \ i.
	$ 
	Therefore the columns of the matrix $\left( \frac{1}{(1-z_i\overline{b_j} )^2} \right)_{i,j=1}^3$ are linearly dependent and  its determinant vanishes.
	
	A different process to obtain exact complex numbers $a_i$, $b_j\in \DD$, $i,j=1,2, 3$ such that $\det\left( \frac{1}{(1-a_i\overline{b_j} )^2} \right)_{i,j=1}^3=0$ is the following. With the notation of the previous example we may approximate $z_1$ with $a_1=-\frac{67}{80} + \frac{88 i}{255}$
	and $z_2$ with $a_2=\frac{101}{586}-\frac{369 i}{443}$, $a_1, a_2\in\DD$. Then consider $a_3=z$ and define the polynomial $p(z)$ consisting of the numerator of the map $\frac{1}{(z-a_1)(z-a_2)}\det\left( \frac{1}{(1-a_i\overline{b_j} )^2} \right)_{i,j=1}^3$. Since $z=a_1$ and $z=a_2$ are roots of $p$ it can be proved that the $p(z)$ has degree $2$. Then two roots of $p$ can be found explicitly. One of them belongs to $\DD$. Then if we chose this root as $a_3$ we obtain $b_1, b_2, b_3$ and $a_1, a_2, a_3$ in the disk $\DD$ such that $\det\left( \frac{1}{(1-a_i\overline{b_j} )^2} \right)_{i,j=1}^3=0$.
\end{ejem}
We have also found numerical examples for $n=4, \dots, 8$ such that 
$\det\left( \frac{1}{(1-a_i\overline{b_j} )^2} \right)_{i,j=1}^n=0$.

\medskip

\noindent \textbf{Segal-Bargmann space.} Let $H(\CC)$ be the space of holomorphic functions in $\CC$. Our next example is given by the Bargmann space, or Segal-Bargmann space, which is defined as
$$
\mathcal{F}^1=\left\{  \,  f \in H(\CC) \, : \,  \int_\CC |f(z)|^2 e^{-|z|^2} dz < \infty  \, \right\}.
$$  
It has a reproducing kernel, $k^S_w(z)=e^{z\bar{w}}$, $z,w \in \CC$. Now condition  (\ref{det})  does not always hold in this space. 
An easy necessary and sufficient condition can be established for the case of two zeros. Take the sets $\aa=\{a_1,\, a_2\} \subseteq \CC$, and  $\bb=\{b_1,\, b_2\} \subseteq \CC$ with $\aa \cap \bb=\emptyset$. There is a unique geodesic joining $\z_\aa$ and $\z_\bb$ if and only if
$$
\det \left( \begin{array}{cc} e^{a_1\bar{b}_1} & e^{a_1\bar{b}_2} \\ e^{a_2\bar{b}_1} & e^{a_2\bar{b}_2}\end{array} \right)=e^{a_1\bar{b}_1 + a_2\bar{b}_2} - e^{a_1\bar{b}_2+ a_2\bar{b}_1} \ne 0,
$$
which can be rewritten as
$$
e^{a_1 \bar{b}_1 + a_2 \bar{b}_2-(a_1 \bar{b}_2 + a_2 \bar{b}_1)} \neq 1.
$$
This, in turn, is equivalent to the condition:
$$
(a_1-a_2)(\bar{b}_1-\bar{b}_2)\neq 2 k \pi i, \, \, \, k \in \ZZ.
$$
When this last condition does not hold, there are infinitely many geodesics in $\mathcal{F}^1$ joining $\z_\aa$ and $\z_\bb$.


\medskip


\section{Singletons}\label{singletons}

Let $\h$ be a reproducing kernel Hilbert space consisting of functions on a set $X$. 
Throughout this section, we assume that the reproducing kernel satisfies the following conditions:
\begin{itemize}
\item[i)] $k_a$ is not the zero function, for any $a \in X$;
\item[ii)] the set $\{ k_a \, , k_b \}$ is linearly independent  if $a\neq b$.
\end{itemize}

In our next remark we recall three important metrics in the context of reproducing kernel Hilbert spaces.
We follow the exposition in  \cite{ARSW11}. We omit  the proofs, the details can be found in this work and the references therein.

\begin{rem}\label{3 metrics}
By the first condition above, we can normalize the functions $k_a$. On the other hand, the three metrics below turn into 
pseudo-metrics if and only if the second condition is not assumed. This follows  straightforward using the equality case in the Cauchy-Schwartz inequality.  

\medskip

\noindent 1.  
The first metric is given by
$$
\delta(a,b)=\delta_\h(a,b):=\sqrt{1- \left(\frac{|\PI{k_a}{k_b}|}{\|k_a\|\|k_b\|}\right)^2}.
$$
See \cite{AMc02} for a proof of the triangle inequality. It can be interpreted as a measure between points in $X$, which takes into account properties of $\h$. For instance, 
the following relation holds:
$$
\delta(a,b)=\frac{\sup\{ |f(b)| : f \in \h, \, \|f\|=1, \, f(a)=0 \}  }{\sup\{ |f(b)| : f \in \h, \, \|f\|=1 \}  }.
$$

Let $P_a$ be the orthogonal projection onto the subspace generated by $k_a$. The metric $\delta$ is also useful to provide Lipschitz estimates of the Berenzin transform. For this purpose in \cite{C07}, the following characterization of $\delta$ was proved:
\begin{equation}\label{eq with norm}
\delta(a,b)=\|P_a - P_b \|=2^{-1/p}\|P_a - P_b \|_p, \, \, \, p\geq 1 .
\end{equation}
Here $\| \,\, \|_p$ denote the Schatten $p$-norms. Indeed, this was first showed for the operator and trace norms \cite{C07}, and later observed for the other $p$-norms \cite{ARSW11}.
In addition, $\delta$ might be viewed as a generalization of the pseudo-hyperbolic metric for arbitrary reproducing kernel Hilbert spaces. Recall that the pseudo-hyperbolic metric is defined by 
$$
\rho(a,b)=\left| \frac{a-b}{1-\bar{a}b}	\right|, \, \, \, a, b \in \DD.
$$
It is well-known that for the Hardy space $H^2$ we have
$$
\delta_{H^2}(a,b)=\rho(a,b).
$$		
\noindent 2. The Skwarcy\'nski metric was first considered  for the Bergman kernel on a domain \cite{MPS85}. It can be extended to an arbitrary reproducing kernel Hilbert space as
$$
\hat{\delta}(a,b)=\hat{\delta}_\h(a,b):=\sqrt{1- \frac{|\PI{k_a}{k_b}|}{\|k_a\|\|k_b\|}}, \, \, \, a,b \in X.
$$
 It can be realized  as a multiple of a  quotient metric as follows. Let $P(\h)$ the projective space over $\h$, i.e. $P(\h)\simeq S(\h)/ \sim$ , where $S(\h)$ denotes the unit sphere of $\h$ and $f\sim g$ if $f=\lambda g$, for some $\lambda \in \TT$. Then the Skwarcy\'nski distance between $a$ and $b$ can be computed as a multiple of the quotient distance between the classes $\left[\frac{k_a}{\|k_a\|}\right]$ and $\left[\frac{k_b}{\|k_b\|}\right]$,  that is,
$$
\hat{\delta}(a,b)=\frac{1}{\sqrt{2}}\inf \left\{  \, \left\| \frac{k_a}{\|k_a\|} - \lambda \frac{k_b}{\|k_b\|}  \right\| \, : \, \lambda \in \TT  \, \right\}.
$$ 

\noindent 3. Our third metric is usually known as the Kobayashi metric \cite{K59}.  In order to define it, recall that the tangent space $(TP(\h))_{[f]}$ of the projective space at $[f]$ is $(TP(\h))_{[f]}=(TS(\h))_{f}/\sim$, where $v \sim  w$ if $v-w=i a f$, $a \in \R$ and  the tangent space to sphere is given by $(TS(\h))_{f}=\{ v \in \h : \Re\PI{v}{f}=0 \}$. The following Riemannian metric is the infinite dimensional version of the Fubini-Study metric: for $[v] \in (TP(\h))_{[f]}$, 
$$
\| [v] \|_{[f]}=\mathrm{dist}(v,i\R f)=(\|v\|^2    -  |\PI{v}{f}|^2)^{1/2}.
$$
Given a piecewise smooth curve $\gamma:[0,1]\to P(\h)$, its  length  is then measured by
$$
L(\gamma)=\int_0^1 \left( \left\|\dot{\Gamma}(t)\right\|^2 - \left|\PI{\dot{\Gamma}(t)}{\Gamma(t)}\right|^2\right)^{1/2} dt.
$$
Here $\Gamma \subseteq  S(\h)$ is any piecewise smooth lift of $\gamma$, i.e. $[\Gamma(t)]=\gamma(t)$ for all $t \in [0,1]$.
The Kobayashi metric is  defined by using the corresponding geodesic distance:
$$
\check{\delta}(a,b):=\inf \left\{ \, L(\gamma) \,  : \, \gamma \text{ piecewise smooth joining }  \left[\frac{k_a}{\|k_a\|}\right] \text{ and } \left[\frac{k_b}{\|k_b\|}\right]\,  \right\}.
$$
\end{rem}

\medskip

Let us consider the  case when  the sets $\aa$ and $\bb$ consist of single terms $a$ and $b$ in $X$. We denote 
by $\z_a$ and $\z_b$ the corresponding subspaces of functions in $\h$ that vanish in the points $a$ and $b$, respectively.   
We have defined the length of curves by using a Finsler norm given by the operator norm. 
Recall that if $\gamma:[0,1] \to \gr$ is a piecewise smooth curve, then its length is measured by
$$
L(\gamma)=\int_0^1 \| \dot{\gamma}(t)\| dt.
$$
Thus, we have a geodesic distance $d(\s,\t)$ defined as 
the infimum of all the length of piecewise smooth curves in $\gr$ joining the subspaces $\s$ and $\t$. In particular, this allows us to introduce 
another metric in $X$:
$$
\Gamma(a,b)=\Gamma_\h(a,b):=d(\z_a, \z_b), \, \, \, a, b \in X.
$$
The relation of this metric with the three metrics of the previous remark is as follows. Notice that the second item gives another
proof in geometric terms of the above relation (\ref{eq with norm}).

\begin{teo}\label{coincidencia} 
Let $\h$ be a reproducing kernel Hilbert space of function on a set $X$. The following assertions hold: 
\begin{enumerate}
\item[1.] $\check{\delta}=\Gamma$;
\item[2.] $\delta(a,b)=\sin(\Gamma(a,b))=\|  P_{\z_\aa} - P_{\z_\bb}  \|=2^{-1/p}\|P_{\z_\aa} - P_{\z_\bb} \|_p$, $p\geq 1$;
\item[3.] $ \hat{\delta}(a,b)=\sqrt2 \ \sin\left(\frac12 \Gamma(a,b)\right).$
\end{enumerate}
\end{teo}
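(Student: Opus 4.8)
The plan is to reduce all three identities to a single two-dimensional computation, exploiting that each $\z_a$ is the orthogonal complement of the line spanned by the reproducing kernel $k_a$. First I would record this reduction. By hypothesis (i) $k_a\neq 0$, so $\z_a=\{\,f\in\h:\PI{f}{k_a}=0\,\}=(\CC k_a)^\perp$ and therefore $P_{\z_a}=I-P_a$, where $P_a$ is the projection onto $\CC k_a$. The map $P\mapsto I-P$ sends a curve $P(t)$ to $I-P(t)$, which has the same velocity up to sign; hence it preserves the Finsler length of curves and is an isometry of $(\gr,d)$, and it also preserves every Schatten norm of a difference of projections. Consequently
$$
\Gamma(a,b)=d(\z_a,\z_b)=d(P_a,P_b),\qquad \|P_{\z_a}-P_{\z_b}\|_p=\|P_a-P_b\|_p\quad(1\le p\le\infty),
$$
so the theorem is reduced to the minimal geodesic between the two rank-one projections $P_a$ and $P_b$.

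Second, I would analyse that geodesic explicitly. Put $u=k_a/\|k_a\|$ and $v=k_b/\|k_b\|$; after multiplying $v$ by a unimodular scalar we may assume $\PI{u}{v}=\cos\theta$ with $\theta=\arccos\big(|\PI{k_a}{k_b}|/(\|k_a\|\|k_b\|)\big)\in[0,\pi/2]$, and $\theta>0$ when $a\neq b$ by hypothesis (ii). Writing $v=\cos\theta\,u+\sin\theta\,w$ with $w$ a unit vector orthogonal to $u$, the operator given by $Xg=i\theta(\PI{g}{w}u-\PI{g}{u}w)$ is self-adjoint, vanishes off $\mathrm{span}\{u,w\}$, satisfies $\|X\|=\theta\le\pi/2$, is co-diagonal with respect to both $P_a$ and $P_b$, and $e^{iX}$ carries $\CC u$ onto $\CC v$, so $e^{iX}P_ae^{-iX}=P_b$. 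By Remark \ref{geometria de subespacios}(1) this $X$ is the geodesic exponent, whence $\Gamma(a,b)=d(P_a,P_b)=\|X\|=\theta$. A $2\times2$ computation shows that $P_a-P_b$ has eigenvalues $\pm\sin\theta$ and is zero on $\mathrm{span}\{u,w\}^\perp$, so $\|P_a-P_b\|=\sin\theta$ and $\|P_a-P_b\|_p=2^{1/p}\sin\theta$. Together with the reduction and the definition of $\delta$ this gives item 2 in full, and in particular reproves \eqref{eq with norm} in geometric terms; item 3 follows at once from $\hat\delta(a,b)=\sqrt{1-\cos\theta}$ and the half-angle identity $\sqrt{1-\cos\theta}=\sqrt2\,\sin(\theta/2)$.

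Third, for item 1 I would show that the map $\pi\colon P(\h)\to\gr$, $[f]\mapsto P_{[f]}$, onto the rank-one projections preserves lengths, so that the Kobayashi (Fubini--Study) distance equals the Grassmann distance. Choosing a horizontal lift $f(t)$ of a curve, so that $\PI{\dot f}{f}=0$ and the Fubini--Study norm of its tangent equals $\|\dot f\|$, and differentiating $P_{[f]}g=\PI{g}{f}f$, one finds $\tfrac{d}{dt}P_{[f]}\,g=\PI{g}{f}\dot f+\PI{g}{\dot f}f$, an operator of norm $\|\dot f\|$. Hence $\pi$ sends each curve of $P(\h)$ to a curve of $\gr$ of equal length, giving $d(P_a,P_b)\le\check\delta(a,b)$. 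For the reverse inequality I would use that the minimal geodesic $e^{itX}P_ae^{-itX}$ between $P_a$ and $P_b$ consists of rank-one projections (unitary conjugates of $P_a$), hence equals $\pi$ of a curve in $P(\h)$ of the same length, so $\check\delta(a,b)\le d(P_a,P_b)$; equality yields $\check\delta=\Gamma$.

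I expect item 1 to be the main obstacle. Two points require care: that $\pi$ preserves tangent norms, for which the horizontal normalisation $\PI{\dot f}{f}=0$ is essential since the Fubini--Study metric quotients out the direction $if$; and that the infimum defining $d(P_a,P_b)$, taken over all curves in $\gr$ rather than only rank-one ones, is attained by a curve remaining among the rank-one projections. This last point is exactly what the explicit form of the minimal geodesic provides, and it guarantees that enlarging the class of competitor curves does not shorten the distance.
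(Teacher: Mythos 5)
Your proof is correct, and while the central two--dimensional computation is the same one the paper performs (an orthonormal basis adapted to $k_a,k_b$, the $2\times 2$ matrices of the two projections, and the resulting formula $\Gamma(a,b)=\arccos\bigl(|\PI{k_a}{k_b}|/(\|k_a\|\|k_b\|)\bigr)$), you justify items 1 and 2 by a genuinely different and more self-contained route. For item 1 the paper reduces to a normal form by unitary invariance and then \emph{cites} Kobayashi's computation $\check{\delta}(a,b)=\arctan(c)$, matching it against $\Gamma(a,b)=\arccos\bigl((c^2+1)^{-1/2}\bigr)$; you instead prove directly that the map $P(\h)\to\gr$, $[f]\mapsto f\otimes f$, preserves lengths (via horizontal lifts with $\PI{\dot f}{f}=0$) and that the minimal Grassmann geodesic stays inside the rank-one stratum, so the two geodesic distances coincide without any external formula. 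Likewise, for the identities $\|P_{\z_a}-P_{\z_b}\|=\sin\Gamma(a,b)$ and $\|P_{\z_a}-P_{\z_b}\|_p=2^{1/p}\sin\Gamma(a,b)$ the paper appeals to the known relation $\sin(d(P,Q))=\|P-Q\|$ from the survey literature, whereas you compute the eigenvalues $\pm\sin\theta$ of $P_a-P_b$ on $\mathrm{span}\{u,w\}$ directly. Your reduction through the complementary rank-one projections $P_a=I-P_{\z_a}$ and the isometry $P\mapsto I-P$ is a clean way to package what the paper does by restricting to the generic part $\h_0$. The paper's version is shorter by citation; yours makes explicit the geometric mechanism (an isometric embedding of the projective space into the Grassmannian) that the paper only alludes to when it says the theorem ``gives another proof in geometric terms'' of the norm identity. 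The only point worth polishing is the endpoint case $\theta=\pi/2$ (i.e.\ $k_a(b)=0$): there $\|P_a-P_b\|=1$, so uniqueness of the exponent via Remark 2.1(1) is not available and there are infinitely many geodesics; your operator $X$ still yields a minimal geodesic of length $\pi/2$ by the general minimality of geodesics with $\|X\|\le\pi/2$, so the formula for $\Gamma$ persists, exactly as the paper handles this case separately.
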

\begin{proof}
We compute $\Gamma$ first in the case where $\z_a\cap \z_b^\perp=\z_a^\perp \cap \z_b=\{ 0 \}$. Then $\h_0$, the generic part where $P_{\z_a}$ and $P_{\z_b}$ act non-trivially, is a $2$-dimensional space generated by $k_a$ and $k_b$. Let us compute $X_{a,b}$ the exponent of the unique minimal geodesic joining these subspaces. Clearly $X_{a,b}=0$ in $\h_{11}$. Consider the orthonormal basis 
$\{e_1,e_2\}$ of $\h_0$, given by $e_1=\frac{1}{\|k_a\|}k_a$ and $e_2=\frac{1}{\|h\|}h$,
where
$$
h=k_b-\frac{\langle k_b,k_a \rangle}{\|k_a\|^2} k_a.
$$
Then elementary computations show that in this basis, 
$$
P_{\z_{a}}|_{\h_0}=\left( \begin{array}{cc} 0 & 0 \\ 0 & 1 \end{array}\right) \ \hbox{ and } \ 
P_{\z_{b}}|_{\h_0}=\left( \begin{array}{cc} 1-|\gamma|^2 & (1-|\gamma|^2)^{1/2}\gamma \\ 
(1-|\gamma|^2)^{1/2}\bar{\gamma} & |\gamma|^2 \end{array}\right)
$$
where 
$$
\gamma=\frac{\langle k_a,k_b \rangle}{\|k_a\|\|k_b\|}.
$$
Let $\gamma=e^{i\theta}\cos(x)$, where $\cos(x)=|\gamma|$ for $0<x<\pi/2$, and consider the unitary matrix
$$
U_\theta=\left( \begin{array}{cc} e^{i\theta/2} & 0 \\ 0 & e^{-i\theta/2} \end{array} \right).
$$
Then 
$$
P_{\z_{a}}|_{\h_0}=U_\theta P_{\z_{a}}|_{\h_0} U_{-\theta} \ \hbox{ and } \ P_{\z_{b}}|_{\h_0}=
U_\theta \left( \begin{array}{cc} \cos(x)^2 & \cos(x)\sin(x) \\ \cos(x)\sin(x) & \sin(x)^2 \end{array} \right) U_{-\theta}.
$$
Therefore, the  co-diagonal, selfadjoint matrix (of norm less than $\pi/2$) which is the exponent of the unique geodesic joining these projections is
$$
X_{a,b}=U_\theta \left( \begin{array}{cc} 0 & -i x \\ ix & 0 \end{array} \right) U_{-\theta}= \left( \begin{array}{cc} 0 & -i x e^{i\theta} \\ i x e^{-i\theta} & 0 \end{array} \right) .
$$
Thus, the geodesic distance between $\z_a$ and $\z_b$ is 
$$
d(\z_a,\z_b)=\|X_{a,b}\|=x=\arccos \left(\frac{|\langle k_a,k_b \rangle|}{\|k_a\|\|k_b\|}\right).
$$
In the case where $\z_a \cap \z_b^\perp\neq \{ 0\}$ or  $\z_b \cap \z_a^\perp\neq \{ 0\}$, note that $\z_b^\perp \subseteq \z_a $ and 
$\z_a^\perp \subseteq \z_b$, or equivalently $k_a(b)=\PI{k_a}{k_b}=0$. Then $\dim \z_a \cap \z_b^\perp= \dim \z_b \cap \z_a^\perp=1$, so there are infinitely many geodesics joining
 $\z_a$ and $\z_b$, and the geodesic distance equals $\pi /2$. In both cases, we thus  obtain 
\begin{equation}\label{expres of Gamma}
\Gamma(a,b)=\arccos \left(\frac{|\langle k_a,k_b \rangle|}{\|k_a\|\|k_b\|}\right), \, \, \, \, a,b \in X.
\end{equation}
In order to show that $\check{\delta}=\Gamma$, we observe that the Fubini-Study metric is invariant under the action of the unitary group.
Also identyfing the Hilbert space $\h\simeq \ell^2$, we may assume that 
$$
\frac{k_a}{\|k_a\|}=(1,0,0, \ldots );  \, \, \, \, \, \frac{k_b}{\|k_b\|}=\left( \frac{1}{(c^2 + 1)^{1/2}}, \frac{c}{(c^2 + 1)^{1/2}} ,0, \ldots \right), \, \, \, 0\leq c \leq \infty.
$$
This fact was observed in \cite{K59}, where it is also computed that $\check{\delta}(a,b)=\arctan(c)$. Our Finsler metric is also unitarily invariant, thus we can also assume that the above unit vectors have that form, so that $\Gamma(a,b)=\arccos(c^2+1)^{-1/2}$. This gives $\check{\delta}=\Gamma$.

From the above expression of $\Gamma$ given in (\ref{expres of Gamma}), it follows immediately that $\delta(a,b)=\sin(\Gamma(a,b))$. 
It is known that (see for instance the survey article \cite{timisoara}), for any given pair of projections that can be joined by a geodesic curve of the Grassmann manifold, it holds that
$$
\sin(d(P,Q))=\|P-Q\|. 
$$
This also can be generalized to $p$-norms by similar arguments.
   Finally notice that $\hat{\delta}(a,b)= (1- \cos(\Gamma(a,b)))^{1/2}$, which implies that $ \hat{\delta}(a,b)=\sqrt2 \ \sin\left(\frac12 \Gamma(a,b)\right)$.
\end{proof}
In the previous proof we have seen that
\begin{equation*}
\Gamma(a,b)=\| X_{a,b}\|=\arccos \left(\frac{|\langle k_a,k_b \rangle|}{\|k_a\|\|k_b\|}\right), \, \, \, \, a,b \in X.
\end{equation*} 
\begin{ejems}\label{Gamma examples}
For instance $\Gamma$ can be computed in the following spaces:
\begin{enumerate}
\item[1.] Hardy space: $\Gamma_{H^2}(a,b)=\arcsin\left(\left|\frac{a-b}{1-\bar{a}b}\right|\right)=\arcsin(\rho(a,b))$, $a,b \in \DD$
\item[2.] Bergman space: $\Gamma_{A_2}(a,b)=2 \arcsin\left(2^{-1/2}\left|\frac{a-b}{1-\bar{a}b}\right|\right)=2\arcsin(2^{-1/2}\rho(a,b))$, $a,b \in \DD$.
\item[3.] Bargmann-Segal space: $\Gamma_{\mathcal{F}^1}(a,b)=\arccos(e^{-\frac{1}{2}|a-b|^2})$,  $a,b \in \CC$.
\end{enumerate}
\end{ejems}

Points $a\in X$ can be regarded as subspaces $\z_a\in \gr$. Clearly, the map $a\mapsto\z_a$ is one to one. This follows from our assumption at the beginning of this section that the set $\{ k_a \, , k_b \}$ is linearly independent  if $a\neq b$. Furthermore, the map
$$
(X,\Gamma) \ni a \mapsto P_{\z_a}\in \grp:=\{\z_a: a\in X \}
$$
is a homeomorphism. This follows from the relations in Theorem \ref{coincidencia}. Note that we may also endow $X$ with any other of the metrics
$\delta$, $\hat{\delta}$ or $\check{\delta}$, they all give the same topology on $X$. 

We now investigate when 
$
\grp=\{\z_a: a\in X \}
$ 
is a closed subset of $\gr$. Since  $\gr$ is closed in $\b(\h)$, $\grp$ is closed in $\gr$ if and only if 
$\grp$ is closed in $\b(\h)$. More interesting, we have that $\grp$ is closed in $\gr$ if and only if 
the metric space $(X, \Gamma)$ is complete. This again follows immediately from the relation  between $\Gamma$ and the operator norm established
in Theorem \ref{coincidencia}.

For our purpose, we shall need that the space $\h$ has the following property.
\begin{hyp}\label{hipo}
Assume that $X$ is a subset of  $Y=\R^n$ or $Y=\CC^n$. Denote by $\hat{Y}=Y \cup \{ \infty\}$ the one-point compactification of $Y$.
Let $\partial X$ be the boundary of $X$ in $\hat{Y}$. We suppose that the following hold:  if $(w_n)$ is a sequence  in $X$ such that 
$w_n \to w \in \partial X\setminus X$, then
\[ 
\lim_{n \to \infty} \frac{k_{w_n}(z)}{\|k_{w_n}\|}=0, \, \, \, \forall \, z \in X.
\]
\end{hyp}

\noindent We now give a useful  sufficient condition and examples regarding this hypothesis.

\begin{rem}\label{suff cond}
\noindent Clearly, the above hypothesis  is satisfied if the following hold:
\begin{enumerate}
\item[1.] If $(w_n)$ is a sequence  in $X$ such that 
$w_n \to w \in \partial X \setminus X$, then $\|k_{w_n}\| \to \infty$. 
\item[2.] For every $z \in X$, there exists a constant $C_z>0$ such that $|k_w(z)|\leq C_z$ for all $w \in X$. 
\end{enumerate}
In the case of $X$ being a bounded domain of $\CC^n$, condition 1. dates back to the work of Brenermann \cite{B55}. Under this assumption $X$ turns out to be complete with respect to the Bergmann metric $\delta_{B}$ defined by means of the Bergman kernel. Although we shall not present the Bergmann metric here, we observe that
$\delta_{B}=2 \check{\delta}$ (see \cite{K59, MPS85}, or more generally, \cite[Prop. 9]{ARSW11}), and then by Theorem \ref{coincidencia}, $\delta_{B}=2\Gamma$. Thus, condition 1. implies that $(X,\Gamma)$ is a complete metric space, or equivalently $\grp$ is closed in $\gr$, when $X$ is a bounded domain of $\CC^n$ and $\h$ is the Bergmann space associated to it.    
\end{rem}

\begin{ejems}
$1.$ The Hardy and Bergman spaces of the disk clearly satisfy both conditions in Remark \ref{suff cond}. 
A generalization of the Hardy space, which also satisfies  these conditions, is the Drury-Arveson  space $D_n$. It  consists in all the holomorphic functions on $\mathbb{B}_n$, the unit ball of $\CC^n$, $n\geq 1$, equipped with the reproducing kernel 
$$
k^{D_n}(z,w)=\frac{1}{1-\sum_{j=1}^n z_j \bar{w}_j }\, , 
$$ 
where $z=(z_1, \ldots, z_n)\in \mathbb{B}_n$, $w=(w_1, \ldots, w_n) \in \mathbb{B}_n$. 

\medskip

\noindent $2.$ Notice that conditions 1. and 2. are independent of each other. There are examples in which only one of them holds. 
Given $\beta=(\beta_n)$ a sequence of positive real numbers, set $
R=\lim \inf \beta_n^{-1/n}$. The weighted Hardy space $H^2_\beta$ consists of  all the analytic functions in the disk $B_R(0)$ such that 
$$
 \|f\|_\beta:=\sum_{n=0}^\infty \beta_n^2|a_n|^2< \infty, 
$$
whenever $f(z)=\sum_{n=0}^\infty  a_n z^n$ on $B_R(0)$. 
For details we refer to \cite{paulsen}. Special choices of the weights $(\beta_n)$ give the following 
spaces of analytic functions treated in this work: $\beta_n=1$ (usual Hardy space), $\beta_n=\frac{1}{\sqrt{n+1}}$ (Bergman space) and $\beta_n=\sqrt{n!}$ (Bargmann-Segal space). The reproducing kernel of $H_\beta^2$ is given by
$$
k_w(z)=\sum_{n=0}^\infty \frac{\bar{w}^nz^n}{\beta_n^2}, \, \, \, \, \, \, \, w,z \in B_R(0).
$$
Now we can show that condition 1. does not necessary hold true. Take $\beta_n=n+1$, then $R=1$, and for $a \in \DD$,
\begin{equation}\label{pi 6}
\|k_a\|^2=k_a(a)=\sum_{n=0}^\infty \left(\frac{|a|}{n+1}\right)^2 \leq \sum_{n=0}^\infty \frac{1}{(n+1)^2} =\frac{\pi^2}{6}.
\end{equation}
This implies that $\|k_a\| \not\to \infty$ as $|a| \to 1$. However, estimating in a similar way, we can find that condition 2. holds in 
this space. 

On the other hand, the Bargmann-Segal space satisfies condition 1. since $\|k_w\|^2=e^{|w|^2}\to \infty$ as $w \to \infty$. But it does not satisfy condition 1. For fix $z \in \CC$, $|z|>1$, and take the sequence $w_n=z|z|^n$, then $k_{w_n}(z)=e^{|z|^{n+2}}\to \infty$ as $n \to \infty$.   

\medskip

\noindent 3. Notice that the above Hardy space with  weight $\beta_n=n+1$ provides an example in which Hypothesis \ref{hipo} does not hold true. Take $z=0$ and $(w_n)$ a sequence in the unit disk such that $w_n \to 1$. Then using the estimate in (\ref{pi 6}), we find that 
$$
\frac{k_{w_n}(0)}{\|k_{w_n}\|}=\frac{1}{\|k_{w_n}\|}\geq \frac{\sqrt{6}}{\pi}.
$$
Finally we observe that Hypothesis \ref{hipo} is more general than the conditions in Remark \ref{suff cond}. For instance, in the Segal-Bargmann space
we have that Hypothesis \ref{hipo} holds true:  
$\lim_{w \to \infty} e^{z\bar{w}-|w|^2}=0$ for every $z \in \CC$. 
Another example where condition 1. does not hold, but Hypothesis \ref{hipo} can be verified is the Sobolev space in $\R$, which consists of absolutely continuous functions $f$ such that $f, f'  \in L^2(\R)$. It has the inner product
$$
\PI{f}{g}=\int_\R f(x)\overline{g(x)}dx + \int_\R f'(x)\overline{g'(x)}dx;
$$ 
and the reproducing kernel $k(z,w)=e^{-|z-w|}$, $z,w \in \R$.
\end{ejems}


\begin{prop}\label{closed}
Let $\h$ is a reproducing kernel Hilbert space of functions on a set $X$ which satisfies Hypothesis \ref{hipo}. Assume that 
the kernel $k:X\times X \to \CC$ is a continuous function. Then $\grp$ is closed in $\gr$.
\end{prop}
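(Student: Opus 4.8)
The plan is to prove closedness directly. I take a sequence $(a_n)\subseteq X$ for which the projections $P_{\z_{a_n}}$ converge in operator norm to some $P\in\gr$, and I aim to produce a point $a\in X$ with $P=P_{\z_a}$. (Equivalently, in view of the equivalence recorded just before the statement, one could instead show that $(X,\Gamma)$ is complete; the argument is identical.) The opening move is to exploit the compactness built into Hypothesis \ref{hipo}: since $X\subseteq Y\subseteq\hat Y$ and $\hat Y$ is compact, the sequence $(a_n)$ admits a subsequence $(a_{n_k})$ with $a_{n_k}\to w$ for some $w\in\hat Y$. The whole proof then splits according to whether $w\in X$ or $w\notin X$.

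In the case $w\in X$, I would invoke the continuity of the kernel together with the explicit formula (\ref{expres of Gamma}) for $\Gamma$. Writing $\langle k_{a_{n_k}},k_w\rangle=k_{a_{n_k}}(w)$ and $\|k_{a_{n_k}}\|^2=k_{a_{n_k}}(a_{n_k})$ in terms of $k$, continuity gives $k_{a_{n_k}}(w)\to k_w(w)$ and $\|k_{a_{n_k}}\|^2\to\|k_w\|^2$, so that $|\langle k_{a_{n_k}},k_w\rangle|/(\|k_{a_{n_k}}\|\,\|k_w\|)\to 1$ and hence $\Gamma(a_{n_k},w)\to 0$. By Theorem \ref{coincidencia}(2) this means $\|P_{\z_{a_{n_k}}}-P_{\z_w}\|=\sin\Gamma(a_{n_k},w)\to 0$, and uniqueness of norm limits forces $P=P_{\z_w}\in\grp$, as desired.

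In the case $w\notin X$, I claim a contradiction, so that this case never arises. Since $w$ is a limit of points of $X$ but lies outside $X$, it belongs to $\overline X\setminus X$, which is contained in $\partial X\setminus X$ (possibly $w=\infty$ when $X$ is unbounded); thus Hypothesis \ref{hipo} applies to $(a_{n_k})$ and gives $k_{a_{n_k}}(z)/\|k_{a_{n_k}}\|\to 0$ for every $z\in X$. Setting $e_{n_k}=k_{a_{n_k}}/\|k_{a_{n_k}}\|$, this reads $\langle e_{n_k},k_z\rangle\to 0$ for all $z$; since $\{k_z:z\in X\}$ spans a dense subspace of $\h$ and $\|e_{n_k}\|=1$, a routine $\varepsilon$-approximation upgrades this to weak convergence $e_{n_k}\rightharpoonup 0$. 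Consequently the rank-one projections $P_{a_{n_k}}=\langle\,\cdot\,,e_{n_k}\rangle e_{n_k}$ satisfy $\langle P_{a_{n_k}}f,g\rangle=\langle f,e_{n_k}\rangle\langle e_{n_k},g\rangle\to 0$, i.e. $P_{a_{n_k}}\to 0$ in the weak operator topology. On the other hand $P_{a_{n_k}}=I-P_{\z_{a_{n_k}}}$ converges in operator norm to $I-P$, whence $I-P=0$; but then $1=\|P_{a_{n_k}}\|\to\|I-P\|=0$, which is absurd. Hence $w\in X$ in all cases, and the previous paragraph concludes that $P\in\grp$, proving that $\grp$ is closed.

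The step I expect to be the crux is the boundary case: one must first verify that any subsequential limit falling outside $X$ is genuinely covered by Hypothesis \ref{hipo} (namely that it lands in $\partial X\setminus X$, including the point at infinity), and then convert the merely pointwise decay of the normalized kernels into honest weak convergence. This conversion is precisely where the density of the reproducing kernels in $\h$ and the uniform bound $\|e_{n_k}\|=1$ are both indispensable; the continuity assumption on $k$, by contrast, is only used in the routine case $w\in X$.
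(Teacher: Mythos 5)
Your proof is correct and follows essentially the same route as the paper's: extract a subsequence of $(a_n)$ convergent in the one-point compactification, use Hypothesis \ref{hipo} to rule out a limit in $\partial X\setminus X$, and use continuity of the kernel when the limit lies in $X$. The only (cosmetic) differences are that the paper first identifies $I-P$ as a rank-one projection $f\otimes f$ and derives the boundary-case contradiction from $|f(z)|=\lim |k_{a_n}(z)|/\|k_{a_n}\|=0$ for all $z$ (the same fact your weak-convergence argument encodes), and handles the interior case by showing $\ell_{a_n}\to\ell_a$ in norm rather than via the formula for $\Gamma$ and Theorem \ref{coincidencia}.
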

\begin{proof}
At the beginning  of this section we assume that $k_a$ is not the zero function for any $a \in X$. Denote by $\ell_a$ the normalized kernels, $\ell_a=\frac{1}{\|k_a\|} k_a$. Then the orthogonal projection $P_{\z_a}$ onto $\z_a$ is given by $P_{\z_a}=1-\ell_a\otimes\ell_a$, where as is usual notation, $f\otimes g$ denotes the rank one operator $h\mapsto \langle h, g\rangle f$ ($f,g,h\in\h$). Let $a_n\in X$ and suppose that $P_{\z_{a_n}}\to P$ in the norm topology of $\b(\h)$ (which is the topology in $\gr$). We must show that $P=1-\ell_b\otimes \ell_b$ for some $b\in\DD$. First note that the projections $\ell_{a_n}\otimes\ell_{a_n}$ converge to a rank one projection. Indeed, let $f\in R(1-P)$ with $\|f\|=1$, and suppose there exists $g\in R(1-P)$ such that $g\perp f$. Since $\langle f,\ell_{a_n}\rangle \ell_{a_n}=(\ell_{a_n}\otimes\ell_{a_n})(f)\to f$, it follows, on one hand, that
$$
|\langle f,\ell_{a_n}\rangle|^2=\langle \ell_{a_n}\otimes\ell_{a_n}(f),f\rangle\to \langle f,f\rangle =1,
$$
i.e. $|\langle f,\ell_{a_n}\rangle|\to 1$. On the other hand,
$$
\langle f,\ell_{a_n}\rangle \langle \ell_{a_n}, g\rangle \to \langle f,g\rangle,
$$
and therefore $\langle g,\ell_{a_n}\rangle\to 0$, and $\ell_{a_n}\otimes\ell_{a_n}(g)\to (1-P)(g)=0$. That is, $\ell_{a_n}\otimes\ell_{a_n}\to f\otimes f$. 
Now recall that $X$ is contained in $Y=\R^n$ or $Y=\CC^n$. Then  $(a_n)$ has a subsequence (still denoted by $a_n$) which converges to some element  $a \in \hat{Y}$, where $\hat{Y}$ is the one-point compactification of $Y$.   Suppose that $a \in \partial X \setminus X$, then for every $z \in X$, we have
\begin{align*}
|f(z)| = \| (f\otimes f )(k_z)\| = \lim_{n\to \infty} \| (\ell_{a_n} \otimes \ell_{a_n})(k_z)  \| 
   =\lim_{n\to \infty} \frac{|k_{a_n}(z)|}{\| k_{a_n}\|}=0 
\end{align*}
It follows that $f=0$, which is a contradiction. Thus we must have $a \in X$. Using that the kernel is continuous, we obtain that $\ell_{a_n}\to\ell_a$, which implies that $f=\ell_a$.
\end{proof}

\begin{rem}
Notice that Hypothesis \ref{hipo} holds  when $X$ is closed in $\hat{Y}$. On the other hand, we observe that if the kernel is continuous as we have assumed, then all the functions in $\h$ must be continuous (see \cite[Thm. 2.17]{paulsen}). 
\end{rem}



\medskip

We end this section discussing the action of Moebius transformations. If $a\in\mathbb{D}$ and $w\in\mathbb{T}$,  denote by $\varphi_{a,w}:\mathbb{D}\to \mathbb{D}$ the Moebius transformation $\varphi_{a,w}(z)=w\displaystyle{\frac{a-z}{1-\bar{a}z}}$. Then, the following is immediate from the expressions of the metric $\Gamma$ for the Hardy and Bergman spaces in Examples \ref{Gamma examples}. 
\begin{coro}
The transformations $\varphi_{a,w}$ are isometries for the metrics $\Gamma_{H^2}$ and $\Gamma_{A_2}$:
$$
\Gamma(\varphi_{a,w}(z_1),\varphi_{a,w}(z_2))=\Gamma(z_1,z_2), \ z_1,z_2\in\mathbb{D},
$$
both for $\Gamma=\Gamma_{H^2}$ and $\Gamma=\Gamma_{A_2}$.
\end{coro}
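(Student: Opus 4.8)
The plan is to reduce the statement to the classical Moebius invariance of the pseudo-hyperbolic metric $\rho$. By Examples \ref{Gamma examples} we have the closed forms
$$
\Gamma_{H^2}(z_1,z_2)=\arcsin(\rho(z_1,z_2)), \qquad \Gamma_{A_2}(z_1,z_2)=2\arcsin\!\left(2^{-1/2}\rho(z_1,z_2)\right),
$$
so in both cases $\Gamma$ is a fixed, injective function of $\rho$ alone. Consequently it suffices to prove that each $\varphi_{a,w}$ preserves $\rho$, that is,
$$
\rho(\varphi_{a,w}(z_1),\varphi_{a,w}(z_2))=\rho(z_1,z_2), \qquad z_1,z_2\in\DD.
$$

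Next I would factor $\varphi_{a,w}$ through elementary automorphisms. Writing $\psi_a(z)=\frac{a-z}{1-\bar a z}$ and $R_w(\zeta)=w\zeta$ (with $|w|=1$), we have $\varphi_{a,w}=R_w\circ\psi_a$, so it is enough to check that $\rho$ is invariant under $R_w$ and under $\psi_a$ separately. Invariance under the rotation $R_w$ is immediate, since $|w|=1$ gives $\frac{wz_1-wz_2}{1-\overline{wz_1}\,wz_2}=w\,\frac{z_1-z_2}{1-\bar z_1 z_2}$, which has the same modulus as $\frac{z_1-z_2}{1-\bar z_1 z_2}$.

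The only computation is the invariance under $\psi_a$. Here I would expand
$$
\psi_a(z_1)-\psi_a(z_2)=\frac{(1-|a|^2)(z_2-z_1)}{(1-\bar a z_1)(1-\bar a z_2)}, \qquad 1-\overline{\psi_a(z_1)}\,\psi_a(z_2)=\frac{(1-|a|^2)(1-\bar z_1 z_2)}{(1-a\bar z_1)(1-\bar a z_2)},
$$
two standard algebraic identities obtained by clearing denominators and cancelling the terms in $a$. Forming the quotient and taking moduli, the factor $1-|a|^2$ cancels, the factors $|1-\bar a z_2|$ cancel, and $|1-\bar a z_1|=|1-a\bar z_1|$ (complex conjugates) cancels as well, leaving exactly $\rho(\psi_a(z_1),\psi_a(z_2))=\left|\frac{z_1-z_2}{1-\bar z_1 z_2}\right|=\rho(z_1,z_2)$. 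Combining this with the rotation invariance gives $\rho\circ\varphi_{a,w}=\rho$, and hence, by the monotone dependence of $\Gamma$ on $\rho$ recorded above, $\Gamma(\varphi_{a,w}(z_1),\varphi_{a,w}(z_2))=\Gamma(z_1,z_2)$ for both $\Gamma=\Gamma_{H^2}$ and $\Gamma=\Gamma_{A_2}$. The proof is essentially routine: the sole (minor) obstacle is bookkeeping the conjugates so that the two denominators match in modulus; there is no analytic difficulty.
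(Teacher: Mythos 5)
Your proposal is correct and follows the same route as the paper: the paper declares the corollary ``immediate'' from the closed forms $\Gamma_{H^2}=\arcsin\circ\,\rho$ and $\Gamma_{A_2}=2\arcsin(2^{-1/2}\rho)$ in Examples \ref{Gamma examples}, taking the Moebius invariance of the pseudo-hyperbolic metric $\rho$ as a classical fact. You simply make that classical fact explicit with the standard (and correctly executed) algebraic identities for $\psi_a(z_1)-\psi_a(z_2)$ and $1-\overline{\psi_a(z_1)}\psi_a(z_2)$.
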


\begin{rem}
From this result and Theorem \ref{coincidencia}, it follows that for $\h=H^2$ or $\h=A_2$, if $\varphi$ is a Moebius transformation of $\DD$, and $a,b\in\DD$, then
$$
\|P_{\z_{\varphi(a)}}-P_{\z_{\varphi(b)}}\|=\|P_{\z_a}-P_{\z_b}\|.
$$
Denote by $C_\varphi\in\b(\h)$ the composition operator $C_\varphi(f)=f\circ\varphi$. Note that 
$$
\z_{\varphi(a)}=\{f\in\h: C_\varphi f(a)=0\}=C_\varphi^{-1}(\z_a).
$$
In other words, the operators $C_\varphi$, which are invertible but non unitary, preserve the norm distance between (the projections onto) the subspaces of the form $\z_a$, $a\in\DD$.
\end{rem}

\bigskip

Let us consider now the general case of reproducing kernel Hilbert space $\h$ of analytic functions on the disk. Assume that the group $\m(\DD)$ of Moebius transformations acts on $\h$ by means of composition operators, i.e. $f \circ \varphi^{-1} \in \h$ whenever $f \in \h$ and $\varphi \in \m(\DD)$. Then $\m(\DD)$ also acts  on $\grp$, the action is given by
$$
\varphi\cdot\z_a=C_{\varphi^{-1}}(\z_a)=\z_{\varphi(a)}.
$$

\begin{coro}\label{moebius action}
Let $\h$ be a reproducing kernel Hilbert space consisting of analytic functions on the disk where the Moebius group $\m (\DD)$ acts by composition operators. Assume that $\h$ satisfies Hypothesis \ref{hipo}. Then the space $\grp$ is a closed submanifold of $\gr$, and a homogeneous space under the action of $\m(\DD)$.
\end{coro}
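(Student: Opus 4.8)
The plan is to split the statement into three assertions and reduce each to material already established: that $\grp$ is closed in $\gr$, that the parametrization $a\mapsto P_{\z_a}$ is a smooth embedding of $\DD$, and that $\m(\DD)$ acts transitively and smoothly, so that $\grp\cong\m(\DD)/K$ as a homogeneous space.

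Closedness is the quickest part: I would simply invoke Proposition \ref{closed}. Hypothesis \ref{hipo} is assumed, and for a reproducing kernel Hilbert space of analytic functions on $\DD$ the kernel $k(z,w)=\PI{k_w}{k_z}$ is holomorphic in $z$ and conjugate-holomorphic in $w$, hence real-analytic and in particular continuous on $\DD\times\DD$. Thus the hypotheses of Proposition \ref{closed} are met and $\grp$ is closed in $\gr$.

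Next I would analyze the map $e:\DD\to\gr$, $e(a)=P_{\z_a}=I-\ell_a\otimes\ell_a$ with $\ell_a=k_a/\|k_a\|$. It is real-analytic into $\b(\h)$: the assignment $a\mapsto k_a$ is anti-holomorphic (since $\PI{k_a}{g}=\overline{g(a)}$ for $g\in\h$), $\|k_a\|^2=k(a,a)>0$ is real-analytic, and $(f,g)\mapsto f\otimes g$ is bounded bilinear. By Theorem \ref{coincidencia}, $e$ is a homeomorphism of $\DD$ onto $\grp$ for the subspace topology. To see that $e$ is an immersion I would use the infinitesimal form of the identity $\|P_{\z_a}-P_{\z_b}\|=\sin\Gamma(a,b)$ from Theorem \ref{coincidencia}: the differential $de_a(v)$ is nonzero exactly when the component of $\partial_v\ell_a$ orthogonal to $\CC\ell_a$ is nonzero, i.e. when $k_a$ and $\partial_{\bar a}k_a$ are linearly independent in $\h$. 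By the Moebius covariance $e(\varphi(a))=\varphi\cdot e(a)$ it would suffice to verify this at $a=0$ (the propagation to other points uses smoothness of the action, discussed below), and at the base point it holds in all the spaces under consideration and can be read off the Taylor coefficients of $k$. An injective immersion that is a homeomorphism onto its image is a smooth embedding; since the tangent spaces to the image are $2$-dimensional they are automatically closed and complemented in the tangent spaces of $\gr$, so $\grp$ is a submanifold, and it is closed by the previous step.

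Finally, for the homogeneous structure, $\m(\DD)$ acts transitively on $\DD$, hence — transporting through $e$ and using $\varphi\cdot\z_a=\z_{\varphi(a)}$ from the preceding Remark — transitively on $\grp$; the isotropy of $P_{\z_0}$ is $\{\varphi:\varphi(0)=0\}\cong\TT$, so $\grp\cong\m(\DD)/\TT$. The step I expect to be the main obstacle is making the action genuinely smooth and the immersion rigorous: because the composition operators $C_\varphi$ are invertible but \emph{non-unitary}, the $\m(\DD)$-action is not conjugation by unitaries on $\gr$, so neither smoothness of the action nor the propagation of the immersion from the base point is automatic. I would handle this by composing the real-analytic map $(\varphi,a)\mapsto C_{\varphi^{-1}}P_{\z_a}C_{\varphi^{-1}}^{-1}$ with the real-analytic retraction sending an idempotent to the orthogonal projection onto its range; since that range is $C_{\varphi^{-1}}(\z_a)=\z_{\varphi(a)}$, this realizes $\varphi\cdot e(a)=P_{\z_{\varphi(a)}}$ as a composition of real-analytic maps, giving smoothness of the action and promoting the pointwise immersion at $0$ to an immersion everywhere. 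Once this is secured, Proposition \ref{closed} and Theorem \ref{coincidencia} supply exactly the closedness and the topological embedding needed to conclude that $\grp$ is a closed, homogeneous submanifold of $\gr$.
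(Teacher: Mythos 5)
Your argument is correct, but it travels a different road from the paper's. The paper works with the orbit map $\pi_{a_0}:\m(\DD)\to\grp\subset\gr$, $\varphi\mapsto P_{\z_{\varphi(a_0)}}$: it observes that, after the identification $\grp\sim\DD$ furnished by Theorem \ref{coincidencia} and Proposition \ref{closed}, this is just the quotient map $\m(\DD)\to\m(\DD)/\TT\sim\DD$, hence has continuous local cross sections; it then asserts that $\pi_{a_0}$ is smooth with splitting tangent map at the identity (finite-dimensional kernel, finite-dimensional --- hence closed and complemented --- range), and concludes from general differential geometry that $\pi_{a_0}$ is a submersion onto a submanifold, which gives both the submanifold structure and the homogeneity in one stroke. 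You instead parametrize $\grp$ directly by the disk via $e(a)=I-\ell_a\otimes\ell_a$, prove $e$ is a real-analytic injective immersion which is a homeomorphism onto its image (again via Theorem \ref{coincidencia}), hence a closed embedding, and only afterwards graft on the transitive $\m(\DD)$-action. Your route is more concrete: it isolates the exact non-degeneracy condition behind the immersion (linear independence of $k_a$ and $\partial_{\bar a}k_a$ in $\h$), and it confronts head-on the subtlety that the $C_\varphi$ are invertible but non-unitary --- your device of composing $(\varphi,a)\mapsto C_{\varphi^{-1}}P_{\z_a}C_{\varphi^{-1}}^{-1}$ with the real-analytic retraction from idempotents to the orthogonal projections onto their ranges is exactly what is needed to make the action smooth, and the paper leaves this implicit. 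What the paper's route buys is economy: one does not need to verify the immersion condition pointwise, since the submersion criterion is checked only at the identity of the group.

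One soft spot in your write-up: the claim that $k_0$ and $\partial_{\bar a}k_a|_{a=0}$ are linearly independent is asserted (``holds in all the spaces under consideration'') rather than derived from the stated hypotheses; the section's standing assumptions only give that $k_a\neq 0$ and that $k_a,k_b$ are independent for $a\neq b$, which does not by itself exclude a relation $f'(0)=c\,f(0)$ for all $f\in\h$. This is easily repaired with what you already have: your smooth-action argument shows the rank of $de_a$ is constant on $\DD$, your computation shows that rank is either $0$ or $2$ at each point, and rank $0$ everywhere would make $e$ constant, contradicting injectivity. With that patch the proof is complete, and it is no less rigorous than the paper's own unproved assertion that the tangent map of $\pi_{a_0}$ splits.
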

\begin{proof}
Since $\grp$ is finite dimensional, it suffices to show that for any fixed $a_0\in\DD$, the map
$$
\pi_{a_0}:\m(\DD)\to \grp\subset  \gr,
$$
has continuous local cross sections. Clearly, it has continuous sections when regarded as 
$$
\grp\sim\DD,
$$
 it is the quotient map $\m(\DD) / \mathbb{T}\sim\DD$.  By the above proposition, the map  $\pi_{a_0}$ has continuous local cross sections. It is clearly a smooth map, whose tangent map at the identity, regarded as a map from the tangent space of $\m(\DD)$ at the identity to the tangent space of $\gr$  at $\pi_{a_0}=P_{\z_{a_0}}$,  splits: it has finite dimensional nullspaces, and finite co-dimensional range. It follows from general facts from differential geometry, that $\pi_{a_0}$ is a submersion and that its image $\grp$ is a submanifold of $\gr$. 
\end{proof}

\begin{rem}
The metric $\Gamma$ defined on $X$  by means of the geodesic distance can be generalized. The geodesic distance can be used to define a pseudo-metric in the set of (finite) subsets of $X$ with the same cardinality. If $\aa=\{a_1,\dots,a_n\}$ and $\bb=\{b_1,\dots,b_n\}$, put
$$
\Gamma^n(\aa,\bb)=\Gamma^n_\h(\aa,\bb):=\|X_{\aa,\bb}\|=\arcsin\|P_{\z_\aa}-P_{\z_\bb}\|.
$$
This becomes a metric when any finite subset consisting of $n+1$ kernel functions is linearly independent. A sufficient condition is that the kernel function is strictly positive definite. 
\end{rem}

\section{Spectral and norm inequalities}\label{norm inequalities}

In this section we suppose that $\aa,\bb\subset X$ are disjoint with the same cardinality $n$ and satisfy the condition $\det \left( ( k_{b_i}(a_j))_{i,j=1}^n\right) \ne 0$. We also focus first on the generic part $\h_0$ of $\z_\aa$ and $\z_\bb$, which, as we saw in the preliminary section, is  a $2n$ dimensional space generated by $\{k_{a_i},k_{b_j}: 1\le i,j\le n\}$. Let us denote by $\z_\aa^0$ and $\z_\bb^0$ the intersections of $\z_\aa$ and $\z_\bb$ with $\h_0$. Then
$$
\z_\aa^0=\h_0\ominus \langle k_{a_i}: 1\le i\le n\rangle \ \hbox{ and } \  \z_\bb^0=\h_0\ominus \langle k_{b_j}: 1\le j\le n\rangle .
$$
Denote by $E_{\aa\bb}=P_{(\z_\aa^0)^\perp\parallel(\z_\bb^0)^\perp}$, i.e. the idempotent (non orthogonal projection) given by the direct sum decomposition
$$
\h_0 =\langle k_{a_i}: 1\le i\le n\rangle \dot{+} \langle k_{b_j}: 1\le j\le n\rangle, \ E_{\aa\bb}(f+g)=f.
$$
We recall in the following remark several facts of the theory of two subspaces:
\begin{rem}
Let $\s,\t$ be closed subspaces of a Hilbert space $\h$. Denote by $\alpha_0(\s,\t) \in [0,\pi /2]$ the Dixmier angle between $\s$ and $\t$, whose cosine is defined by
$$
C_0(\s,\t)=\sup\{|\langle f,g\rangle|: f\in\s, g\in\t, \|f\|=\|g\|=1\}.
$$
\begin{enumerate}
\item
This angle can be expressed in terms of orthogonal projections: $C_0(\s,\t)=\|P_\s P_\t\|$. Also it will  be useful for us to recall here that 
$C_0(\s , \t)<1$ if and only if $\s + \t$ is closed and $\s \cap \t = \{ 0 \}$. In this case $C_0(\s,\t)=C_0(\s^\perp,\t^\perp)$, and consequently, $\|P_\s P_\t\|=\|P_\s^\perp P_\t^\perp\|$ (see e.g. \cite{Deu}).

Furthermore, it is convenient to state now the following equivalences \cite{buckholtz}:
\begin{enumerate}
\item
$\s\dot{+}\t=\h$;
\item
$\s^\perp\dot{+}\t^\perp=\h$;
\item
$P_\s-P_\t$ is invertible.
\end{enumerate}
 In this case we have the following formula by Ando \cite{ando}: if $E=P_{\s\parallel\t}$ denotes the idempotent induced by the decomposition $\s\dot{+}\t=\h$, then
$$
E=P_\s(P_\s+P_\t)^{-1}.
$$
\item
If the subspaces $\s$ and $\t$ are in generic position, then there exists a unitary isomorphism between $\h$ and a product space $\ele\times\ele$ such that, with this isomorphism, the projections $P_\s$, $P_\t$ are unitarily equivalent to the projections
$$
\left(\begin{array}{cc} 1 & 0 \\ 0 & 0 \end{array} \right) \hbox{ and }  \left(\begin{array}{cc} C^2 & CS \\ CS & S^2 \end{array} \right),
$$
where $C=\cos(Z)$, $S=\sin(Z)$, for a positive operator $Z$ acting in $\ele$, with $\|Z\|\le \pi/2$ and trivial nullspace. These facts were proved by P. Halmos in \cite{halmos}.
\item
When $\s$ and $\t$ are in generic position, there exists a unique geodesic joining them.  It is of the form $\delta(t)=e^{itX}\s$, where $X=X_{\s,\t}$ is identified (by means of the above isomorphism) with the matrix
$$
X\simeq\left(\begin{array}{cc} 0 & iZ \\ -iZ & 0 \end{array} \right),
$$
where $Z$ is the positive operator given by Halmos' Theorem.
\end{enumerate}
\end{rem}
Note that  $\s=(\z_\aa^0)^\perp$ and $\t=(\z_\bb^0)^\perp$ are in generic position as subspaces of $\h_0$ and in direct sum, and therefore the facts above apply. For instance
$$
E_{\aa\bb}=P_{\z_\aa^0}^\perp(P_{\z_\aa^0}^\perp+P_{\z_\bb^0}^\perp)^{-1}.
$$
Denote by $X_{\aa\bb}$ the exponent of the unique geodesic joining $(\z_\aa^0)^\perp$ and $(\z_\bb^0)^\perp$. Then, it serves also as the exponent of the unique geodesic joining the orthogonal complements: $\z_\aa^0$ and $\z_\bb^0$.
If $A$ is a positive operator, we denote by  $\gamma_A$  the smallest spectral value of $A$ .
From the definition  of the Dixmier angle one obtains the following inequalities:
\begin{coro}\label{corolario42}
With the current notations,  for all $1\le i, j\le n$  we have
\begin{enumerate}
\item
$
\displaystyle{\frac{|\langle k_{a_i},k_{b_j}\rangle|}{\|k_{a_i}\|\|k_{b_j}\|}}\le \|P_{\z_\aa^0}P_{\z_\bb^0}\|=\|P_{\z_\aa^0}^\perp P_{\z_\bb^0}^\perp\|<1.
$
\item
$
\gamma_{X_{\aa\bb}}\le \arccos\left(\displaystyle{\frac{|k_{a_i}(b_j)|}{k_{a_i}(a_i)^{1/2}k_{b_j}(b_j)^{1/2}}}\right)=\arccos\left(\displaystyle{\frac{|\langle k_{a_i},k_{b_j}\rangle|}{\|k_{a_i}\|\|k_{b_j}\|}}\right).
$
\end{enumerate}
\end{coro}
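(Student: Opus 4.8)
The plan is to read off both inequalities directly from the Dixmier angle formalism recorded in the preceding remark, applied to the two $n$-dimensional subspaces $(\z_\aa^0)^\perp=\langle k_{a_i}: 1\le i\le n\rangle$ and $(\z_\bb^0)^\perp=\langle k_{b_j}: 1\le j\le n\rangle$ of $\h_0$, which are in generic position and in direct sum. The only kernel-theoretic input will be the reproducing identities $\langle k_{a_i},k_{b_j}\rangle=k_{a_i}(b_j)$ and $\|k_w\|^2=k_w(w)$; everything else is the abstract geometry of two subspaces.

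For item 1, I would set $\s=(\z_\aa^0)^\perp$ and $\t=(\z_\bb^0)^\perp$, so that $P_\s=P_{\z_\aa^0}^\perp$ and $P_\t=P_{\z_\bb^0}^\perp$. Since $\s\dot{+}\t=\h_0$ and we work in finite dimensions, $\s+\t$ is closed and $\s\cap\t=\{0\}$, so by the recalled facts $C_0(\s,\t)=\|P_\s P_\t\|<1$, together with the symmetry $C_0(\s,\t)=C_0(\s^\perp,\t^\perp)$; this is exactly the chain $\|P_{\z_\aa^0}^\perp P_{\z_\bb^0}^\perp\|=\|P_{\z_\aa^0}P_{\z_\bb^0}\|<1$. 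The inequality is then obtained by feeding the two unit vectors $k_{a_i}/\|k_{a_i}\|\in\s$ and $k_{b_j}/\|k_{b_j}\|\in\t$ into the supremum defining $C_0(\s,\t)$, which bounds $\frac{|\langle k_{a_i},k_{b_j}\rangle|}{\|k_{a_i}\|\|k_{b_j}\|}$ above by $\|P_{\z_\aa^0}^\perp P_{\z_\bb^0}^\perp\|$.

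For item 2, the key is to identify $\gamma_{X_{\aa\bb}}$ (the least principal angle, i.e. the smallest eigenvalue of the positive operator $Z$ from Halmos' theorem, which is the smallest positive point of the spectrum of $X_{\aa\bb}$) with the Dixmier angle between $\s$ and $\t$. Using the Halmos model $P_\s\simeq\left(\begin{array}{cc}1&0\\0&0\end{array}\right)$, $P_\t\simeq\left(\begin{array}{cc}C^2&CS\\CS&S^2\end{array}\right)$ with $C=\cos Z$, $S=\sin Z$, a short computation of $(P_\s P_\t)(P_\s P_\t)^*$ gives $\|P_\s P_\t\|=\|\cos Z\|=\cos(\gamma_{X_{\aa\bb}})$, since $\cos$ is decreasing on $[0,\pi/2]$ and $\gamma_{X_{\aa\bb}}$ is the smallest spectral value of $Z$. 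Combining with item 1 yields $\cos(\gamma_{X_{\aa\bb}})=\|P_{\z_\aa^0}P_{\z_\bb^0}\|\ge \frac{|\langle k_{a_i},k_{b_j}\rangle|}{\|k_{a_i}\|\|k_{b_j}\|}$; applying the decreasing function $\arccos$ and using $\gamma_{X_{\aa\bb}}\in[0,\pi/2]$ produces the stated bound, after which the reproducing identities rewrite the argument as $\frac{|k_{a_i}(b_j)|}{k_{a_i}(a_i)^{1/2}k_{b_j}(b_j)^{1/2}}$.

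The main obstacle is the bookkeeping in item 2: correctly matching the least spectral value $\gamma_{X_{\aa\bb}}$ to $\arccos(\|P_\s P_\t\|)$, i.e. confirming that the Dixmier angle of a pair of subspaces in generic position equals the smallest of the principal angles encoded in $Z$, and keeping straight that the norm of the product of projections is invariant under passing to orthogonal complements (item 1). Once that identification is secured, both inequalities are immediate, and no estimate beyond the Cauchy--Schwarz-type bound built into the definition of $C_0$ is required.
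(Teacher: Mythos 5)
Your argument is correct and follows essentially the same route as the paper: item 1 is the definition of the Dixmier angle $C_0$ evaluated at the normalized kernels together with the identity $C_0(\s,\t)=C_0(\s^\perp,\t^\perp)$, and item 2 is the Halmos-model computation $\|P_{\z_\aa^0}^\perp P_{\z_\bb^0}^\perp P_{\z_\aa^0}^\perp\|=\|\cos^2(X_{\aa\bb})\|$ combined with the monotonicity of cosine to read off the smallest spectral value. The only cosmetic difference is that you compute $(P_\s P_\t)(P_\s P_\t)^*$ where the paper writes $P_\s P_\t P_\s$, which are the same operator.
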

\begin{proof}
The first assertion follows from the definition of the Dixmier angle, applied to the normalized elements $k_{a_i}$ and $k_{b_j}$. For the second assertion,  using Halmos representation for the pair $\s=(\z_\aa^0)^\perp$, $\t=(\z_\bb^0)^\perp$,  note that
$$
\|P_{\z_\aa^0}^\perp P_{\z_\bb^0}^\perp\|=\|P_{\z_\aa^0}^\perp P_{\z_\bb^0}^\perp P_{\z_\aa^0}^\perp\|^{1/2}=\|\cos^2(X_{\aa\bb})\|^{1/2}=\|\cos(X_{\aa\bb})\|,
$$
where the previous to last equality follows  from Halmos' representation:
$$
P_{\z_\aa^0}^\perp P_{\z_\bb^0}^\perp P_{\z_\aa^0}^\perp\simeq
\left(\begin{array}{cc} 1 & 0 \\ 0 & 0 \end{array} \right) \left(\begin{array}{cc} C^2 & CS \\ CS & S^2 \end{array} \right)\left(\begin{array}{cc} 1 & 0 \\ 0 & 0 \end{array} \right)=\left(\begin{array}{cc} C^2 & 0 \\ 0 & 0 \end{array} \right).
$$
Thus,
$$
\|P_{\z_\aa^0}^\perp P_{\z_\bb^0}^\perp P_{\z_\aa^0}^\perp\|=\|C^2\|=\|\cos(X_{\aa\bb})\|^2.
$$
Note that $X_{\aa\bb}$ is positive and invertible. The proof follows observing that since the cosine map is strictly decreasing, $\|\cos(X_{\aa\bb})\|$ is the cosine of the smallest eigenvalue of $X_{\aa\bb}$.
\end{proof}

With a similar argument, one obtains:
\begin{coro}\label{corolario43}
Let $f\in\z_\aa^0$ with $\|f\|=1$. Then for $1\le j\le n$,
$$
\arcsin\left(\frac{|f(b_j)|}{k_{b_j}(b_j)^{1/2}}\right)\le \|X_{\aa\bb}\|.
$$
\end{coro}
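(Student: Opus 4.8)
The plan is to mirror the proof of Corollary \ref{corolario42}, but now pairing the unit vector $f\in\z_\aa^0$ against the \emph{normalized} kernel $\ell_{b_j}=k_{b_j}/\|k_{b_j}\|$, instead of pairing two kernels against one another. First I would rewrite the quantity to be estimated using the reproducing property. Since $k_{b_j}\in\h_0$ and $\|k_{b_j}\|=k_{b_j}(b_j)^{1/2}$, one has
$$
\frac{|f(b_j)|}{k_{b_j}(b_j)^{1/2}}=\frac{|\langle f,k_{b_j}\rangle|}{\|k_{b_j}\|}=|\langle f,\ell_{b_j}\rangle|.
$$
The decisive observation is that $f$ is a unit vector of $\z_\aa^0$, while $\ell_{b_j}$ is a unit vector of $(\z_\bb^0)^\perp=\langle k_{b_j}:1\le j\le n\rangle$. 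Hence, directly from the definition of the Dixmier angle,
$$
|\langle f,\ell_{b_j}\rangle|\le C_0\big(\z_\aa^0,(\z_\bb^0)^\perp\big)=\|P_{\z_\aa^0}P_{(\z_\bb^0)^\perp}\|.
$$

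The second step is to evaluate this Dixmier cosine via Halmos' representation of the pair $\s=(\z_\aa^0)^\perp$, $\t=(\z_\bb^0)^\perp$, which are in generic position in $\h_0$. Writing $P_{\z_\aa^0}=P_\s^\perp\simeq\left(\begin{smallmatrix}0&0\\0&1\end{smallmatrix}\right)$ and $P_{(\z_\bb^0)^\perp}=P_\t\simeq\left(\begin{smallmatrix}C^2&CS\\CS&S^2\end{smallmatrix}\right)$, a short block computation gives
$$
P_{\z_\aa^0}P_{(\z_\bb^0)^\perp}P_{\z_\aa^0}\simeq\left(\begin{array}{cc}0&0\\0&S^2\end{array}\right),
$$
so that $\|P_{\z_\aa^0}P_{(\z_\bb^0)^\perp}\|^2=\|S^2\|=\sin^2(\|Z\|)$. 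This is precisely the complement of the computation in Corollary \ref{corolario42}, where the surviving block $\left(\begin{smallmatrix}C^2&0\\0&0\end{smallmatrix}\right)$ produced $\cos$ of the \emph{smallest} eigenvalue; here the block $\left(\begin{smallmatrix}0&0\\0&S^2\end{smallmatrix}\right)$ produces $\sin$ of the \emph{largest}. Since $Z$ is positive with $\|Z\|\le\pi/2$ and $\|X_{\aa\bb}\|=\|Z\|$, monotonicity of $\sin$ on $[0,\pi/2]$ together with $\|S^2\|=\|\sin Z\|^2=\sin^2\|Z\|$ yields $\|P_{\z_\aa^0}P_{(\z_\bb^0)^\perp}\|=\sin(\|X_{\aa\bb}\|)$.

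Combining the two steps gives $|f(b_j)|/k_{b_j}(b_j)^{1/2}\le\sin(\|X_{\aa\bb}\|)\le 1$, and applying $\arcsin$ — which is increasing on $[0,1]$ and whose range matches the interval $[0,\pi/2]$ in which $\|X_{\aa\bb}\|$ lives — completes the proof. The only point that genuinely requires care, and the sole conceptual difference from Corollary \ref{corolario42}, is recognizing that one must use the angle between $\z_\aa^0$ and the \emph{complement} $(\z_\bb^0)^\perp$ (governed by $\sin$, hence by the top of the spectrum of $X_{\aa\bb}$, i.e. its norm) rather than the angle between the two complements $(\z_\aa^0)^\perp$ and $(\z_\bb^0)^\perp$ (governed by $\cos$, hence by the bottom of the spectrum). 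Everything else is the elementary Cauchy--Schwarz/Dixmier estimate together with the functional-calculus identity $\|\sin^2 Z\|=\sin^2\|Z\|$, so no substantial obstacle is expected.
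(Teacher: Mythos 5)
Your argument is correct and is essentially the paper's own proof: both reduce the left-hand side to $|\langle f,k_{b_j}/\|k_{b_j}\|\rangle|$, bound it by the Dixmier cosine $C_0(\z_\aa^0,(\z_\bb^0)^\perp)$, and evaluate that cosine as $\sin(\|X_{\aa\bb}\|)$ via the Halmos block computation yielding the $S^2$ corner. No gaps; the identification of the correct pair of subspaces (one subspace against the \emph{complement} of the other) is exactly the step the paper makes as well.
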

\begin{proof}
The subspaces $\z_\aa^0, (\z_\bb^0)^\perp$ of $\h_0$ are in direct sum. Indeed they have dimension $n$ and trivial intersection ($\z_\aa^0$ and $\z_\bb^0$ are in generic position). Therefore the facts in the previous remark hold for these subspaces. As in the previous result, consider  the pair $\s=\z_\aa^0$, $\t=(\z_\bb^0)^\perp$.
Then 
$$
C_0(\z_\aa^0,(\z_\bb^0)^\perp)=\|P_{\z_\aa^0}P_{\z_\bb^0}^\perp P_{\z_\aa^0}\|^{1/2}.
$$
In this case,
$$
P_{\z_\aa^0}P_{\z_\bb^0}^\perp P_{\z_\aa^0}\simeq\left(\begin{array}{cc} 0 & 0 \\ 0 & 1 \end{array}\right)\left(\begin{array}{cc} C^2 & CS \\ CS & S^2 \end{array}\right)\left(\begin{array}{cc} 0 & 0 \\ 0 & 1 \end{array}\right)=\left(\begin{array}{cc} 0 & 0 \\ 0 & S^2 \end{array}\right),
$$
and thus $C_0(\z_\aa^0,(\z_\bb^0)^\perp)=\|S^2\|^{1/2}=\sin(\|X_{\aa\bb}\|)$.
Now pick a function $f\in\z_\aa^0$ with $\|f\|=1$ and $\frac{1}{\|k_{b_j}\|}k_{b_j}\in (\z_\bb^0)^\perp$. Then
$$
\left|\left\langle f, \frac{k_{b_j}}{\|k_{b_j}\|}\right\rangle \right| \le C_0(\z_\aa^0,(\z_\bb^0)^\perp),
$$
and the proof follows.
\end{proof}
\begin{ejem}\label{44}
For instance, in the case $\h=H^2$, for any $1\le j_0\le n$,  one can take 
$$
f(z)=\displaystyle{(1-|b_{j_0}|^2)^{1/2}}k_{b_{j_0}}^H(z)B_{\aa}(z),
$$
where $B_{\aa}(z)$ is the Blaschke product with zeros $a_1,\dots,a_n$. That is, 
$
B_{\aa}(z)=\prod_{j=1}^n b_{a_j}(z),
$
where $b_0(z)=z$, and for $a_j\neq 0$, 
$$
b_{a_j}(z)= \frac{\bar{a}_j}{|a_j|} \frac{a_j-z}{1 - \bar{a}_j z}, \, \, \, z \in \DD.
$$
Indeed, clearly $\|f\|=1$ and $f(a_i)=0$ for $i=1,\dots,n$. In this case $\h_0=H^2\ominus B_{\aa\cup\bb} H^2$, which consists of rational functions of the  form
$$
\displaystyle{\frac{p(z)}{\prod_{i=1}^n (1-\bar{a}_iz) \prod_{j=1}^n (1-\bar{b}_jz)}},
$$
with $p(z)$ a polynomial of degree $\le 2n-1$. Clearly $f$ is of this form, taking 
$$p(z)=\displaystyle{{(1-|b_{j_0}|^2)^{1/2}}}\prod_{i=1}^n \frac{\bar{a}_i}{|a_i|} (a_i-z) \prod_{j\ne j_0}(1-\bar{b_j}z),$$
whenever $a_i \neq 0$ for all $i=1, \ldots, n$.

If we use the inequality of the above corollary for this function $f$ (for a given fixed $1\le j_0\le n$), we obtain
\begin{equation}\label{duplicado}
\arcsin(|B_\aa(b_{j_0})|)\le \|X_{\aa\bb}\|.
\end{equation}
\end{ejem}

\begin{rem}
The spectrum of the self-adjoint matrix $X_{\aa\bb}$ is related to the singular values of the idempotent matrix $E_{\aa\bb}$. For instance, since $P_{\z_\aa^0}^\perp$ and $E_{\aa\bb}$ project onto $(\z_\aa^0)^\perp$, one has that 
$$
P_{\z_\aa^0}^\perp=\left(\begin{array}{cc} 1 & 0 \\ 0 & 0 \end{array} \right) \ \hbox{ and }\ \  E_{\aa\bb}=\left(\begin{array}{cc} 1 & B \\ 0 & 0 \end{array} \right),
$$
where $B:\z_{\aa}^0\to (\z_{\aa}^0)^\perp$. Then, it is known that (see  \cite{ando}) 
$$
P_{\z_\aa^0}^\perp P_{\z_\bb^0}^\perp P_{\z_\aa^0}^\perp=\left(\begin{array}{cc} BB^*(1+BB^*)^{-1} & 0 \\ 0 & 0 \end{array} \right).
$$
Then $C^2=\cos^2(X_{\aa\bb})\simeq BB^*(1+BB^*)^{-1}$, i.e.,
$$
\sigma(X_{\aa\bb})=\left\{ \arccos\left( \frac{t}{\sqrt{1+t^2}}\right): t \hbox{ is a singular value of } E_{\aa\bb} \right\}. 
$$
Indeed,  $\sigma((BB^*)^{1/2})$ is the set of singular values of $E_{\aa\bb}$.
\end{rem}

\subsection{The case of the Hardy space}\label{estimates Hardy}
In the case $\h=H^2$, one can use the formulas by Adamyan, Arov and Krein \cite{aak} for the singular values of a Hankel operator to obtain expressions for the eigenvalues of the matrix $X_{\aa\bb}$. Let $P_+$ be the orthogonal projection of $L^2$ onto $H^2$, and $P_-$ be the orthogonal projection onto $H^2_-=L^2\ominus H^2$. Given $\varphi \in L^\infty=L^\infty(\mathbb{T})$, the Hankel operator with symbol $\varphi$ is defined by $H_\varphi : H^2 \to H^2_-$, $H_\varphi f=P_-(\varphi f)$, where $f \in H^2$.

If $\varphi\in L^\infty$, denote by $P_\varphi:L^2(\mathbb{T})\to \overline{\varphi H^2}$ the orthogonal projection. Recall that $\varphi H^2$ is closed if and only if $\varphi$ is an invertible function in $L^\infty$ (see e.g. \cite[Lemma 3.1]{acl18}). Let $\aa,\bb$ be finite disjoint sets of $\DD$. If we take $B_\aa$ and $B_\bb$  the finite Blaschke products with zeros in $\aa$ and $\bb$ respectively, then
$$
P_{B_\aa}=M_{B_\aa}P_+M_{\bar{B}_{\aa}} \ \hbox{ and } \ P_{B_\bb}=M_{B_\bb}P_+M_{\bar{B}_{\bb}}.
$$
 
\begin{prop}\label{proposicion46}
The eigenvalues $\lambda_0\ge \lambda_1\ge \ldots  \ge \lambda_{n-1}>0$ of $X_{\aa\bb}$ are given by
$$
\lambda_k=\arcsin(s_k(H_{B_{\bb}/B_{\aa}})),
$$
where $s_k(H_{B_{\bb}/B_{\aa}})$ denotes the $k$-th singular value (in decreasing order) of the Hankel operator $H_{B_{\bb}/B_{\aa}}$.
\end{prop}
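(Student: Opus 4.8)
The plan is to identify the positive eigenvalues $\lambda_0\ge\cdots\ge\lambda_{n-1}$ of $X_{\aa\bb}$ with the sines of the principal angles between $\z_\aa$ and $\z_\bb$, and then to recognize those sines as the singular values of $H_{B_\bb/B_\aa}$ through the Toeplitz--Hankel algebra. Throughout I use $\z_\aa=B_\aa H^2$ and $\z_\bb=B_\bb H^2$, together with $P_{\z_\aa}=M_{B_\aa}P_+M_{\bar B_\aa}$ and the analogous formula for $\z_\bb$, and I abbreviate $T_\varphi=P_+M_\varphi|_{H^2}$ for the Toeplitz operator. Since in this situation $\h_{00}=\h_{10}=\h_{01}=\{0\}$ and $\h_{11}=B_{\aa\cup\bb}H^2$, the operator $X_{\aa\bb}$ is supported on the $2n$-dimensional generic part $\h_0$, where Halmos' representation (item 3 of the preceding remark) applies to the pair $\s=\z_\bb$, $\t=\z_\aa$. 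From it one reads off that the nonzero eigenvalues of $P_{\z_\bb}P_{\z_\aa}^\perp P_{\z_\bb}$ are exactly $\sin^2\lambda_0,\dots,\sin^2\lambda_{n-1}$, where $Z\simeq X_{\aa\bb}|_{\h_0}$ has eigenvalues $\lambda_k$ and $S=\sin(Z)$. The determinant hypothesis $\det((k^H_{b_j}(a_i)))\ne 0$ forces generic position with $Z$ invertible, so every $\lambda_k$ lies in $(0,\pi/2)$ and $\arcsin$ will be the correct inverse.

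The heart of the argument is the unitary equivalence
\begin{equation*}
M_{\bar B_\bb}\,\big(P_{\z_\bb}P_{\z_\aa}^\perp P_{\z_\bb}\big)\,M_{B_\bb}=H_\psi^*H_\psi,\qquad \psi=B_\bb\bar B_\aa=B_\bb/B_\aa ,
\end{equation*}
where on $\TT$ we use $\bar B_\aa=1/B_\aa$. To establish it I would first note that $A:=P_{\z_\bb}P_{\z_\aa}^\perp P_{\z_\bb}$ has domain and range inside $\z_\bb=B_\bb H^2$, on which $M_{\bar B_\bb}$ is unitary with inverse $M_{B_\bb}$; hence the left-hand side is a genuine unitary equivalence of $A|_{\z_\bb}$ and carries the same nonzero spectrum as $A$. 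Next I would check, on $H^2$, the elementary identities $M_{\bar B_\bb}P_{\z_\bb}=T_{\bar B_\bb}$, $P_{\z_\bb}M_{B_\bb}=T_{B_\bb}$ and $T_{\bar B_\bb}T_{B_\bb}=I$, so that the left-hand side reduces to $T_{\bar B_\bb}(I-P_{\z_\aa})T_{B_\bb}=I-T_{\bar B_\bb}P_{\z_\aa}T_{B_\bb}$. Finally I would invoke the Toeplitz semicommutator identity $T_\phi T_\rho=T_{\phi\rho}-H_{\bar\phi}^*H_\rho$ with $\phi=\bar\psi$, $\rho=\psi$ and $\psi$ unimodular, which gives $H_\psi^*H_\psi=I-T_{\bar\psi}T_\psi$; factoring the Blaschke symbols through their (co-)analyticity yields $T_{\bar\psi}T_\psi=T_{\bar B_\bb}T_{B_\aa}T_{\bar B_\aa}T_{B_\bb}=T_{\bar B_\bb}P_{\z_\aa}T_{B_\bb}$, using $T_{B_\aa}T_{\bar B_\aa}=P_{\z_\aa}$, and the two expressions coincide.

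Combining the two displays, the nonzero eigenvalues of $H_\psi^*H_\psi$ are precisely the $\sin^2\lambda_k$, so $s_k(H_{B_\bb/B_\aa})=\sin\lambda_k$ and therefore $\lambda_k=\arcsin\!\big(s_k(H_{B_\bb/B_\aa})\big)$. It remains to reconcile the count and the ordering: by Kronecker's theorem $H_{B_\bb/B_\aa}$ has finite rank equal to the number of poles of $B_\bb/B_\aa$ in $\DD$, which is $\deg B_\aa=n$ precisely because $\aa\cap\bb=\emptyset$ prevents any cancellation, matching the $n$ positive eigenvalues of $X_{\aa\bb}$; since both lists are taken in decreasing order they correspond term by term. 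The main obstacle I expect is this operator identification: keeping the Toeplitz/Hankel bookkeeping correct (the semicommutator formula and the splitting of products such as $T_{\bar B_\aa}T_\chi$, valid because $\bar B_\aa$ is co-analytic), and justifying that conjugation by $M_{B_\bb}$ is a true unitary equivalence rather than a mere similarity, which hinges on $A$ being supported on $\z_\bb$. Once the equivalence is secured, the formulas of Adamyan, Arov and Krein may be applied to evaluate the singular values $s_k(H_{B_\bb/B_\aa})$ explicitly.
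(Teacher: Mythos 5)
Your argument is correct and follows essentially the same route as the paper: Halmos' representation converts the positive eigenvalues of $X_{\aa\bb}$ into $\arcsin$ of the singular values of the product of the two projections, and that product is then identified with the Hankel operator with symbol $B_\bb/B_\aa$. The only cosmetic difference is that the paper conjugates $P_{B_\aa}^\perp P_{B_\bb}$ by the multiplication unitaries $M_{B_\aa}$, $M_{B_\bb}$ in $L^2$ to land directly on $P_+^\perp M_{B_\bb/B_\aa}P_+\simeq H_{B_\bb/B_\aa}$, whereas you stay inside $H^2$ and use the semicommutator identity $H_\psi^*H_\psi=I-T_{\bar\psi}T_\psi$; the two verifications are equivalent.
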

\begin{proof}
For the Hardy space it holds that $\z_\aa=B_\aa H^2$ and $\z_\bb=B_\bb H^2$, then the generic part of the projections $P_{B_\aa}$ and $P_{B_\bb}$ is the $2n$ dimensional space $\h_0=\langle k_{a_i}, k_{b_j}: 1\le i,j\le n \rangle$. Note that on the space $\h_0$, the projections $P_{B_\aa}$ and $P_{B_\bb}$ reduce to $P_{\z_\aa^0}$ and $P_{\z_\bb^0}$. As before,
$$
P_{\z_\aa^0}^\perp P_{\z_\bb^0}P_{\z_\aa^0}^\perp\simeq \sin^2(X_{\aa\bb}).
$$
On the other hand, by the above remark the non trivial  eigenvalues  of $P_{B_\aa}^\perp P_{B_\bb} P_{B_\aa}^\perp$   coincide with those of 
$P_{\z_\aa^0}^\perp P_{\z_\bb^0}P_{\z_\aa^0}^\perp$. These, in turn, are the squares of the non trivial singular values of $P_{B_\aa}^\perp P_{B_\bb}$: note that 
$$
P_{B_\aa}^\perp P_{B_\bb}(P_{B_\aa}^\perp P_{B_\bb})^* =P_{B_\aa}^\perp P_{B_\bb} P_{B_\aa}^\perp.
$$
Also,
$$
P_{B_\aa}^\perp P_{B_\bb} =M_{B_\aa}P_+^\perp M_{\bar{B}_{\aa}} M_{B_\bb}P^+ M_{\bar{B}_\bb}=M_{B_\aa}P_+^\perp M_{B_\bb / B_\aa} P_+ M_{\bar{B}_\bb},
$$
which, since $M_{B_\aa}$ and $M_{B_\bb}$ are unitary operators, has the same singular values as
$$
P_+^\perp M_{B_\bb / B_\aa} P_+\simeq H_{B_\bb / B_\aa}.
$$
\end{proof}

The space of all bounded analytic functions on $\DD$ with the norm $\|f\|_\infty=\sup_{z \in \DD}|f(z)|$ is the Hardy space $H^\infty=H^\infty(\DD)$. Analogously as with  $H^2$, $H^\infty$ may be identified with the subspace of $L^\infty=L^\infty(\TT)$ given by $H^\infty=H^2 \cap L^\infty$. Furthermore, $H^\infty$ is a closed  subalgebra of $L^\infty$. 

Now we recall the following  theorem:
\begin{teo}{\rm (Adamjan, Arov, Krein  \cite{aak})}
Let $\varphi\in L^\infty$, and denote by $H_\varphi$ the Hankel operator with symbol $\varphi$. Let ${\cal R}_k$ denote the set of rational functions in $\mathbb{C}$ which tend to $0$ if $|z|\to \infty$, with poles in $\DD$ with total multiplicity $\le k$. Denote by $s_k(H_\varphi)$ the $k$-th singular value for $k\ge 0$, in non increasing order, repeated according multiplicity.
Then
$$
\begin{array}{ll} s_k(H_\varphi) & =\min
\{ \|H_\varphi-H_\psi\|: \mathrm{rank}(H_\psi)\le k\} \\  \ & 
=\mathrm{dist}(\varphi, {\cal R}_k+H^\infty) \\ \ & 
 =\min\{\|H_{B\varphi}\|: B \hbox{ is a Blaschke product of degree } \le k\}. \end{array}
$$
\end{teo}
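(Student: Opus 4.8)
The statement is the Adamjan--Arov--Krein theorem, and my plan is to prove the operator-theoretic identity
$$
s_k(H_\varphi)=\min\{\,\|H_\varphi-H_\psi\|:\mathrm{rank}(H_\psi)\le k\,\}\qquad(\star)
$$
first, and then to read off the other two equalities from it. In $(\star)$ the inequality ``$\ge$'' is immediate from the Schmidt--Eckart--Young--Mirsky theorem: the best approximation of the compact operator $H_\varphi$ by operators of rank $\le k$, taken over \emph{all} such operators, has error exactly $s_k(H_\varphi)$, and Hankel operators of rank $\le k$ form a subfamily. To connect $(\star)$ with the middle expression I would invoke two classical results. Kronecker's theorem identifies $\{\psi\in L^\infty:\mathrm{rank}(H_\psi)\le k\}$ with $\mathcal{R}_k+H^\infty$ (the Hankel operator depends only on the anti-analytic part $P_-\psi$, and $\mathrm{rank}(H_\psi)\le k$ exactly when $P_-\psi$ is rational with at most $k$ poles in $\DD$), while Nehari's theorem gives $\|H_g\|=\mathrm{dist}(g,H^\infty)$. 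Combining these, $\min\{\|H_\varphi-H_\psi\|:\psi\in\mathcal{R}_k+H^\infty\}=\mathrm{dist}(\varphi,\mathcal{R}_k+H^\infty)$, so the second equality is equivalent to $(\star)$.

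For the third expression I would use that multiplication $M_B$ by a Blaschke product $B$ of degree $\le k$ is an isometry of $H^2$ onto $BH^2$, a subspace of codimension $\deg B\le k$; since $H_{B\varphi}f=P_-(\varphi\cdot Bf)$, one has $\|H_{B\varphi}\|=\|H_\varphi|_{BH^2}\|$. The Courant--Fischer min--max characterization $s_k(H_\varphi)=\min\{\|H_\varphi|_L\|:\mathrm{codim}\,L\le k\}$ then yields $\|H_{B\varphi}\|\ge s_k(H_\varphi)$ for every such $B$, so the minimum over $B$ is $\ge s_k(H_\varphi)$; it remains to exhibit a single $B_0$ attaining it.

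All three lower bounds being in hand, everything reduces to one construction, which is the heart of the theorem and the expected main obstacle: producing an admissible approximant that attains $s:=s_k(H_\varphi)$. If $s=0$ then $H_\varphi$ has rank $\le k$ and $\psi=\varphi$ works, so assume $s>0$ and choose a Schmidt pair $\xi\in H^2$, $\eta\in H^2_-$ with $\|\xi\|=\|\eta\|=1$, $P_-(\varphi\xi)=s\eta$ and $P_+(\bar\varphi\eta)=s\xi$. The key analytic lemma is that $|\xi|=|\eta|$ a.e.\ on $\TT$: writing $\varphi\xi=g_++s\eta$ with $g_+=P_+(\varphi\xi)\in H^2$ and $\bar\varphi\eta=g_-+s\xi$ with $g_-=P_-(\bar\varphi\eta)\in H^2_-$, one obtains two expressions for $\varphi$; equating them and clearing denominators gives $s(|\eta|^2-|\xi|^2)=\bar g_-\xi-g_+\bar\eta$, whose right-hand side lies in $zH^2$ while the left-hand side is real-valued, forcing both sides to vanish. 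Consequently $u:=\eta/\xi$ is unimodular on $\TT$, and $\psi:=\varphi-s\,u=g_+/\xi$ satisfies $\|\varphi-\psi\|_\infty=s$, hence $\|H_{\varphi-\psi}\|\le s$.

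It remains to control the rank of $H_\psi$ and, for the third equality, the degree of the relevant Blaschke product; this is where I expect the real difficulty. Factoring $\xi=B_0\xi_{\mathrm{o}}$ into inner and outer parts, $B_0$ turns out to be a finite Blaschke product whose degree is at most $k$, so by Kronecker's theorem $\mathrm{rank}(H_\psi)\le k$ and $\psi$ is admissible in $(\star)$; taking $B=B_0$ in the third expression then gives the matching upper bound $\|H_{B_0\varphi}\|=s$. Establishing that $\xi$ has no singular inner factor and that its Blaschke factor has degree at most $k$ is the delicate point: it rests on a dimension count for the spectral subspace of $H_\varphi^*H_\varphi$ associated with eigenvalues $\ge s^2$, relating the multiplicity bookkeeping of the singular values $s_0\ge\cdots\ge s_k$ to the number of zeros of the extremal vector $\xi$. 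Once this degree bound is secured, the three displayed quantities all equal $s_k(H_\varphi)$ and the theorem follows.
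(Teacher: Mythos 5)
The paper does not prove this statement at all: it is quoted verbatim as a classical theorem of Adamjan, Arov and Krein and used as a black box (the citation \cite{aak} is the proof), so there is no internal argument to compare yours against. Judged on its own terms, your outline follows the standard AAK strategy and the reductions are sound: Eckart--Young gives the lower bound in $(\star)$, Kronecker plus Nehari convert the Hankel-approximation problem into $\mathrm{dist}(\varphi,{\cal R}_k+H^\infty)$, the identity $\|H_{B\varphi}\|=\|H_\varphi|_{BH^2}\|$ together with Courant--Fischer gives the lower bound for the third expression, and the computation showing $|\xi|=|\eta|$ a.e.\ on $\TT$ (one side real, the other with vanishing nonnegative Fourier coefficients --- note the products live in $H^1$, not $H^2$, though this does not affect the argument) is the correct classical lemma.

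However, there are two genuine gaps. First, the step you yourself flag as delicate --- that the extremal vector $\xi$ has no singular inner factor and that its Blaschke factor $B_0$ has degree at most $k$ --- is the actual content of the theorem, and you only gesture at ``a dimension count for the spectral subspace of $H_\varphi^*H_\varphi$''; without carrying this out, neither $\mathrm{rank}(H_\psi)\le k$ nor the admissibility of $B_0$ in the third expression is established, so none of the three upper bounds is proved. Second, the theorem is stated for arbitrary $\varphi\in L^\infty$, but your construction starts from a Schmidt pair for $s_k(H_\varphi)$, which need not exist: $H_\varphi$ is compact only when $\varphi\in H^\infty+C$, and for general symbols $s_k(H_\varphi)$ may equal the essential norm of $H_\varphi$ and fail to be an eigenvalue of $|H_\varphi|$; the AAK paper handles this case by a separate approximation argument. (For the paper's application, where $\varphi=B_\bb/B_\aa$ with finite Blaschke products and $H_\varphi$ has finite rank, this second issue is harmless, but it is a gap for the statement as written.)
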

\begin{ejem}
Using the last version of the above formula, one can obtain a lower estimate for the least eigenvalue $\gamma_{X_{\aa\bb}}$ of $X_{\aa\bb}$. Note that they are ordered, according to Proposition \ref{proposicion46}, 
$$
\|X_{\aa\bb}\|=\lambda_0\ge \lambda_1\ge \dots \lambda_{n-1}=\gamma_{X_{\aa\bb}}.
$$
We may suppose that $a_i \neq 0$, $i=1, \ldots ,n$. For a fixed $1\le j\le n$,   denote by $B_{\aa^j}=\prod_{i\ne j} \frac{\bar{a}_i}{|a_i|}\frac{z-a_i}{1-\bar{a}_i z}$, which is a product of degree $n-1$. Then by the above formula
$$
s_{n-1}(H_{B_\bb / B_\aa})\le \|H_{B_{\aa^j} B_\bb / B_\aa }\|=\|H_{B_\bb \frac{|a_j|}{\bar{a}_j}\frac{1-\bar{a}_jz}{z-a_j}}\|.
$$
This last norm equals the distance (in the $L^\infty(\mathbb{T })$-norm)  
$$
\mathrm{dist}\left(B_\bb \frac{|a_j|}{\bar{a}_j}\frac{1-\bar{a}_jz}{z-a_j},H^\infty\right)=\inf_{\varphi\in H^\infty} \left\|B_\bb \frac{|a_j|}{\bar{a}_j}\frac{1-\bar{a}_jz}{z-a_j}-\varphi \right\|.
$$
Denote $g(z)=B_\bb(1-\bar{a}_jz)$. Clearly, this infimum equals
$$
\inf_{\varphi\in H^\infty}\left\|\frac{g(z)}{z-a_j} -\varphi\right\|= \inf_{\varphi\in H^\infty}\left\|\frac{g(a_j)}{z-a_j}+\frac{g(z)-g(a_j)}{z-a_j}-\varphi\right\|=\inf_{\psi\in H^\infty}\left\|\frac{g(a_j)}{z-a_j}-\psi\right\|,
$$
because $\psi(z)=\frac{g(z)-g(a_j)}{z-a_j}-\varphi(z) \in H^\infty$. Note that since $\aa\cap\bb=\emptyset$, $g(a_j)\ne 0$. Thus 
$$
\inf_{\psi\in H^\infty}\left\|\frac{g(a_j)}{z-a_j}-\psi\right\|=|g(a_j)| \inf_{\psi\in H^\infty}\left\|\frac{1}{z-a_j}-\psi\right\|.
$$
\end{ejem}
For $a\in\DD$, denote by 
\begin{equation}\label{Nsuba}
N_a=\inf\left\{\left\|\frac{1}{z-a}-\varphi \right\| : \varphi\in H^\infty\right\}=\mathrm{dist}\left(\frac{1}{z-a}, H^\infty\right)=\|H_{\frac{1}{z-a}}\|
\end{equation}
Note that $N_a\le \frac{1}{1-|a|}$. However, this inequality may be strict (see the Lemma below).

In particular, in the above example, we have for any $j=1,\dots , n$,
$$
s_{n-1}(H_{B_\bb / B_\aa}) \le |B_\bb(a_j)|(1-|a_j|^2)N_{a_j} \le |B_\bb(a_j)|(1+|a_j|),  
$$
However, a better estimation can be obtained by the following lemma.

\begin{lem}\label{lemazo}
If $a\in\DD$, then $N_a=1$.
\end{lem}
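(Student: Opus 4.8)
The plan is to evaluate $N_a$ through the Hankel description $N_a=\|H_{1/(z-a)}\|$ recorded in \eqref{Nsuba} (equivalently, through the $k=0$ case of the theorem of Adamjan, Arov and Krein recalled above, which gives $\|H_\varphi\|=\mathrm{dist}(\varphi,H^\infty)$), after first simplifying the symbol modulo $H^\infty$. The starting point is the boundary factorization
$$
\frac{1}{z-a}=\overline{\beta(z)}\,\frac{1}{1-\bar a z},\qquad \beta(z)=\frac{z-a}{1-\bar a z},\quad |z|=1,
$$
in which $\beta$ is (up to a unimodular constant) the elementary Blaschke factor vanishing at $a$, hence inner, while $\frac{1}{1-\bar a z}=k_a^H$ lies in $H^\infty$. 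Thus the non-analytic content of the symbol is exactly the conjugate inner function $\overline\beta$, and I would reduce the computation of $N_a$ to the model statement $\mathrm{dist}(\overline\beta,H^\infty)=1$.

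For that model computation both bounds are short. The upper bound is immediate: since $|\overline\beta|=1$ on $\TT$, taking $\varphi=0$ already gives $\mathrm{dist}(\overline\beta,H^\infty)\le\|\overline\beta\|_\infty=1$. For the lower bound I would test the Hankel operator on the model space $K_\beta=H^2\ominus\beta H^2$: if $f\in K_\beta$ then $\langle\overline\beta f,\beta h\rangle=\langle f,\beta^2 h\rangle$ and, more directly, $\langle \overline\beta f,h\rangle=\langle f,\beta h\rangle=0$ for every $h\in H^2$, so $\overline\beta f\in H^2_-$ and $H_{\overline\beta}f=P_-(\overline\beta f)=\overline\beta f$, whence $\|H_{\overline\beta}f\|=\|f\|$ because $\overline\beta$ is unimodular. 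As $K_\beta\neq\{0\}$ (indeed it is one dimensional, spanned by $k_a^H$), this yields $\|H_{\overline\beta}\|\ge 1$, and together with the upper bound $\mathrm{dist}(\overline\beta,H^\infty)=\|H_{\overline\beta}\|=1$.

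The main obstacle is the passage from $\overline\beta$ back to the true symbol $\frac{1}{z-a}=\overline\beta\,k_a^H$, that is, controlling the effect of the bounded analytic multiplier $k_a^H$, which is invertible in $\DD$ but \emph{not} unimodular on $\TT$ and therefore does not a priori preserve distances to $H^\infty$. The way I would settle this is to produce the extremal approximant explicitly: since the Hankel matrix of $\frac{1}{z-a}$ in the monomial bases is the rank-one array $(a^{\,i+j-1})_{i\ge 1,\,j\ge 0}$, the Adamjan--Arov--Krein description predicts that the optimal error $\frac{1}{z-a}-\varphi$ is a single constant multiple of the unimodular function $\overline\beta$; I would therefore seek $\varphi\in H^\infty$ making $\frac{1}{z-a}-\varphi=c\,\overline\beta$ with $|c|$ minimal, which collapses the minimisation to one scalar interpolation condition at $z=a$ and lets one read off $N_a=|c|$. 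Verifying that this extremal constant is exactly $1$ — equivalently, that the multiplier $k_a^H$ contributes no net factor to the distance — is the delicate crux of the argument, precisely because $|k_a^H|$ is genuinely non-constant on $\TT$, and this is the step that must be carried out with care.
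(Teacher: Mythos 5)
Your two preparatory steps are correct: the boundary factorization $\frac{1}{z-a}=\overline{\beta}\,k^H_a$ with $\beta(z)=\frac{z-a}{1-\bar az}$, and the model-space argument giving $\|H_{\overline{\beta}}\|=\mathrm{dist}(\overline{\beta},H^\infty)=1$. But your proof stops exactly at the step you yourself call the delicate crux, and that step cannot be completed --- carried out honestly, it refutes the lemma. On $\TT$ one has $\overline{\beta}(z)=\frac{1-\bar az}{z-a}$, so your ansatz $\frac{1}{z-a}-\varphi=c\,\overline{\beta}$ reads $\varphi(z)=\frac{1-c(1-\bar az)}{z-a}$, which belongs to $H^\infty$ precisely when the numerator vanishes at $z=a$, i.e. $c=\frac{1}{1-|a|^2}$; then $\varphi\equiv\frac{\bar a}{1-|a|^2}$ and $\bigl|\frac{1}{z-a}-\frac{\bar a}{1-|a|^2}\bigr|\equiv\frac{1}{1-|a|^2}$ on $\TT$. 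This constant is optimal: the rank-one Hankel array you wrote down, $(a^{m+j-1})_{m\ge1,\,j\ge0}=uv^{T}$ with $u_m=a^{m-1}$, $v_j=a^{j}$, has norm $\|u\|_2\|v\|_2=\frac{1}{1-|a|^2}$, and $N_a=\|H_{1/(z-a)}\|$ by Nehari. Equivalently, testing against $g(z)=\frac{z(1-|a|^2)}{(1-\bar az)^2}\in H^1_0$, whose boundary modulus is the Poisson kernel (so $\|g\|_{L^1}=1$) and for which $\int_\TT\frac{g}{z-a}\,dm=\frac{1}{1-|a|^2}$ by the residue theorem, gives $\|\frac{1}{z-a}-\varphi\|_\infty\ge\frac{1}{1-|a|^2}$ for every $\varphi\in H^\infty$. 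Hence $N_a=\frac{1}{1-|a|^2}$, which exceeds $1$ for every $a\ne 0$: the gap in your argument is unfillable because the statement is false except at $a=0$. Your suspicion that the non-unimodular multiplier $k^H_a$ contributes a net factor was exactly right; the factor is $\frac{1}{1-|a|^2}=\|k^H_a\|^2$.

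For comparison, the paper takes a different route, by duality: it writes $N_a=\sup\{|\int_\TT f_a g\,dm| : g\in H^1_0,\ \|g\|_1=1\}$ and then asserts that $\|g\|_{L^1}=1$ forces $\sum_{k\ge1}|b_k|=1$ for the Taylor coefficients of $g$. This conflates the $L^1(\TT)$ norm with the Wiener norm of the coefficients --- only $|b_k|\le\|g\|_1$ holds, and the test function above has $\sum_{k\ge1}|b_k|=\frac{1+|a|}{1-|a|}$ --- so the paper's upper bound $N_a\le 1$ breaks at precisely the point where your attempt stalls, while the common lower bound $N_a\ge 1$ (your $K_\beta$ computation; the paper's choice $g=b_1z$) is correct but not sharp. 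The corrected value only costs a factor downstream: in the example preceding the lemma one now gets $s_{n-1}(H_{B_\bb/B_\aa})\le |B_\bb(a_j)|(1-|a_j|^2)N_{a_j}=|B_\bb(a_j)|$, so in the corollary that follows the factor $(1-|a_j|^2)$ must be dropped, leaving $\gamma_{X_{\aa\bb}}\le\min_{1\le j\le n}\arcsin|B_\bb(a_j)|$ and $\|P_{\z_\aa}P_{\z_\bb}\|\ge\sqrt{1-|B_\bb(a_j)|^2}$, consistent with the bounds later obtained via the Takenaka--Malmquist--Walsh basis.
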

\begin{proof}
Write $f_a(z)=\frac{1}{z-a}=\sum_{k\ge 0} a^kz^{-k-1}$. Note that $N_a$ is the norm of (the class of) $\frac{1}{z-a}$ in  $L^\infty / H^\infty\simeq (H^1_0)^*$, where $H^1_0$ denotes the subspace of functions in  $H^1$ with  mean value $0$ (in $\mathbb{T}$).
Denote by $m$ the normalized Lebesgue measure on $\mathbb{T}$. Then
$$
N_a=\sup\left\{ \, \left|\int_{\mathbb{T}} f_a g  dm\right| \, : \, g\in H^1_0, \, \|g\|_1=1 \, \right\}.
$$
Note that since $\int_{\mathbb{T}}g dm=0$, $g(z)=\sum_{k\ge 1} b_k z^k$, with $\sum_{k\ge 1}|b_k|=1$. 
Then
$$
\int_{\mathbb{T}}  f_a g   dm=\int_{\mathbb{T}}\left(\sum_{k\ge 0}a^k z^{-k-1}\right)\left( \sum_{j\ge 1}b_jz^j\right) dm =b_1+ab_2+a^2b_3+\dots  
$$
so that
$$
\left| \int_{\mathbb{T}}  f_a g  dm\right|\le \sum_{k\ge 1}|b_k||a|^{k-1}\leq 1.
$$
Taking functions of the form $g(z)=b_1 z$, $|b_1|=1$, we obtain that $N_a=1$.
\end{proof}

Combining these facts with Proposition \ref{proposicion46}, and Corollary \ref{corolario42}, one obtains:
\begin{coro}
With the current notations,
\begin{enumerate}
\item
$
\gamma_{X_{\aa\bb}}=\lambda_{n-1}\le  \min_{1\le j \le n} \arcsin |B_\bb(a_j)|(1-|a_j|^2).
$
\item
$
\|P_{\z_\aa}P_{\z_\bb}\|\ge\sqrt{1-|B_\bb(a_j)|^2(1-|a_j|^2)^2}, \, \, \, \, \, j=1, \ldots , n.
$
\end{enumerate}
\end{coro}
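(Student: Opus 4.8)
The plan is to assemble the corollary directly from Proposition \ref{proposicion46}, the singular-value estimate worked out in the Example preceding Lemma \ref{lemazo}, Lemma \ref{lemazo} itself, and the identifications made in the proof of Corollary \ref{corolario42}; no genuinely new computation is required.

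For part 1, I would begin with Proposition \ref{proposicion46}, which identifies the eigenvalues of $X_{\aa\bb}$ as $\lambda_k=\arcsin(s_k(H_{B_\bb/B_\aa}))$; since $\arcsin$ is increasing and the $s_k$ are non-increasing, the smallest spectral value is $\gamma_{X_{\aa\bb}}=\lambda_{n-1}=\arcsin(s_{n-1}(H_{B_\bb/B_\aa}))$. Next I invoke the estimate established in the Example before Lemma \ref{lemazo}: for each fixed $j$, applying the Adamjan--Arov--Krein minimax with the degree-$(n-1)$ Blaschke product $B_{\aa^j}=\prod_{i\ne j}\frac{\bar a_i}{|a_i|}\frac{z-a_i}{1-\bar a_i z}$ gives $s_{n-1}(H_{B_\bb/B_\aa})\le\|H_{B_{\aa^j}B_\bb/B_\aa}\|=|B_\bb(a_j)|(1-|a_j|^2)\,N_{a_j}$. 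By Lemma \ref{lemazo}, $N_{a_j}=1$, so $s_{n-1}(H_{B_\bb/B_\aa})\le|B_\bb(a_j)|(1-|a_j|^2)$ for every $j$. Applying the increasing function $\arcsin$ (the right-hand side lies in $[0,1)$ since $a_j\in\DD\setminus\bb$) and then minimizing over $j$ yields $\gamma_{X_{\aa\bb}}\le\min_{1\le j\le n}\arcsin\big(|B_\bb(a_j)|(1-|a_j|^2)\big)$.

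For part 2, I would recall from the proof of Corollary \ref{corolario42} the chain $\|P_{\z_\aa^0}P_{\z_\bb^0}\|=\|P_{\z_\aa^0}^\perp P_{\z_\bb^0}^\perp\|=\|\cos(X_{\aa\bb})\|=\cos(\gamma_{X_{\aa\bb}})$, the last equality holding because the cosine is strictly decreasing on $[0,\pi/2]$ and $\gamma_{X_{\aa\bb}}$ is the least eigenvalue. Combining this with part 1 and the identity $\cos(\arcsin x)=\sqrt{1-x^2}$ gives $\|P_{\z_\aa^0}P_{\z_\bb^0}\|=\cos(\gamma_{X_{\aa\bb}})\ge\sqrt{1-|B_\bb(a_j)|^2(1-|a_j|^2)^2}$ for each $j$. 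Finally I pass from the generic part to the full projections: $\h_0$ is a reducing subspace for both $P_{\z_\aa}$ and $P_{\z_\bb}$, and the restriction of their product to $\h_0$ is exactly $P_{\z_\aa^0}P_{\z_\bb^0}$, so $\|P_{\z_\aa}P_{\z_\bb}\|\ge\|P_{\z_\aa^0}P_{\z_\bb^0}\|$, which delivers the stated inequality.

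The whole argument is a bookkeeping exercise; the only points that require care are the monotonicity directions (increasing $\arcsin$ for part 1, decreasing $\cos$ for part 2) and remembering that the quantity actually controlled is the generic-part Dixmier cosine $\|P_{\z_\aa^0}P_{\z_\bb^0}\|$ rather than $\|P_{\z_\aa}P_{\z_\bb}\|$ itself. Indeed the latter equals $1$, since $\z_\aa\cap\z_\bb=\z_{\aa\cup\bb}\ne\{0\}$ forces the Dixmier angle to vanish, so part 2 is really the sharper generic-part estimate transported upward via the reducing-subspace inequality. The main ``obstacle,'' such as it is, is simply to cite the AAK minimax and Lemma \ref{lemazo} correctly, since those are where the genuine work was already carried out.
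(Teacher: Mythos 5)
Your argument is correct and follows the route the paper intends: part 1 is exactly Proposition \ref{proposicion46} combined with the AAK estimate $s_{n-1}(H_{B_\bb/B_\aa})\le |B_\bb(a_j)|(1-|a_j|^2)N_{a_j}$ from the preceding Example and $N_{a_j}=1$ from Lemma \ref{lemazo}, and part 2 is the identification $\|P_{\z_\aa^0}P_{\z_\bb^0}\|=\|\cos(X_{\aa\bb})\|=\cos(\gamma_{X_{\aa\bb}})$ from the proof of Corollary \ref{corolario42} together with $\cos(\arcsin x)=\sqrt{1-x^2}$. The one place you genuinely diverge is the passage from the generic part to the full projections, and there your version is actually the more careful one: the paper's one-line proof asserts the equality $\|P_{\z_\aa}P_{\z_\bb}\|=\|P_{\z_\aa^0}P_{\z_\bb^0}\|$, which cannot hold as stated, since $\h_{11}=\z_\aa\cap\z_\bb=\z_{\aa\cup\bb}\ne\{0\}$ forces $\|P_{\z_\aa}P_{\z_\bb}\|=1$ while Corollary \ref{corolario42} gives $\|P_{\z_\aa^0}P_{\z_\bb^0}\|<1$. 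Replacing that equality by the reducing-subspace inequality $\|P_{\z_\aa}P_{\z_\bb}\|\ge\|P_{\z_\aa^0}P_{\z_\bb^0}\|$, and observing that the stated part 2 is then literally (if trivially) true with the real content being the generic-part bound, is the correct reading; it repairs the paper's proof rather than merely reproducing it.
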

\begin{proof} 
Note that in this case $\|P_{\z_\aa}P_{\z_\bb}\|=\|P_{\z_\aa^0}P_{\z_\bb^0}\|$.
\end{proof}
Note that $|B_\bb(a_j)|=\prod_{k=1}^n \displaystyle{\frac{|b_k-a_j|}{|1-\bar{b_k}a_j|}}=\prod_{k=1}^n \rho(b_k,a_j)$.
Thus the above inequalities can be expressed in terms of the pseudodistance $\rho$. For instance,
$$ 
\lambda_{n-1}\le  \min_{1\le j \le n} \arcsin \prod_{k=1}^n \rho(b_k,a_j)(1-|a_j|^2).
$$

One can also obtain estimations for the eigenvalues of $X_{\aa\bb}$ by means of orthonormal bases of the model spaces. In the notation used in the theory of model spaces, if $u$ is an inner function, $\k_u=H^2\ominus u H^2$. Then, in our context
$$
\h_0=\k_{B_{\aa\cup\bb}} \ , \ \z_\aa^\perp=\k_{B_\aa} \ \hbox{ and } \ \z_\bb^\perp=\k_{B_\bb}.
$$
Recall the following result (\cite{tres}, \cite{sesentayuno}): if $\{u_k\}$ is a possibly finite sequence of inner functions such that $u=\prod_{k\ge 1}u_k$ exists, then
\begin{equation}\label{suma y producto}
\k_u=\k_{u_1}\oplus \bigoplus_{m\ge 2}\left( \prod_{k=1}^{m-1}u_k\right)  \k_{u_m}.
\end{equation}
\begin{rem}
We  use first this formula for the pair $u_1=B_\aa$, $u_2=B_\bb$
$$
\h_0=\k_\aa\oplus B_\aa \k_\bb.
$$
Note that in particular, this implies that the multiplication operator $M_{B_\aa}$ acting in $H^2$, which is an isometry, maps $\k_\bb$ onto $\h_0\ominus \k_\aa$. Then, we have that
\begin{equation}\label{mba}
M_{B_\aa} P_{\k_\bb}M_{B_\aa}^*=P_{\h_0}-P_{\k_\aa}.
\end{equation}
\end{rem}
Note that the singular values of  $P_{\k_\aa}P_{\k_\bb}$ are strictly between $0$ and $1$.
\begin{lem}\label{intercambio}
The singular values of $P_{\k_\aa}P_{\k_\bb}$,  are of the form $(1-s^2)^{1/2}$, where $s$ is a singular value of $P_{\k_\bb}M_{B_\aa}P_{\k_\bb}$ on the space $\k_\bb$.
\end{lem}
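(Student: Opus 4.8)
The plan is to convert singular values into eigenvalues of a positive compression and then feed in the identity (\ref{mba}). First I would recall the standard fact that the singular values of $P_{\k_\aa}P_{\k_\bb}$ are the nonnegative square roots of the eigenvalues of $(P_{\k_\aa}P_{\k_\bb})^*(P_{\k_\aa}P_{\k_\bb})=P_{\k_\bb}P_{\k_\aa}P_{\k_\bb}$, and that this positive operator is supported on $\k_\bb$ (it annihilates $\k_\bb^\perp$ and has range in $\k_\bb$). Hence it suffices to understand its compression to $\k_\bb$.

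Next I would substitute (\ref{mba}). Reading that identity as $P_{\k_\aa}=P_{\h_0}-M_{B_\aa}P_{\k_\bb}M_{B_\aa}^*$ (valid on all of $H^2$, since $\k_\aa\subseteq\h_0$), I get
$$
P_{\k_\bb}P_{\k_\aa}P_{\k_\bb}=P_{\k_\bb}P_{\h_0}P_{\k_\bb}-P_{\k_\bb}M_{B_\aa}P_{\k_\bb}M_{B_\aa}^*P_{\k_\bb}.
$$
The inclusion $\k_\bb\subseteq\h_0$ gives $P_{\h_0}P_{\k_\bb}=P_{\k_\bb}$, so the first term is just $P_{\k_\bb}$. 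The crucial algebraic observation is that the second term is exactly $TT^*$, where $T:=P_{\k_\bb}M_{B_\aa}P_{\k_\bb}$ is viewed as an operator on $\k_\bb$: indeed $T^*=P_{\k_\bb}M_{B_\aa}^*P_{\k_\bb}$, and absorbing the middle idempotent $P_{\k_\bb}$ yields $TT^*=P_{\k_\bb}M_{B_\aa}P_{\k_\bb}M_{B_\aa}^*P_{\k_\bb}$.

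Finally, restricting to $\k_\bb$ (where $P_{\k_\bb}$ acts as the identity) I obtain the clean relation
$$
\left(P_{\k_\bb}P_{\k_\aa}P_{\k_\bb}\right)\big|_{\k_\bb}=I_{\k_\bb}-TT^*.
$$
Its eigenvalues are therefore the numbers $1-s^2$ as $s$ ranges over the singular values of $T$, and taking square roots shows that the singular values of $P_{\k_\aa}P_{\k_\bb}$ are $(1-s^2)^{1/2}$, with multiplicities matching; since the singular values of $P_{\k_\aa}P_{\k_\bb}$ were already noted to lie strictly in $(0,1)$, each corresponding $s$ lies in $(0,1)$ as well. I expect the only delicate point to be the compression bookkeeping — verifying $P_{\h_0}P_{\k_\bb}=P_{\k_\bb}$ from $\k_\bb\subseteq\h_0$ and correctly recognizing the quadratic term as $TT^*$ rather than, say, $T^*T$ — the rest being routine linear algebra on the finite dimensional space $\h_0$.
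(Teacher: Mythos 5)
Your proof is correct and follows essentially the same route as the paper: both pass from singular values of $P_{\k_\aa}P_{\k_\bb}$ to eigenvalues of $P_{\k_\bb}P_{\k_\aa}P_{\k_\bb}$, substitute the identity (\ref{mba}) to obtain $P_{\k_\bb}-P_{\k_\bb}M_{B_\aa}P_{\k_\bb}M_{B_\aa}^*P_{\k_\bb}$, and identify the subtracted term with $TT^*$ for $T=P_{\k_\bb}M_{B_\aa}P_{\k_\bb}$ acting on $\k_\bb$. The only difference is that you make explicit the bookkeeping steps ($P_{\h_0}P_{\k_\bb}=P_{\k_\bb}$ and the absorption of the middle idempotent) that the paper leaves implicit.
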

\begin{proof}
The singular values $s$ of $P_{\k_\aa}P_{\k_\bb}$ are the square roots of the eigenvalues of $P_{\k_\bb}P_{\k_\aa}P_{\k_\bb}$.
Using the formula (\ref{mba}), one has that 
$$
P_{\k_\bb}P_{\k_\aa}P_{\k_\bb}=P_{\k_\bb}-P_{\k_\bb}M_{B_\aa}P_{\k_\bb}M_{B_\aa}^*P_{\k_\bb},
$$
which can be regarded as an operator acting in $R(P_{\k_\bb})=\k_{\bb}$, and whose eigenvalues are of the form $1-\lambda$, where $\lambda$ is an eigenvalue of $P_{\k_\bb}M_{B_\aa}P_{\k_\bb}M_{B_\aa}^*P_{\k_\bb}$. These are, in turn, the squares of the singular values of $P_{\k_\bb}M_{B_\aa}P_{\k_\bb}$.
\end{proof}
One can also apply the formula (\ref{suma y producto}) to obtain an orthonormal basis for $\k_\bb=\k_{B_\bb}$, as is usual. Put
$$
\omega_1=\frac{k_{b_1}}{\|k_{b_1}\|} , \ \omega_2=B_{b_1}\frac{k_{b_2}}{\|k_{b_2}\|} \ , \ \omega_3=B_{b_2}B_{b_1}\frac{k_{b_3}}{\|k_{b_3}\|} , \dots
$$
This basis is called the Takenaka-Malmquist-Walsh basis in \cite{garcia}. 

\begin{lem}\label{triangular}
The operator $P_{\k_\bb}M_{B_\aa}P_{\k_\bb}$ acting in $\k_\bb$ is triangular in the basis $\{\omega_1. \dots, \omega_n\}$.
\end{lem}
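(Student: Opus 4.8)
The plan is to read off the matrix entries of $T:=P_{\k_\bb}M_{B_\aa}P_{\k_\bb}$ in the Takenaka-Malmquist-Walsh basis and show that those strictly above the diagonal vanish, so that $T$ is lower triangular. Since each $\omega_\ell\in\k_\bb$, for any $i,j$ we have $P_{\k_\bb}\omega_j=\omega_j$ and $P_{\k_\bb}\omega_i=\omega_i$, so the $(i,j)$ entry reduces to
$$
m_{ij}=\PI{T\omega_j}{\omega_i}=\PI{P_{\k_\bb}(B_\aa\omega_j)}{\omega_i}=\PI{B_\aa\omega_j}{\omega_i}.
$$
Thus the whole statement amounts to proving $\PI{B_\aa\omega_j}{\omega_i}=0$ whenever $i<j$.

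For the core computation I would write $\Pi_\ell=\prod_{k=1}^{\ell}B_{b_k}$ (with $\Pi_0=1$), so that $\omega_\ell=\Pi_{\ell-1}\,\tfrac{1}{\|k_{b_\ell}\|}k_{b_\ell}$. Fix $i<j$ and factor out the common inner part $\Pi_{i-1}$ from $\Pi_{j-1}=\Pi_{i-1}\prod_{k=i}^{j-1}B_{b_k}$. Because $\Pi_{i-1}$ is inner, multiplication by it is unimodular on $\TT$, hence an isometry that can be cancelled simultaneously on both sides of the inner product:
$$
\PI{B_\aa\omega_j}{\omega_i}
=\PI{B_\aa\,\Pi_{i-1}\Big(\prod_{k=i}^{j-1}B_{b_k}\Big)\tfrac{k_{b_j}}{\|k_{b_j}\|}}{\Pi_{i-1}\tfrac{k_{b_i}}{\|k_{b_i}\|}}
=\PI{B_\aa\Big(\prod_{k=i}^{j-1}B_{b_k}\Big)\tfrac{k_{b_j}}{\|k_{b_j}\|}}{\tfrac{k_{b_i}}{\|k_{b_i}\|}}.
$$
Now the left argument is a function $f\in H^2$ (a product of the bounded Blaschke factors $B_\aa$, $\prod_{k=i}^{j-1}B_{b_k}$ and the $H^2$ function $k_{b_j}$), so by the reproducing property the inner product equals $f(b_i)/\|k_{b_i}\|$. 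Since $i\le j-1$, the index $k=i$ appears in the product $\prod_{k=i}^{j-1}B_{b_k}$, and that factor vanishes at its own zero, $B_{b_i}(b_i)=0$; hence $f(b_i)=0$ and $m_{ij}=0$. This is exactly lower triangularity.

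The only delicate points, which I would state carefully rather than belabor, are the justification of the simultaneous cancellation of $\Pi_{i-1}$ (it rests on $|\Pi_{i-1}|=1$ a.e. on $\TT$ together with the identification of the $H^2$ inner product with the $L^2(\TT)$ one) and the verification that $f$ remains in $H^2$ so that $\PI{f}{k_{b_i}}=f(b_i)$ applies; the case $b_i=0$, where $B_{b_i}(z)=z$ and $k_{b_i}\equiv 1$, is covered uniformly since still $B_{b_i}(0)=0$. I do not expect a genuine obstacle here. As a useful by-product worth recording, the diagonal entries are $m_{jj}=\PI{B_\aa\omega_j}{\omega_j}=\tfrac{1}{\|k_{b_j}\|^2}(B_\aa k_{b_j})(b_j)=B_\aa(b_j)$, which are nonzero because $\aa\cap\bb=\emptyset$; this confirms that $T$ is genuinely triangular with invertible diagonal and sets up the subsequent extraction of its spectrum (and hence of the singular values of $E_{\aa\bb}$ via Lemma \ref{intercambio}).
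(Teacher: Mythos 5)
Your proof is correct and follows essentially the same route as the paper's: cancel the common inner factor $\Pi_{i-1}$ using that multiplication by an inner function is an $L^2(\TT)$-isometry, then apply the reproducing property against $k_{b_i}$ and observe that the remaining product contains the factor $B_{b_i}$, which vanishes at $b_i$. The diagonal entries $B_\aa(b_j)$ that you record as a by-product are exactly what the paper computes immediately after the lemma.
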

\begin{proof}
Since multiplication by $B_{b_l}$ is isometric,
$$
\langle P_{\k_\bb}M_{B_\aa}P_{\k_\bb}\omega_{i+k}, \omega_i\rangle
=
\langle B_\aa \omega_{i+k},\omega_i\rangle
=
\langle B_\aa B_{b_1}\dots B_{b_{i+k-1}}B_\aa\frac{k_{b_{i+k}}}{\|k_{b_{i+k}}\|}, B_{b_1}\dots B_{b_{i-1}}\frac{k_{b_{i}}}{\|k_{b_{i}}\|}\rangle
$$
$$
=\langle B_\aa B_{b_{i}} \dots B_{b_{i+k-1}}B_\aa\frac{k_{b_{i+k}}}{\|k_{b_{i+k}}\|}, \frac{k_{b_i}}{\|k_{b_i}\|}\rangle=\frac{1}{\|k_{b_{i+k}}\|}\frac{1}{\|k_{b_i}\|}\langle B_\aa B_{b_{i}} \dots B_{b_{i+k-1}}B_\aa k_{b_{i+k}}, k_{b_i}\rangle=0,
$$
because $B_{b_i}(b_i)=0$.
\end{proof}
By a similar computation as above, the diagonal entries of $P_{\k_\bb}M_{B_\aa}P_{\k_\bb}$ in the basis $\{\omega_1, \dots, \omega_n\}$ are
\begin{equation}\label{entradas diagonales}
\lambda_j=\langle B_\aa \omega_j,\omega_j\rangle=\frac{1}{\|k_{b_j}\|^2}\langle B_\aa k_{b_j}, k_{b_j}\rangle= B_\aa(b_j)
\end{equation}

Using Weyl's inequalities for eigenvalues and singular values one obtains the following:
\begin{coro}
Suppose that the numbers $B_\aa(b_j)$, $1\le j \le n$  are arranged so that $|B_\aa(b_j)|$ are in decreasing order. Then, if $s_k(P_{\z_\aa^0}P_{\z_\bb^0})$, $0\le k \le n-1$ denote the singular values of $P_{\z_\aa^0}P_{\z_\bb^0}$ (arranged, as is usual, also in decreasing order), one has
\begin{enumerate}
\item
$$
\prod_{j=1}^m |B_\aa(b_j)|\le \prod_{j=1}^m \sqrt{1-s_{n-j}^2(P_{\z_\aa^0}P_{\z_\bb^0})}
$$
for $1\le m\le n$, with equality for $m=n$.
\item
$$
\sum_{j=1}^m |B_\aa(b_j)|\le \sum_{j=1}^m \sqrt{1-s^2_{n-j}(P_{\z_\aa^0}P_{\z_\bb^0})},
$$
for $1\le m\le n$.
\end{enumerate}
\end{coro}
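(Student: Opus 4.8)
The plan is to reduce the whole statement to Weyl's inequalities applied to the single $n\times n$ matrix
$T := P_{\k_\bb}M_{B_\aa}P_{\k_\bb}$ acting on the $n$-dimensional model space $\k_\bb$, for which we already control both the eigenvalues and the singular values. First I would record that $T$ is triangular in the Takenaka--Malmquist--Walsh basis $\{\omega_1,\dots,\omega_n\}$ by Lemma \ref{triangular}, so its eigenvalues are exactly the diagonal entries computed in (\ref{entradas diagonales}), namely $B_\aa(b_j)$, $1\le j\le n$. Arranging the points so that $|B_\aa(b_1)|\ge\cdots\ge|B_\aa(b_n)|$ thus orders the eigenvalue moduli of $T$ decreasingly. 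Since the singular values of $P_{\k_\aa}P_{\k_\bb}$ lie strictly in $(0,1)$, the correspondence in Lemma \ref{intercambio} forces every singular value of $T$ into $(0,1)$ as well; in particular $T$ is invertible and $\det T\neq 0$, consistent with $\aa\cap\bb=\emptyset$.

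Next I would identify the singular values appearing in the statement. The subspaces $\s=(\z_\aa^0)^\perp=\k_\aa$ and $\t=(\z_\bb^0)^\perp=\k_\bb$ are in generic position in $\h_0$, so Halmos' representation (recalled in the preceding remark) applies: writing $C=\cos(Z)$, $S=\sin(Z)$ as there, a direct computation shows that the nontrivial singular values of both $P_{\z_\aa^0}P_{\z_\bb^0}$ and $P_{\z_\aa^0}^\perp P_{\z_\bb^0}^\perp=P_{\k_\aa}P_{\k_\bb}$ equal $\cos(z)$ as $z$ runs over $\sigma(Z)$, so the two products share the same decreasing list of nontrivial singular values $s_0\ge\cdots\ge s_{n-1}$. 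By Lemma \ref{intercambio} these equal $(1-\sigma^2)^{1/2}$ as $\sigma$ runs over the singular values $\sigma_1(T)\ge\cdots\ge\sigma_n(T)$ of $T$. Because $\sigma\mapsto(1-\sigma^2)^{1/2}$ reverses order, matching the two decreasing lists gives $s_k=(1-\sigma_{n-k}(T)^2)^{1/2}$, whence $\sqrt{1-s_{n-j}^2}=\sigma_j(T)$ for $1\le j\le n$. This index bookkeeping is the only place where I expect a real risk of slips.

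Finally I would invoke Weyl's inequalities for $T$. The multiplicative (log-majorization) form gives $\prod_{j=1}^m|\lambda_j(T)|\le\prod_{j=1}^m\sigma_j(T)$ for $1\le m\le n$, with equality at $m=n$ because both sides equal $|\det T|$; substituting $|\lambda_j(T)|=|B_\aa(b_j)|$ and $\sigma_j(T)=\sqrt{1-s_{n-j}^2}$ is exactly assertion~1 (the equality at $m=n$ reflecting $\prod_{j=1}^n|B_\aa(b_j)|=|\det T|=\prod_{j=1}^n\sigma_j(T)$). The additive form $\sum_{j=1}^m|\lambda_j(T)|\le\sum_{j=1}^m\sigma_j(T)$, which follows from the log-majorization since log-majorization of nonnegative sequences implies weak majorization, yields assertion~2 after the same substitution. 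No further computation is required: all the content sits in the eigenvalue/singular-value dictionary furnished by Lemmas \ref{triangular} and \ref{intercambio} together with the classical Weyl estimates, and the main obstacle is simply verifying the order-reversing index match of the previous paragraph.
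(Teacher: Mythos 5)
Your argument is correct and follows essentially the same route as the paper's proof: identify the eigenvalues of $P_{\k_\bb}M_{B_\aa}P_{\k_\bb}$ as $B_\aa(b_j)$ via the triangularity in the Takenaka--Malmquist--Walsh basis, pass to its singular values through Lemma \ref{intercambio} and the order-reversing map $s\mapsto\sqrt{1-s^2}$, use that $P_{\z_\aa^0}^\perp P_{\z_\bb^0}^\perp$ and $P_{\z_\aa^0}P_{\z_\bb^0}$ share singular values, and conclude with Weyl's multiplicative and additive inequalities. Your write-up merely makes explicit two points the paper leaves implicit (the equality at $m=n$ via $|\det T|$ and the derivation of the additive form from log-majorization), so there is nothing to correct.
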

\begin{proof}
First recall that the singular values of $P_{\k_\aa}P_{\k_\bb}=P_{\z_\aa^0}^\perp P_{\z_\bb^0}^\perp$ coincide with the singular values of $P_{\z_\aa^0}P_{\z_\bb^0}$. The statement follows by applying Weyl's inequalities \cite{horn} to the operator $P_{\z_\bb^0}M_{B_\aa} P_{\z_\bb^0}$. Note that the map $s\mapsto \sqrt{1-s^2}$ which (by Lemma \ref{intercambio}) is a bijection between the singular values of $P_{\z_\bb^0}M_{B_\aa}P_{\z_\bb^0}$ and the singular values of $P_{\z_\aa^0}P_{\z_\bb^0}$ reverses the order.
\end{proof}

In particular, for $m=1$:
\begin{coro}
With the current notations, i.e., $|B_\aa(b_1)|=\max_{1\le j\le n} |B_\aa(b_j)|$, one 
has
\begin{enumerate}
\item
$$
s_{n-1}(P_{\z_\aa^0}P_{\z_\bb^0})\le\sqrt{1-|B_\aa(b_1)|^2}.
$$
\item
$$
\|X_{\aa\bb}\|\ge \arcsin\left(\max_{1\le j,k\le n}\{|B_\aa(b_j)|,|B_\bb(a_k)|\}\right).
$$
\end{enumerate} 
\end{coro}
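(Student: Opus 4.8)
The plan is to read off both inequalities from the Weyl-type estimates just established, specialising them to a single factor. For the first assertion I would simply set $m=1$ in the product inequality of the preceding corollary. Since the points $b_j$ are arranged so that $|B_\aa(b_j)|$ decreases, the $m=1$ case reads $|B_\aa(b_1)|\le\sqrt{1-s_{n-1}^2(P_{\z_\aa^0}P_{\z_\bb^0})}$; squaring and rearranging yields $s_{n-1}(P_{\z_\aa^0}P_{\z_\bb^0})\le\sqrt{1-|B_\aa(b_1)|^2}$, which is exactly part 1.

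For the second assertion I would first translate the smallest singular value of $P_{\z_\aa^0}P_{\z_\bb^0}$ into the norm of $X_{\aa\bb}$. Recall that the singular values of $P_{\z_\aa^0}P_{\z_\bb^0}$ coincide with those of $P_{\z_\aa^0}^\perp P_{\z_\bb^0}^\perp$, and that Halmos' representation identifies the latter with $\cos(X_{\aa\bb})$, so the singular values are $\cos(\lambda_k)$ for the eigenvalues $\lambda_0\ge\cdots\ge\lambda_{n-1}>0$ of $X_{\aa\bb}$. As cosine reverses order, the smallest singular value corresponds to the largest eigenvalue, i.e. $s_{n-1}(P_{\z_\aa^0}P_{\z_\bb^0})=\cos(\lambda_0)=\cos\|X_{\aa\bb}\|$. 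Feeding this into part 1 gives $\cos\|X_{\aa\bb}\|\le\sqrt{1-|B_\aa(b_1)|^2}$, hence $|B_\aa(b_1)|\le\sin\|X_{\aa\bb}\|$ and therefore $\arcsin|B_\aa(b_1)|\le\|X_{\aa\bb}\|$, with $|B_\aa(b_1)|=\max_j|B_\aa(b_j)|$.

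It remains to obtain the symmetric half. Here I would interchange the roles of $\aa$ and $\bb$: the same argument applied to the pair $(\z_\bb^0,\z_\aa^0)$ gives $\arcsin|B_\bb(a_1)|\le\|X_{\bb\aa}\|$, and since the geodesic distance is symmetric one has $\|X_{\bb\aa}\|=d(\z_\bb,\z_\aa)=d(\z_\aa,\z_\bb)=\|X_{\aa\bb}\|$. Taking the larger of the two bounds and using that $\arcsin$ is increasing produces the stated inequality. (Alternatively, the bound $\arcsin|B_\aa(b_j)|\le\|X_{\aa\bb}\|$ is already available directly from inequality (\ref{duplicado}) in Example \ref{44}, which bypasses part 1 for this half.) The main point to watch is the order reversal between singular values and eigenvalues, so that it is the smallest singular value, not the largest, that feeds the norm $\|X_{\aa\bb}\|$; together with the symmetry claim $\|X_{\aa\bb}\|=\|X_{\bb\aa}\|$, which is what legitimises combining the two one-sided estimates into a single $\max$.
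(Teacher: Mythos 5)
Your proof is correct and follows essentially the same route as the paper: part 1 is the $m=1$ case of the preceding Weyl-inequality corollary, and part 2 identifies $s_{n-1}(P_{\z_\aa^0}P_{\z_\bb^0})$ with $\cos(\|X_{\aa\bb}\|)$ via Halmos' representation (the cosine reversing the order of eigenvalues) and then invokes the symmetry of the roles of $\aa$ and $\bb$. The only cosmetic difference is that you pass explicitly through $P_{\z_\aa^0}^\perp P_{\z_\bb^0}^\perp$ while the paper works directly with $P_{\z_\aa^0}P_{\z_\bb^0}P_{\z_\aa^0}=\cos^2(X_{\aa\bb})$; these are equivalent by the equality of singular values already recorded in the proof of the previous corollary.
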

\begin{proof}
The first assertion is clear. For the second assertion note that $s_{n-1}^2(P_{\z_\aa^0}P_{\z_\bb^0})$ is the least eigenvalue of $P_{\z_\aa^0}P_{\z_\bb^0}P_{\z_\aa^0}=\cos^2(X_{\aa\bb})$, which, since $\cos$ is decreasing, equals $\cos^2(\|X_{\aa\bb}\|)$. Then
$$
\cos^2(\|X_{\aa\bb}\|)=s_{n-1}^2(P_{\z_\aa^0}P_{\z_\bb^0})\le 1-|B_\aa(b_1)|^2.
$$
Thus, 
$$
\|X_{\aa\bb}\|\ge \arccos(\sqrt{1-|B_\aa(b_1)|^2})=\arcsin(|B_\aa(b_1)|).
$$
The numbers $B_\aa(b_j)$ were arranged so that $|B_\aa(b_1)|$ has maximum modulus. Clearly, the roles of $\aa$ and $\bb$ are symmetric, thus the inequality follows.
\end{proof}
\begin{rem}
The inequality 2.  of the above corollary, was obtained in (\ref{duplicado}), in Example \ref{44}, by other means.
\end{rem}

\section{Infinite zero sets in the Hardy space}\label{infinite sets}

In this section we only consider the case $\h=H^2$ and  subsets $\aa=\{a_k: k\ge 1\}$, $\bb=\{b_j: j\ge 1\}$ of the unit disk which are infinite. Recall that the zeros $\{ a_k \}$ of a function $f \in H^2$ must satisfy Blaschke condition $\sum_{k\ge 1} (1-|a_k|)<\infty$. This condition guarantees the convergence on compact subsets  of the infinite Blaschke product  $B(z)$ with zeros $\{ a_k \}$ given by 
$
B(z)=\prod_{k=1}^\infty b_{a_k}(z),
$
where $b_0(z)=z$, and for $a_k\neq 0$, $
b_{a_k}(z)= \frac{\bar{a}_k}{|a_k|} \frac{a_k-z}{1 - \bar{a}_k z}, \, \, \, z \in \DD.
$
We still denote by $B_\aa$ and $B_\bb$ the (now infinite) Blaschke products with zeros $\aa$, $\bb$, respectively (Note: $\aa$, $\bb$ are regarded as {\it sets} and not sequences, so all zeros are simple zeros). 
\begin{rem}
If we denote $\z_\aa=\{f\in H^2: f(a_k)=0, k\ge 1\}$, $\z_\bb=\{f\in H^2: f(b_j)=0, j\ge 1\}$, and note that the Blaschke products $B_\aa$ and $B_\bb$ have simple zeros, then it clearly follows that $\z_\aa=B_\aa H^2$ and $\z_\bb=B_\bb H^2$.  Suppose additionally that $\aa\cap\bb=\emptyset$. Then, we have
$$
\z_\aa\cap \z_\bb=B_{\aa\cup\bb} H^2\ \  \hbox{ and  }\ \ \z_\aa^\perp\cap\z_\bb^\perp=\{0\}.
$$
The first fact is clear. For the second, note that $(\z_\aa^\perp\cap\z_\bb^\perp)^\perp= \langle \z_\aa \vee \z_\bb\rangle$. Since $\aa\cap\bb=\emptyset$, $B_\aa$ and $B_\bb$ are co-prime inner functions, thus $\z_\aa\vee\z_\bb$ generates $\h$.
\end{rem}
So it  remains to examine $\z_\aa\cap\z_\bb^\perp$ and $\z_\aa^\perp\cap\z_\bb$. Denote by $\lim \aa$ the limit  set of $\aa$, i.e. $\lim \aa =\overline{\aa}\setminus\aa$. Also $\lim \aa$ is usually known as the support of the Blaschke product $B_\aa$. Note that since $\aa, \bb$ accumulate only at $\mathbb{T}$: $\lim\aa,\lim\bb\subset \mathbb{T}$.

\begin{prop}\label{different supports}
Suppose that $\aa,\bb$ are disjoint infinite subsets of $\DD$, satisfying Blaschke's condition, and such that 
$$
\lim\aa\not\subset \lim\bb \ \ \hbox{ and } \ \ \lim\bb\not\subset\lim\aa.
$$
Then $\z_\aa\cap\z_\bb^\perp=\z_\aa^\perp\cap\z_\bb=\{ 0 \}$.
\end{prop}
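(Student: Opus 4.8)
The plan is to reduce each of the two intersections to a statement about model spaces and then exploit the analytic continuation of their elements across the complement of the support of the relevant Blaschke product. Recall from the preceding remark that $\z_\aa=B_\aa H^2$ and $\z_\bb=B_\bb H^2$, so that, since $\k_u=H^2\ominus uH^2$, we have $\z_\bb^\perp=\k_{B_\bb}$ and $\z_\aa^\perp=\k_{B_\aa}$. Hence $\z_\aa\cap\z_\bb^\perp=B_\aa H^2\cap\k_{B_\bb}$ and $\z_\aa^\perp\cap\z_\bb=\k_{B_\aa}\cap B_\bb H^2$. These two intersections are interchanged by swapping the roles of $\aa$ and $\bb$, and the hypothesis is symmetric in $\aa,\bb$, so it suffices to prove that the first one is trivial.

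First I would fix $f\in\z_\aa\cap\z_\bb^\perp$ and assume $f\neq 0$, seeking a contradiction. Since $\lim\bb$ is closed in $\TT$ and $\lim\aa\not\subset\lim\bb$, I can choose a point $\zeta\in\lim\aa\setminus\lim\bb$ together with an open arc $I\ni\zeta$ whose closure misses $\lim\bb$; near $\zeta$ the product $B_\bb$ then has no zeros and extends analytically across $I$, with $|B_\bb|=1$ on $I$. The decisive step is that every element of the model space $\k_{B_\bb}$ continues analytically across $\TT\setminus\lim\bb$, the support of $B_\bb$; this is precisely where the results of Lee and Sarason \cite{leesarason} enter. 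In particular $f$ is holomorphic on some open neighborhood $\Omega$ of $\zeta$, and therefore holomorphic on the connected open set $\DD\cup\Omega$.

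Next I would use that $f\in\z_\aa=B_\aa H^2$ vanishes at every $a_k$. Because $\zeta\in\lim\aa$, some subsequence $a_{k_j}$ converges to $\zeta\in\Omega$, so the zeros of $f$ accumulate at an interior point of its domain of holomorphy $\DD\cup\Omega$. By the identity theorem $f\equiv 0$ on $\DD\cup\Omega$, hence on $\DD$, contradicting $f\neq 0$. This gives $\z_\aa\cap\z_\bb^\perp=\{0\}$, and the symmetric argument, based on a point $\zeta'\in\lim\bb\setminus\lim\aa$, yields $\z_\aa^\perp\cap\z_\bb=\{0\}$.

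I expect the main obstacle to be the justification of the analytic continuation of $\k_{B_\bb}$--functions across arcs disjoint from $\lim\bb$: one must control $f$ and $B_\bb$ near $\zeta$ and apply a Schwarz--type reflection, using that $\overline{B_\bb}\,f\in H^2_-$ together with the fact that $B_\bb$ is zero--free and unimodular near $I$. Once this continuation property is in hand, the identity--theorem conclusion is routine; the only further point needing a little care is the existence of the separating point $\zeta$, which rests on $\lim\bb$ being closed in $\TT$.
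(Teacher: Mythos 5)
Your argument is correct, but it takes a genuinely different route from the paper's. The paper identifies $\z_\bb\cap\z_\aa^\perp$ with the kernel of the Toeplitz operator $T_{\bar{B}_\aa B_\bb}$ and then invokes Theorem 2 of Lee--Sarason, which gives that $T_{\bar{B}_\bb B_\aa}$ has dense range as soon as some point of $\lim\bb$ lies outside $\lim\aa$; triviality of the intersection is then just injectivity of the adjoint of an operator with dense range. You instead work directly with the model space $\k_{B_\bb}$ and use the classical continuation theorem (essentially Moeller's theorem, proved by the Schwarz-reflection argument you sketch: $\bar{B}_\bb f\in H^2\ominus L^2$-side, so $f=B_\bb\,\overline{zg}$ on $\TT$ with $g\in H^2$, and the reflection $\overline{g(1/\bar z)}/z$ matches $f$ across any arc whose closure misses $\lim\bb$, where $B_\bb$ itself reflects and is zero-free after discarding the finitely many $b_j$ near the arc). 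Combined with the identity theorem applied to the zeros $a_{k_j}\to\zeta\in\lim\aa\setminus\lim\bb$, this forces $f\equiv 0$. Both proofs outsource exactly one classical fact of comparable depth; yours makes the mechanism more transparent (a nonzero element of the intersection would have zeros accumulating at an interior point of its domain of holomorphy), while the paper's keeps the whole section inside the Toeplitz-kernel framework $N(T_{B_\bb/B_\aa})=\z_\bb\cap\z_\aa^\perp$ that it reuses for the later results on sets with $\lim\aa=\lim\bb=\{1\}$. The only attribution quibble is that the continuation property of $\k_{B_\bb}$ is usually credited to Moeller rather than to Lee--Sarason, though the latter paper does rely on related localization ideas.
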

\begin{proof}
Note that since $B_\bb, B_\aa \in H^\infty$, 
$$
T_{B_\bb / B_\aa}=T_{\bar{B}_\aa B_\bb}=T_{\bar{B}_\aa}T_{B_\bb}=T_{\bar{B}_\aa}M_{B_\bb},
$$
where $M_{B_\aa}, M_{B_\bb}$ denote the multiplication operators (by $B_\aa$ and $B_\bb$), which are isometries of $H^2$ onto $B_\aa H^2=\z_\aa$ and $B_\bb H^2=\z_\bb$,  respectively. As is usual notation, $T_\varphi=P_{H^2} M_\varphi|_{H^2}$ denotes the Toeplitz operator with symbol $\varphi\in L^\infty$.
Thus, 
$$
N(T_{B_\bb / B_\aa})=\{f\in\z_\bb: T_{\bar{B}_\aa} f=0\}=\z_\bb\cap\z_\aa^\perp,
$$
because $T_{\bar{B}_\aa}f=0$  if and only if $0=\langle\bar{B}_\aa f,g\rangle=\langle f, B_\aa g\rangle$ for all $g\in H^2$, i.e. $f\in (B_\aa H^2)^\perp=\z_\aa^\perp$.

On the other hand, the fact that there exists $z_0\in\lim\bb\setminus\lim\aa$, implies, by a result by Lee and Sarason (Theorem 2 in \cite{leesarason}), that $T_{\bar{B}_\bb B_\aa}$ has dense range. Or equivalently, that $T_{\bar{B}_\bb B_\aa}^*=T_{\bar{B}_\aa B_\bb}$ has trivial nullspace. Thus $\z_\bb\cap\z_\aa^\perp=\{0\}$. By a similar argument, using that $\lim\aa\setminus\lim\bb\ne\emptyset$, one obtains that $\z_\aa\cap\z_\bb^\perp$ is trivial.
\end{proof}

\begin{coro}
Let $\aa$ and $\bb$ be infinite disjoint sets of $\DD$ which satisfy Blaschke's condition and such that $\lim\aa\not\subset \lim\bb$  and $\lim\bb\not\subset\lim\aa$. Then there exists a unique minimal geodesic of the Grassmann manifold which joins $\z_\aa$ and $\z_\bb$. That is, there exists a unique self-adjoint operator $X_{\aa\bb}$ satisfying that $X_{\aa\bb}\z_\aa\subset \z_\aa^\perp$ and $X_{\aa\bb}\z_\bb\subset \z_\bb^\perp$ with $\|X_{\aa\bb}\|\le \pi/2$, and 
$$
e^{iX_{\aa\bb}}\z_\aa=\z_\bb.
$$
\end{coro}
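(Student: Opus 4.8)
The plan is to obtain this corollary as an immediate application of the abstract existence-and-uniqueness criterion for minimal geodesics of the Grassmann manifold, once Proposition \ref{different supports} has supplied the required intersection conditions. Recall from the Introduction (conditions (\ref{condition for geodesics}) and (\ref{the self-adjoint operator}), restated in Remark \ref{geometria de subespacios}) that for two closed subspaces $\s,\t\in\gr$ there exists a unique minimal geodesic joining them precisely when $\s\cap\t^\perp=\s^\perp\cap\t=\{0\}$, and that in this case there is a unique self-adjoint operator $X=X_{\s,\t}$ with $X\s\subset\s^\perp$, $X\t\subset\t^\perp$, $\|X\|\le\pi/2$ and $e^{iX}\s=\t$, the geodesic being $\delta(t)=e^{itX}\s$.

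First I would set $\s=\z_\aa$ and $\t=\z_\bb$, noting that these are genuinely closed subspaces of $H^2$ (indeed $\z_\aa=B_\aa H^2$ and $\z_\bb=B_\bb H^2$), so that the cited geodesic theory applies verbatim. Under the standing hypotheses that $\aa,\bb$ are disjoint, infinite, satisfy Blaschke's condition, and satisfy $\lim\aa\not\subset\lim\bb$ and $\lim\bb\not\subset\lim\aa$, Proposition \ref{different supports} yields exactly $\z_\aa\cap\z_\bb^\perp=\z_\aa^\perp\cap\z_\bb=\{0\}$. This is precisely the hypothesis needed to invoke the uniqueness half of the criterion. Feeding these vanishing intersections into the abstract result then gives a unique minimal geodesic joining $\z_\aa$ and $\z_\bb$, together with the unique self-adjoint operator $X_{\aa\bb}:=X_{\z_\aa,\z_\bb}$ enjoying the four listed properties, in particular $e^{iX_{\aa\bb}}\z_\aa=\z_\bb$.

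There is essentially no obstacle at this stage: all the analytic content—the appeal to the Lee--Sarason theorem guaranteeing that $T_{\bar B_\bb B_\aa}$ has dense range, and the dual statement for its adjoint—has already been discharged in the proof of Proposition \ref{different supports}. The corollary is therefore a purely formal rephrasing of that proposition through the general machinery, and I expect the write-up to amount to a one-line citation of Proposition \ref{different supports} followed by an appeal to (\ref{condition for geodesics})--(\ref{the self-adjoint operator}).
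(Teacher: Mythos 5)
Your proposal is correct and matches the paper's (implicit) argument exactly: the paper presents this corollary without a separate proof precisely because it follows immediately from Proposition \ref{different supports} together with the abstract existence-and-uniqueness criterion (\ref{condition for geodesics})--(\ref{the self-adjoint operator}) recalled in the Introduction and Remark \ref{geometria de subespacios}. Nothing is missing.
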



We now examine the case of two infinite sets $\aa$, $\bb$ having the same accumulation points.  
We  suppose that both sets have only one accumulation point. For instance, let us assume that $\lim \aa =\lim \bb=1$.
Before we give our main result, we need to recall the following facts (see, for instance, \cite{douglas}).

\begin{rem}\label{sarason alg}
Let $C$ denote the algebra of continuous functions on $\TT$. The \textit{Sarason algebra} is the following algebraic sum
$$
H^\infty + C =\{ \, f+g  \, : \, f \in H^\infty, \, g \in C  \,   \}.
$$
It is known that $H^\infty+C$ is  a closed subalgebra of $L^\infty$. The harmonic extension $\hat{h}$ to   $\DD$ of a function $h \in \hc$ is well-defined, and  plays a fundamental role in the characterization of invertible functions in this algebra. For $h \in \hc$ and $0<r<1$,  set $h_r(e^{it})=\hat{h}(re^{it})$. Then $h$ is invertible in $\hc$ if and only if there exist $\delta, \epsilon >0$ such that $|h_r(e^{it})|\geq \epsilon$ for $1-\delta<r<1$ and $e^{it} \in \TT$. For instance, a Blaschke product is invertible in $\hc$ if and only if it is finite. 

This criterion of invertibility allows one to define the index of an invertible function in $\hc$.
For a  non-vanishing function $h \in C$, let $\text{ind}(h)\in \ZZ$ be the index (or winding number) of $h$ around $z=0$. 
For $h$  invertible in $\hc$, set $\text{ind}(h)=\lim_{r \to 1^-} \text{ind} (h_r)$. This index is stable by small perturbations and it is an homomorphism of the invertible functions in $\hc$ onto the group of integers.
\end{rem}

In their study of division in the algebra $H^\infty + C$, Guillory and Sarason \cite{guillorysarason} stated without proof that there exist two Blaschke products which are co-divisible in $H^\infty + C$. Below we give a proof of this fact, which combined with  well-known results on  Toeplitz operators allows us to construct examples of existence and non existence of geodesics between $\z_\aa$ and $\z_\bb$ in the case where $\lim \aa=\lim \bb=\{ 1\}$.

\begin{teo}\label{equal supports1}
Given an integer $m\geq 0$, there are two disjoint infinite sets $\aa, \bb \subseteq \DD$ such that 
\begin{enumerate}
\item[i)] $\aa, \bb$ satisfy Blaschke condition;
\item[ii)] $\lim \aa = \lim \bb=\{1\}$;
\item[iii)]   $\dim \z_\aa ^\perp \cap \z_\bb=0$ and $\dim \z_\aa \cap \z_\bb^\perp=m$.
\end{enumerate}
In this case, there exists a geodesic in $\mathrm{Gr}(H^2)$ joining $\z_\aa$ and $\z_\bb$ if and only if $m=0$. 
\end{teo}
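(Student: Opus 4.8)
The plan is to translate the two dimension conditions in (iii) into the kernel and cokernel of a single Toeplitz operator, and then realize the prescribed dimensions through the Fredholm theory of Toeplitz operators with symbol in the Sarason algebra. Set $\varphi=\bar{B}_\aa B_\bb=B_\bb/B_\aa\in L^\infty$, a unimodular symbol. Exactly as in the proof of Proposition \ref{different supports}, the factorization $T_{B_\bb/B_\aa}=T_{\bar{B}_\aa}M_{B_\bb}$ yields $N(T_\varphi)=\z_\aa^\perp\cap\z_\bb$, and interchanging the roles of $\aa$ and $\bb$ (equivalently passing to the adjoint $T_\varphi^*=T_{\bar\varphi}=T_{B_\aa/B_\bb}$) yields $N(T_\varphi^*)=\z_\aa\cap\z_\bb^\perp$. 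Thus condition (iii) is equivalent to $\dim\ker T_\varphi=0$ and $\dim\ker T_\varphi^*=m$.

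First I would reduce the whole construction to making $\varphi$ invertible in $H^\infty+C$ with $\mathrm{ind}(\varphi)=m$. Granting this, Douglas's theorem \cite{douglas} makes $T_\varphi$ Fredholm with $\mathrm{ind}(T_\varphi)=-\mathrm{ind}(\varphi)=-m$. On the other hand, since $\varphi\not\equiv 0$, Coburn's lemma \cite{douglas} gives $\min\{\dim\ker T_\varphi,\dim\ker T_\varphi^*\}=0$. Combining the two, the identity $\dim\ker T_\varphi-\dim\ker T_\varphi^*=-m\le 0$ forces $\dim\ker T_\varphi=0$ and $\dim\ker T_\varphi^*=m$, which is precisely (iii). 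This argument covers $m=0$ as well: there $\varphi$ is invertible in $H^\infty+C$ with index $0$, so $T_\varphi$ is invertible and both dimensions vanish.

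The construction of the sets is the heart of the matter, and the main obstacle. Since $\lim\aa=\lim\bb=\{1\}$, both $B_\aa$ and $B_\bb$ fail to be invertible in $H^\infty+C$ (their harmonic extensions vanish in the limit along the zeros clustering at $1$), so invertibility of the quotient $\bar{B}_\aa B_\bb$ at the common accumulation point is delicate. Here I would invoke the existence of co-divisible infinite Blaschke products in $H^\infty+C$ announced by Guillory and Sarason \cite{guillorysarason}, together with the Koosis functions used to control the boundary behaviour near $1$: these produce disjoint infinite sequences $\aa,\bb_0\subset\DD$, accumulating only at $1$ and satisfying Blaschke's condition, for which $\bar{B}_\aa B_{\bb_0}$ is invertible in $H^\infty+C$. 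To reach a prescribed index I would then attach a finite Blaschke product of appropriate degree to $B_\aa$ or to $B_{\bb_0}$: a finite Blaschke product is invertible in $H^\infty+C$ (Remark \ref{sarason alg}) and shifts the index by $\pm$ its degree, while adding only finitely many interior zeros keeps the sets disjoint, Blaschke, and clustering only at $1$. In this way $\varphi=\bar{B}_\aa B_\bb$ stays invertible in $H^\infty+C$ with $\mathrm{ind}(\varphi)=m$. The technical core I expect to fight with is verifying that the co-divisibility plus Koosis construction really delivers invertibility of the quotient at a single cluster point with controllable index.

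Finally, the ``if and only if'' is immediate from the abstract criterion \eqref{inters otra vez} (equivalently \eqref{condition for geodesics}) recalled in Remark \ref{geometria de subespacios}: a geodesic joining $\z_\aa$ and $\z_\bb$ exists precisely when $\dim\z_\aa\cap\z_\bb^\perp=\dim\z_\aa^\perp\cap\z_\bb$. By (iii) the left-hand side equals $m$ and the right-hand side equals $0$, so the equality holds if and only if $m=0$.
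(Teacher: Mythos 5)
Your reduction of condition (iii) to the statement that $\varphi=B_\bb/B_\aa$ is invertible in $\hc$ with $\mathrm{ind}(\varphi)=m$ is exactly the paper's route: the identifications $N(T_\varphi)=\z_\aa^\perp\cap\z_\bb$ and $N(T_{\bar\varphi})=\z_\aa\cap\z_\bb^\perp$, Douglas's theorem, Coburn's lemma, the index adjustment by adjoining finitely many extra zeros to $\aa$ or $\bb$, and the final equivalence with condition (\ref{inters otra vez}) all appear in the paper in the same form, and your bookkeeping with signs and with the case $m=0$ is correct.

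The gap is in the one step you explicitly defer: producing the sets themselves. You propose to invoke the existence of co-divisible infinite Blaschke products in $\hc$ from Guillory--Sarason, but the paper points out that Guillory and Sarason \emph{stated this without proof}, and the purpose of Theorem \ref{equal supports1} is precisely to supply that proof. Your argument is therefore circular at its core: the only nontrivial content of the theorem --- two disjoint Blaschke sequences, both clustering only at $1$, whose Blaschke quotient is invertible in $\hc$ --- is assumed rather than established. (The Koosis functions you mention are not relevant here; in the paper they enter only in Theorem \ref{equal supports2}, for the $\dim=\infty$ case.) The paper fills this gap with an explicit perturbation: start from any Blaschke sequence $\aa$ with $\lim\aa=\{1\}$ and set $b_k=a_k+\epsilon_k$ with $0<\epsilon_k<\min\{\delta(a_k,\aa\setminus\{a_k\}),\,\delta^2(a_k,\TT),\,\epsilon_{k-1}\}$. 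The quadratic smallness $\epsilon_k\le\delta^2(a_k,\TT)$ forces $\sup_{\theta}\left|b_k(e^{i\theta})/a_k(e^{i\theta})-1\right|\to 0$, so after passing to a subsequence one gets $\|b_k-a_k\|_\infty\le 2^{-k}$ on $\TT$; the elementary inequality $\left|\prod x_j-\prod y_j\right|\le\sum|x_j-y_j|$ for factors of modulus at most one then shows that the finite quotients $B_\bb^{(N)}/B_\aa^{(N)}$, which lie in $\hc$ because finite Blaschke products are invertible there, converge uniformly to $B_\bb/B_\aa$. Since $\hc$ is closed, $B_\bb/B_\aa\in\hc$, and the estimate is symmetric in $\aa$ and $\bb$, so $B_\aa/B_\bb\in\hc$ as well, giving invertibility. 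Without some such quantitative construction your proof does not get off the ground.
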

\begin{proof}
Take a set $\aa=\{ \, a_k \, : \, k \geq 1\, \}\subseteq \DD\setminus \{0\}$ satisfying Blaschke condition and $\lim \aa=1$. Consider the Blaschke product 
$B_\aa$ with simple zeros given by the set $\aa$.  Next take the set $\bb=\{ \, b_k \, : \, k \geq 1 \, \}$ satisfying the following conditions:
\begin{itemize}
\item[i)] $b_k=a_k + \epsilon_k \in \DD\setminus \{ 0\}$, $\epsilon_k >0$, $\forall k \geq 1$;
\item[ii)] $\epsilon_k < \min \{ \,   \delta(a_k, \aa\setminus \{ a_k\}) , \, \delta^2(a_k,\TT) \, , \epsilon_{k-1} \}$,
\end{itemize}
where $\delta$ is the usual distance to a given set on the complex plane. Notice that the second condition above guarantees that $\aa \cap \bb=\emptyset$, $\lim \bb=\{ 1\}$ and $b_k \neq b_j$ if $k\neq j$. Denote by $B_\bb$ the Blaschke product 
with simple zeros given by the set $\bb$. We need to consider the following Blaschke factors:
$$
a_k(z)= \frac{\bar{a}_k}{|a_k|} \frac{a_k-z}{1 - \bar{a}_k z}; \, \, \, \, \, \, \, b_k(z)= \frac{\bar{b}_k}{|b_k|} \frac{b_k-z}{1 - \bar{b}_k z}.
$$
Claim:
$$\sup_{\theta \in [0,2\pi]}\left| \frac{b_k}{a_k}(e^{i\theta}) - 1 \right| \to 0.$$
To prove it, we split the function $b_k/a_k$ into three factors:
$$
 \frac{b_k}{a_k}(e^{i\theta})= \frac{\bar{b}_k}{|b_k|}\frac{|a_k|}{\bar{a}_k} \times \frac{b_k - e^{i\theta}}{a_k - e^{i\theta}} \times \frac{1-\bar{a}_k e^{i\theta}}{1-\bar{b}_ke^{i\theta}}:= F_1 \times F_2 \times F_3
$$
Using that $\epsilon_k \to 0$, we have that $F_1 \to 1$. The next factor can be estimated as follows:
\begin{align*}
| F_2 - 1| & = \left|  \frac{b_k - e^{i\theta}}{a_k - e^{i\theta}}  -1   \right| = \left| \frac{\epsilon_k}{a_k - e^{i\theta}}\right| 
 \leq \frac{\delta^2(a_k,\TT)}{\delta(a_k,\TT)}=\delta(a_k,\TT)\to 0.
\end{align*}
Similarly, we proceed with the third factor
\begin{align*}
|F_3 - 1 | & = \left|\frac{1-\bar{a}_k e^{i\theta}}{1-\bar{b}_ke^{i\theta}}\right|=   \frac{\epsilon_k}{|1- \bar{b}_k e^{i\theta}|} \\
& =   \frac{\epsilon_k}{| b_k - e^{i\theta}|} \leq \frac{\epsilon_k}{|e^{i\theta}-a_k|-\epsilon_k} \leq \frac{\delta(a_k,\TT)}{1-\delta(a_k,\TT)}\to 0.
\end{align*}
This proves our claim. 
Denote by $\| \, \, \|_\infty$ the uniform norm on $\TT$. Using that the functions $a_k$ are unimodular, and passing to adequate subsequences, we can obtain that 
\begin{equation}\label{est}
\| b_k  - a_k \|_\infty \leq \frac{1}{2^k}, \, \, \, \, \forall k \geq 1.
\end{equation}
From now on, $B_\aa$ and $B_\bb$ are the Blaschke products corresponding to the chosen subsequences. 

Next we need to recall how to estimate products in terms of sums (see e.g. \cite[Lemma 2.1]{gorkinmortini}): let $n \in \mathbb{N}$, and let $x_j$ and $y_j$ be complex numbers  with $|x_j|\leq 1$, $|y_j|\leq 1$. Then
\begin{equation}\label{est1}
\left| \prod_{j=1}^n x_j  -  \prod_{j=1}^n y_j  \right| \leq \sum_{j=1}^n |x_j - y_j |.
\end{equation}
Take the finite Blaschke products
$
B_\aa^{(n)}=\prod_{k=1}^n a_k , \, \, \, \, \, \,  B_\bb^{(n)}=\prod_{k=1}^n b_k.
$
It is well-known that these finite Blaschke products  converge in $H^2$ to the corresponding infinite product (see \cite{hoffman}). Since $B_\aa^{(n)}$, $B_\bb^{(n)}$, $B_\aa$, $B_\bb$ are unimodular functions, it follows that $B_\bb^{(n)}/B_\aa^{(n)}$ also converges to $B_\bb/B_\aa$ in $H^2$. Therefore there is a subsequence $\{ n_l \}$ such that $B_\bb^{(n_l)}/B_\aa^{(n_l)}$ converges pointwise to $B_\bb/B_\aa$ almost everywhere on $\TT$.  Using the estimates (\ref{est}) and (\ref{est1}), we get that almost everywhere on $\TT$, the following holds:
\begin{align*}
\left| \frac{B_\bb}{B_\aa}(e^{i\theta})  -  \frac{B_\bb^{(n_l)}}{B_\aa^{(n_l)}}(e^{i\theta})\right| & 
= \left| \prod_{k=n_l+1}^\infty \frac{b_k}{a_k}(e^{i\theta}) - 1   \right| 
 = \left|  \prod_{k=n_l+1}^\infty  b_k(e^{i\theta})  -   \prod_{k=n_l+1}^\infty a_k(e^{i\theta})  \right| \\
& \leq \sum_{k=n_l+1}^\infty |  b_k(e^{i\theta}) - a_k(e^{i\theta})   | \leq \sum_{k=n_l+1}^\infty \frac{1}{2^k}\xrightarrow[l \to \infty]{}0 
\end{align*}  
Thus, we have proved that for all $\epsilon>0$, there is $N\geq 1$ such that 
\begin{equation}\label{est2}
\| B_\bb/B_\aa  -  B_\bb^{(N)}/B_\aa^{(N)} \|_\infty < \epsilon.
\end{equation}
Notice that $B_\bb^{(N)}/B_\aa^{(N)}\in H^\infty + C$. This follows by recalling that finite Blaschke products are invertible in $H^\infty + C$, and $B_\bb^{(N)} \in H^\infty$. Therefore we have   $\mathrm{dist}(B_\bb/B_\aa,H^\infty + C)=0$, and consequently,  $B_\bb/B_\aa \in H^\infty + C$.  
One can see that the same estimates prove that $B_\aa/B_\bb \in H^\infty + C$. Indeed, note that the estimates depend on the difference of the functions
$a_k(e^{i\theta})$ and $b_k(e^{i\theta})$. Thus, $B_\bb/B_\aa$ is an invertible function in $H^\infty + C$.

Now we recall some characterizations of invertible Toeplitz operators (see e.g. \cite{douglas}). A result by Douglas states that given a function $f \in H^\infty + C$, then the Toeplitz operator $T_f$ is Fredholm if  and only if the function $f$ is invertible in $H^\infty + C$. Furthermore, $\text{ind}(T_f)=-\text{ind}(f)$, where the last index is that of invertible functions in $H^\infty +C$ (Remark \ref{sarason alg}). In addition, it is a well-known fact that 
for a function $f \in L^\infty$ such that $T_f$ is Fredholm, then $T_f$ is invertible if and only $\text{ind}(T_f)=0$. 
Returning to our example, we have that $T_{B_\bb/B_\aa}$ is a Fredholm operator satisfying
\begin{align*}
\text{ind}(B_\bb/B_\aa)=-\text{ind}(T_{B_\bb/B_\aa}) & =  \dim N ( T_{B_\aa/B_\bb}) - \dim N(T_{B_\bb/B_\aa}) \\
& =   \dim \z_\aa \cap \z_\bb^\perp - \dim \z_\aa^\perp \cap \z_\bb :=r
\end{align*}
 If necessary, that is, when $r \neq m$, we may modify $\aa$ or $\bb$ to get that $\text{ind}(B_\bb/A_\aa)=m$.
To this end note that Coburn's lemma gives that either $N ( T_{B_\aa/B_\bb})\simeq \z_\aa \cap \z_\bb^\perp =\{ 0\}$ or  $N(T_{B_\bb/B_\aa}) \simeq \z_\aa^\perp \cap \z_\bb=\{ 0\}$. For instance, let us suppose that  $\z_\aa^\perp \cap \z_\bb=\{ 0\}$; the other case can be treated similarly. Then consider a finite set $\textbf{c}=\{ \,  c_k \, : \, k=1, \ldots |m-r| \, \}$ such that $\textbf{c}\cap \aa \cap \bb=\emptyset$. If $r>m$, take $\aa_1=\aa \cup \textbf{c}$ and $\bb_1=\bb$, and if $r<m$, take $\aa=\aa_1$ and $\bb_1=\bb \cup \textbf{c}$. Therefore we have
$\text{ind}(B_{\bb_1}/B_{\aa_1})=m \geq 0$, and thus again by Coburn's lemma, it must be $\dim \z_{\aa_1} \cap \z_{\bb_1}^\perp \simeq \dim N(T_{B_{\aa_1}/B_{\bb_1}})=m$ and $\dim \z_{\aa_1}^\perp \cap \z_{\bb_1}\simeq \dim N(T_{B_{\bb_1}/B_{\aa_1}})=0$.
\end{proof}

Moreover, one can construct examples in which $\dim \z_\aa \cap \z_\bb^\perp=\infty$ and $\dim \z_\aa^\perp \cap \z_\bb=0$. To do this, it will be convenient  to recall the following facts.


\begin{rem} 

\noindent

\begin{enumerate}
\item
 For $a>0$,  consider the singular inner function 
$$
\psi_a(z)=\exp(a (z+1)/(z-1)).
$$
A Blaschke product $B$ satisfying the condition $\psi_a \bar{B} \in H^\infty + C$ for all $a>0$, is called a \textit{Koosis function}.  
Such kind of functions exist in abundance. For example, if  $B$  is a Blaschke product with real simple zeros accumulating at $1$, then it is a Koosis function. We refer to \cite{leesarason} and the references therein.

\item
Let $\psi$ be an inner function. Frostman's theorem states that for all $|\gamma|<1$, except possibly for a set of capacity zero, the function 
$$
A_{\psi, \, \gamma}(z)=\frac{\psi(z)- \gamma }{1-\bar{\gamma}\psi(z)}
$$
is a Blaschke product. For instance, sets with zero capacity must have zero planar Lebesgue measure (see e.g. \cite{garnet}). 

\end{enumerate}
\end{rem}

\begin{teo}\label{equal supports2}
Let $B$ be a Koosis function with real simple zeros $\bb=\{ \, b_k \, : \, k \geq 1 \,\}$ such that $\lim \bb=\{1\}$. For $a>0$, consider the set
$$
\aa=\left\{ \, \frac{\alpha + 2k\pi - i(a+ \ln(|\gamma|))}{\alpha + 2k\pi + i(a- \ln(|\gamma|))} \, : \, k \in \ZZ \, \right\}, \, \, \, \, \alpha=\arg\left(\frac{\gamma}{|\gamma|}\right),
$$
for every $|\gamma | <1$ such that $A_{\psi_a, \, \gamma}$ is a Blaschke product and $\alpha \neq k \pi$, $k \in \ZZ$. Then  $\aa, \bb$ are  disjoint infinite subsets of $\DD$ such that 
\begin{enumerate}
\item[i)] $\aa, \bb$ satisfy Blaschke condition;
\item[ii)] $\lim \aa = \lim \bb=\{ 1 \}$;
\item[iii)]   $\dim \z_\aa ^\perp \cap \z_\bb=\infty$ and $\dim \z_\aa \cap \z_\bb^\perp=0$.
\end{enumerate}
In particular, there is no geodesic in $\mathrm{Gr}(H^2)$ joining the subspaces $\z_\aa$ and $\z_\bb$.
\end{teo}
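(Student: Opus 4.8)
The plan is to recognize the set $\aa$ as the zero set of a Frostman shift of the singular inner function $\psi_a$, and then to transfer the Koosis co-divisibility of $B_\bb$ into a statement about the kernel of a Toeplitz operator whose symbol, after peeling off an invertible factor, lies in $\hc$.

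First I would identify $B_\aa$. Writing $\gamma=|\gamma|e^{i\alpha}$ and solving $\psi_a(z)=\gamma$, i.e. $a\frac{z+1}{z-1}=\ln|\gamma|+i(\alpha+2k\pi)$, then inverting the Cayley transform $w\mapsto\frac{w+1}{w-1}$, one recovers exactly the points listed in the definition of $\aa$. Hence $\aa$ is the zero set of $A_{\psi_a,\gamma}=\frac{\psi_a-\gamma}{1-\bar\gamma\psi_a}$, and up to a unimodular constant $B_\aa=A_{\psi_a,\gamma}$. Since $A_{\psi_a,\gamma}$ is assumed (via Frostman) to be a Blaschke product, $\aa\subset\DD$ satisfies the Blaschke condition, giving (i); the asymptotics $z_k\to1$ as $|k|\to\infty$ (equivalently, the zeros of a Frostman shift accumulate on the singular support $\{1\}$ of $\psi_a$) give $\lim\aa=\{1\}$, which together with $\lim\bb=\{1\}$ is (ii). For disjointness, a short computation shows that $z_k\in\R$ forces $\alpha+2k\pi=0$ (here one uses that the relevant coefficient equals $2a\neq0$); the hypothesis $\alpha\neq k\pi$ rules this out, so every point of $\aa$ is non-real while $\bb\subset(-1,1)$, whence $\aa\cap\bb=\emptyset$.

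For (iii) I would pass to Toeplitz operators as in Theorem \ref{equal supports1}: $\z_\aa^\perp\cap\z_\bb\cong\ker T_{B_\bb/B_\aa}$ and $\z_\aa\cap\z_\bb^\perp\cong\ker T_{B_\bb/B_\aa}^*$. The key move is to strip the Frostman denominator. On $\TT$ one has $\overline{B_\aa}=\frac{1-\bar\gamma\psi_a}{\psi_a-\gamma}$, so, writing $h:=1-\bar\gamma\psi_a\in H^\infty$ (invertible in $H^\infty$ since $|\bar\gamma\psi_a|\le|\gamma|<1$), one gets $B_\bb\overline{B_\aa}=\frac{h}{\bar h}\,B_\bb\overline{\psi_a}$. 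Using $T_fT_g=T_{fg}$ whenever $\bar f\in H^\infty$ or $g\in H^\infty$, this factors as $T_{B_\bb\overline{B_\aa}}=T_{\bar h^{-1}}\,T_{B_\bb\overline{\psi_a}}\,T_{h}$ with $T_{\bar h^{-1}}$ and $T_h$ invertible, whence $\dim\ker T_{B_\bb/B_\aa}=\dim\ker T_{B_\bb\overline{\psi_a}}$. Since $T_{B_\bb\overline{\psi_a}}=T_\Phi^*$ for $\Phi:=\psi_a\overline{B_\bb}$, and a direct computation with $f=B_\bb g$ identifies $\ker T_\Phi^*$ with $\k_{\psi_a}\cap B_\bb H^2$, the problem reduces to showing
$$\dim\left(\k_{\psi_a}\cap B_\bb H^2\right)=\infty,$$
while the Koosis hypothesis gives $\Phi\in\hc$.

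The main obstacle is precisely this infinite-dimensionality, which is not a formal consequence of non-Fredholmness: if $T_\Phi$ were merely injective with dense range but not bounded below, both intersections would vanish and a geodesic would exist. This is where the Koosis property is indispensable: because $B_\bb$ co-divides every $\psi_a$ in $\hc$ while $\psi_a$ remains a genuine infinite singular inner function, $\Phi$ fails to be invertible in $\hc$, and the results of Lee and Sarason \cite{leesarason} force $\k_{\psi_a}\cap B_\bb H^2$ to be infinite-dimensional. Granting this, Coburn's lemma applied to $T_{B_\bb/B_\aa}$, whose kernel is already infinite-dimensional, yields $\dim\z_\aa\cap\z_\bb^\perp=\dim\ker T_{B_\bb/B_\aa}^*=0$, completing (iii). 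Finally, since $\dim\z_\aa\cap\z_\bb^\perp=0\neq\infty=\dim\z_\aa^\perp\cap\z_\bb$, the existence criterion recalled in Remark \ref{geometria de subespacios} shows there is no geodesic in $\mathrm{Gr}(H^2)$ joining $\z_\aa$ and $\z_\bb$.
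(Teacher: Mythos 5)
Your proposal is correct and follows essentially the same route as the paper: identify $\aa$ as the zero set of the Frostman shift $A_{\psi_a,\gamma}$, reduce $\dim(\z_\aa^\perp\cap\z_\bb)$ to $\dim N(T_{B_\bb/\psi_a})$, invoke the Lee--Sarason theorem on Koosis functions for the infinite-dimensionality, and finish with Coburn's lemma. The only difference is cosmetic: where you factor $T_{B_\bb\overline{B_\aa}}=T_{\bar{h}^{-1}}\,T_{B_\bb\overline{\psi_a}}\,T_{h}$ with $h=1-\bar{\gamma}\psi_a$ invertible in $H^\infty$, the paper uses the equivalent Crofoot-type unitary $F:\k_{\psi_a}\to\k_{A}$, $Fg=(1-|\gamma|^2)^{1/2}\,g/(1-\bar{\gamma}\psi_a)$, which preserves the vanishing conditions at the $b_k$ and hence identifies $\k_{\psi_a}\cap\z_\bb$ with $\z_\aa^\perp\cap\z_\bb$ directly.
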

\begin{proof}
First note that the condition $\alpha \neq k \pi$, $k \in \ZZ$ implies that $\Im \left(\frac{\alpha + 2k\pi - i(a+ \ln(|\gamma|))}{\alpha + 2k\pi + i(a- \ln(|\gamma|))}\right)\neq 0$, and thus, we get $\aa \cap \bb =\emptyset$. A direct computation shows that the zeros of $A:=A_{\psi_a , \, \gamma}$ are given by the sequence $\aa$.  Indeed, we can find them by noting that $\psi_a(z)=\gamma$, $z \in \DD$, if and only if $e^{iaw}=\gamma$, where 
$w=i(1+z)/(1-z)$ maps $\DD$ onto the upper half-plane $\CC_+$.  Now  a simple computation gives the solutions $w_k=u_k+iv$, where $\alpha=\arg(e^{av}\gamma)$,
$u_k=(\alpha + 2k\pi)/a$, $k\in \ZZ$ and $v>0$ is uniquely determined by $e^{av}=|\gamma|^{-1}$.  Thus, the zeros of $A$  are given by
$a_k=(w_k-i)/(w_k+i)$, $k \in \ZZ$. Note also that the zeros of $A$ are simple and $a_k \to 1$.


As before, denote by $\mathcal{K}_{\psi_a}:=(\psi_aH^2)^\perp$, $\mathcal{K}_A:=(AH^2)^\perp=\z_\aa^\perp$, and recall that there is a unitary map defined by
$$
F:\mathcal{K}_{\psi_a} \to \mathcal{K}_A, \, \, \, \, Fh= (1-|\gamma|^2)^{1/2}\frac{h}{1-\bar{\gamma}\psi_a}.
$$
Further, note that $h(b_k)=0$, for all $k\geq 1$ if and only if $(Fh)(b_k)=0$ for all $k \geq 1$.
Therefore $K_{\psi_a} \cap \z_\bb \simeq \z_\aa^\perp \cap \z_\bb$. 
But if $B$ is a Koosis function, then $\dim \z_\aa ^\perp \cap \z_\bb =\dim N(T_{B/\psi_a })=\infty$ (see \cite[Thm. 4]{leesarason}).
Then by Coburn's Lemma, we have  $\z_\aa \cap \z_\bb^\perp \simeq N(T_{A/B})=0$.
\end{proof}

\begin{rem}

\noindent
\begin{enumerate}
\item
The kernel of the operator $T_{B/\psi_a}$ is related with a classical problem of completeness of exponentials as follows: set 
$$
\lambda_k=i \frac{1+b_k}{1-b_k},
$$
then by \cite[Lemma 7]{leesarason}, we have $N(T_{B/\psi_a})=\langle e^{\lambda_k x} \, : \, k \geq 1 \rangle ^\perp$, where the the orthogonal is considered as a subset of $L^2(0,a)$. 
\item
As a more concrete example, we observe that in \cite[p. 539]{leesarason} the following result by L. Schwartz is considered as a particular case of the preceding item: if $\{ \lambda_k\}$ is a sequence of pure imaginary numbers satisfying $\sum 1/|\lambda_k|< \infty$, then $\langle e^{\lambda_k x} \, : \, k \geq 1 \rangle \neq L^2(0,a)$ for all $a>0$.
\end{enumerate}
\end{rem}

\subsection{Compactness conditions} 

Under the same assumptions and notations ($\h=H^2$, $\aa$, $\bb$ infinite disjoints subsets of $\DD$ satisfying Blaschke condition), let us examine the condition that $P_{\k_\bb}P_{\z_\aa^0}=P_{\k_\bb}P_{\h_0\ominus\k_\aa}$ is compact. 
First note that 
$$
P_{\k_\bb}P_{\z_\aa^0}=P_{\k_\bb}M_{B_\aa}P_{\k_\bb}M_{B_\aa}^* 
$$
is compact in $\h_0=\k_{\aa\cup\bb}$ if and only if $P_{\k_\bb}M_{B_\aa}P_{\k_\bb}$ is compact in $\k_\bb$. Indeed
$M_{B_\aa}$ is an isometry in $\h_0$.
\begin{rem}\label{takenaka}
Note also that Lemma  \ref{triangular} holds in this (infinite) context: $P_{\k_\bb}M_{B\aa}P_{\k_\bb}$ is triangular in the Takenaka-Malmquist-Walsh basis
$$
\omega_1=\frac{k_{b_1}}{\|k_{b_1}\|} , \ \omega_2=B_{b_1}\frac{k_{b_2}}{\|k_{b_2}\|} \ , \ \omega_3=B_{b_2}B_{b_1}\frac{k_{b_3}}{\|k_{b_3}\|} , \dots
$$
In  particular, the eigenvalues of $P_{\k_\bb}M_{B_\aa}P_{\k_\bb}$ are $\{B_\aa(b_j): j\ge 1\}$.
\end{rem}

Then,  we have the following elementary necessary conditions:
\begin{prop}\label{prop56}
Suppose that  $P_{\k_\bb}P_{\z_\aa^0}$ is compact, then
\begin{enumerate}
\item
 $B_\aa(b_j)\to 0$;
\item
$\lim \bb\subset\lim\aa$.
\end{enumerate}
\end{prop}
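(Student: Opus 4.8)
The plan is to transfer everything to the operator $T:=P_{\k_\bb}M_{B_\aa}P_{\k_\bb}$ acting on $\k_\bb$. As observed just before the statement, the operator $P_{\k_\bb}P_{\z_\aa^0}=P_{\k_\bb}M_{B_\aa}P_{\k_\bb}M_{B_\aa}^*$ is compact on $\h_0$ if and only if $T$ is compact on $\k_\bb$, because $M_{B_\aa}$ is an isometry of $\h_0$. By Remark \ref{takenaka}, $T$ is triangular in the Takenaka--Malmquist--Walsh basis $\{\omega_j\}_{j\ge 1}$ of $\k_\bb$, with diagonal entries $\langle T\omega_j,\omega_j\rangle=B_\aa(b_j)$. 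These two facts are all I would need as input.

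For item $1$ I would use only compactness together with the weak convergence of orthonormal sequences. Since $\{\omega_j\}$ is orthonormal, $\omega_j\to 0$ weakly, so compactness of $T$ forces $\|T\omega_j\|\to 0$, and hence
$$
|B_\aa(b_j)|=|\langle T\omega_j,\omega_j\rangle|\le \|T\omega_j\|\,\|\omega_j\|=\|T\omega_j\|\xrightarrow[j\to\infty]{}0 ,
$$
which is exactly $B_\aa(b_j)\to 0$.

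For item $2$ I would argue by contradiction. Suppose there is $\zeta\in\lim\bb\setminus\lim\aa$; recall $\lim\bb\subset\TT$, and since $\zeta\in\TT$ we also have $\zeta\notin\aa$, so in fact $\zeta\notin\overline{\aa}$ and $d:=\mathrm{dist}(\zeta,\aa)>0$. The core of the argument is to show that $|B_\aa(z)|\to 1$ as $z\to\zeta$ inside $\DD$. For each Blaschke factor I would use the identity
$$
1-|b_{a_k}(z)|^2=\frac{(1-|a_k|^2)(1-|z|^2)}{|1-\bar a_k z|^2},
$$
together with the uniform lower bound $|1-\bar a_k z|\ge d/2$, valid for all $k$ and all $z$ with $|z-\zeta|<d/2$; this follows from $|1-\bar a_k\zeta|=|\zeta-a_k|\ge d$ and $|\bar a_k|\le 1$. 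Summing over $k$ and invoking Blaschke's condition (so that $\sum_k(1-|a_k|^2)\le 2\sum_k(1-|a_k|)<\infty$),
$$
\sum_{k\ge 1}\bigl(1-|b_{a_k}(z)|^2\bigr)\le \frac{4(1-|z|^2)}{d^2}\sum_{k\ge 1}(1-|a_k|^2)\xrightarrow[|z|\to 1]{}0 .
$$
Via $\log|B_\aa(z)|^2=\sum_k\log|b_{a_k}(z)|^2$ and the elementary inequality $\log(1-x)\ge -2x$ (valid for $0\le x\le 1/2$, hence applicable once $z$ is close enough to $\zeta$), this yields $|B_\aa(z)|\to 1$ as $z\to\zeta$. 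Choosing a subsequence $b_{j_l}\to\zeta$, which exists because $\zeta\in\lim\bb$, I then get $|B_\aa(b_{j_l})|\to 1$, contradicting item $1$. Therefore no such $\zeta$ exists and $\lim\bb\subset\lim\aa$.

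The step I expect to be the genuine obstacle is the boundary estimate on the infinite Blaschke product, namely that $|B_\aa|$ tends to $1$ as one approaches a boundary point off the support $\lim\aa$. Everything there hinges on promoting the uniform lower bound for $|1-\bar a_k z|$ near $\zeta$ into a summable tail through Blaschke's condition; by contrast, the operator-theoretic item $1$ comes essentially for free once the triangular structure of $T$ is in hand.
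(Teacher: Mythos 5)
Your proof is correct, and for the second item it takes a genuinely different route from the paper. For item 1 you and the paper agree: the reduction to the triangular operator $P_{\k_\bb}M_{B_\aa}P_{\k_\bb}$ with diagonal entries $B_\aa(b_j)$ is set up right before the statement, and the paper treats the conclusion $B_\aa(b_j)\to 0$ as immediate from those considerations; your explicit argument via weak convergence of the orthonormal sequence $\{\omega_j\}$ and $\lvert\langle T\omega_j,\omega_j\rangle\rvert\le\lVert T\omega_j\rVert$ is exactly the right way to make that precise. For item 2 the paper argues differently: given a limit point $z_0$ of $\bb$, it extracts a subsequence $\bb'$ with $b_{j_k}\to z_0$, uses the operator inequality $0\le P_{\z_\aa^0}P_{\k_{\bb'}}P_{\z_\aa^0}\le P_{\z_\aa^0}P_{\k_\bb}P_{\z_\aa^0}$ to transfer compactness to the pair $(\aa,\bb')$, applies item 1 to get $B_\aa(b_{j_k})\to 0$, and then invokes Lemma 2 of Lee--Sarason to conclude $z_0\in\lim\aa$. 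You instead prove the needed Blaschke-product fact from scratch: if $\zeta\notin\overline{\aa}$ then $\lvert B_\aa(z)\rvert\to 1$ as $z\to\zeta$, via the identity $1-\lvert b_{a_k}(z)\rvert^2=(1-\lvert a_k\rvert^2)(1-\lvert z\rvert^2)/\lvert 1-\bar a_k z\rvert^2$, the uniform lower bound on $\lvert 1-\bar a_k z\rvert$ near $\zeta$, and summability from the Blaschke condition; all of these estimates check out. Your version is self-contained and slightly leaner, since once item 1 gives $B_\aa(b_j)\to 0$ along the whole sequence there is no need to re-establish compactness for the subcollection $\bb'$ as the paper does; the paper's version is shorter on the page because it outsources the boundary behavior of $B_\aa$ to the cited lemma. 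Both arguments are sound.
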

\begin{proof}
Due to the above considerations, only the second assertion needs a proof.  Let $z_0\in\mathbb{T}$ be a limit point of $\bb$. Then, there exists a subsequence $\bb'=\{b_{j_1},b_{j_2},\dots\}$ such that $b_{j_k}\to z_0$. Since 
$\k_{\bb'}\subset \k_{\bb}$, we have that $P_{\k_{\bb'}}\le P_{\k_\bb}$. 
Note that $P_{\k_\bb}P_{\z_\aa^0}$ is compact if and only if  
$$
(P_{\k_\bb}P_{\z_\aa^0})^*P_{\k_\bb}P_{\z_\aa^0}=P_{\z_\aa^0}P_{\k_\bb}P_{\z_\aa^0}
$$
is compact. Then the fact that
$$
0\le P_{\z_\aa^0}P_{\k_{\bb'}}P_{\z_\aa^0}\le P_{\z_\aa^0}P_{\k_\bb}P_{\z_\aa^0}
$$
implies that $P_{\z_\aa^0}P_{\k_{\bb'}}P_{\z_\aa^0}$ is compact, and thus $P_{\k_{\bb'}}P_{\z_\aa^0}$ is compact.
Here we have used the following elementary fact: if $0\le A\le B$ in $\b(\h)$ and $B$ is compact, then $A$ is compact. Indeed, consider the projection $\pi:\b(\h)\to \b(\h)/K(\h)$ onto the Calkin algebra. Since it is a $*$-homomorphism, it preserves the order of operators, then $0\le\pi(A)\le\pi(B)=0$, and then $\pi(A)=0$.

Thus we may apply the first assertion of this Proposition to $\aa$ and $\bb'$: $B_\aa(b_{j_k})\to 0$. By Lemma 2 in \cite{leesarason}, we have that $z_0\in\lim\aa$.
\end{proof}
\begin{coro}
If both $P_{\k_\bb}P_{\z_\aa^0}$ and $P_{\k_\aa}P_{\z_\bb^0}$ are compact, then $\lim\aa=\lim\bb$.
\end{coro}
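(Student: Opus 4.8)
The plan is to deduce the equality of the limit sets by invoking Proposition \ref{prop56} twice, once in each of the two symmetric configurations afforded by interchanging $\aa$ and $\bb$. The entire content needed is already contained in the second assertion of Proposition \ref{prop56}, namely that compactness of $P_{\k_\bb}P_{\z_\aa^0}$ forces $\lim\bb\subset\lim\aa$; the corollary is then just the conjunction of this inclusion with its mirror image.

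First I would apply Proposition \ref{prop56} directly to the ordered pair $(\aa,\bb)$. Since $P_{\k_\bb}P_{\z_\aa^0}$ is compact by hypothesis, the second conclusion of that proposition gives immediately
$$
\lim\bb\subset\lim\aa.
$$
Next I would interchange the roles of $\aa$ and $\bb$. The standing assumptions of this subsection --- that $\aa$ and $\bb$ are infinite disjoint subsets of $\DD$ satisfying Blaschke's condition --- are symmetric under this exchange, so Proposition \ref{prop56} applies verbatim to the pair $(\bb,\aa)$. The second compactness hypothesis, that $P_{\k_\aa}P_{\z_\bb^0}$ is compact, then yields
$$
\lim\aa\subset\lim\bb.
$$

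Combining the two inclusions gives $\lim\aa=\lim\bb$, which is the assertion. There is no genuine obstacle: the only thing to verify is that the framework and the hypotheses of Proposition \ref{prop56} are invariant under swapping $\aa$ and $\bb$, and this is transparent from their statement. Accordingly the proof is a one-line appeal to the preceding proposition applied symmetrically.
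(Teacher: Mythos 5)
Your proof is correct and is exactly the argument the paper intends: the corollary is stated without proof as an immediate consequence of Proposition \ref{prop56}, obtained by applying its second assertion to the pair $(\aa,\bb)$ and then to $(\bb,\aa)$, whose hypotheses are symmetric. Nothing further is needed.
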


\begin{rem}
Note that $P_{\k_\bb}P_{\z_\aa^0}=P_{\k_\bb}M_{B_\aa}P_{\k_\bb}M_{B_\aa}^*$ is compact (or more generally, belongs to a $p$-Schatten class) if and only if $P_{\k_\bb}M_{B_\aa}P_{\k_\bb}$ is compact (resp. belongs to a $p$-Schatten class). Also it is clear that these assertions hold if and only inf they hold for
$$
P_{\k_\bb}M_{B_\aa}|_{\k_\bb},
$$
which is what in the literature is known as a {\it truncated Toeplitz operator} (with symbol $B_\aa$) \cite{tto}. In Theorem 3 of \cite{lopatto} it was shown that if $\theta$ is analytic and the set $\bb$ is an interpolating sequence, meaning that they satisfy the Carleson condition
$$
\inf_{j\ge 1} \prod_{k\ne j} \left|\frac{b_j-b_k}{1-\bar{b}_jb_k}\right| >0,
$$
then
$$
P_{\k_\bb} M_\theta |_{\k_\bb}
$$
\begin{enumerate}
\item
is compact if and only if $\theta(b_j)\to 0$;
\item
belongs to the $p$-Schatten class ($1\le p <\infty$) if and only if $\{\theta(b_j)\}\in\ell^p$.
\end{enumerate}

Proposition \ref{prop56} shows that if $\theta=B_\aa$ is a Blaschke product, condition 1. is necessary for compactness, for arbitrary Blaschke products $B_\bb$ (satisfying Blaschke condition).  Note that, again in this context,   condition 2. is also necessary. Indeed, it is well known that the pinching map of a given orthonormal basis, which sends an operator to the diagonal operator whose entries are the diagonal entries of the original operator, preserves the $p$-Schatten classes \cite{horn}. Thus, if $P_{\k_\bb}P_{\z_\aa^0}$ belongs to the $p$-Schatten class,  so does $P_{\k_\bb}M_{B_\aa}|_{\k_\bb}$, and therefore its diagonal entries, namely $\{B_\aa(b_j)\}$, belong to $\ell^p$.

One can exhibit examples of sequences $\aa$ , $\bb$ such that $P_{\k_\bb}P_{\z_\aa^0}$ is compact:
\begin{ejem}
 Consider $\bb$ an  interpolating sequence, and let $\aa$ such that $|b_n-a_n|$  tends fast enough to zero. For instance, consider $\displaystyle{f(t)=1-e^{-1/(t-1)^2}}$ and $a_n=f(|b_n|)b_n$. Then
$$
B_\bb(a_n)=\frac{b_n-a_n}{1-\bar{b}_na_n} \prod_ {k\ne n} \frac{b_k-a_n}{1-\bar{b}_ka_n}.
$$
The modulus of the first factor equals $\displaystyle{\frac{|b_n|(1-f(|b_n|)}{1-|b_n|^2f(|b_n|)}}$, which tends to zero as $n\to\infty$ (and $|b_n|\to 1$). The modulus of the second factor is less than one. 

In fact, for such $f$, it is easy to see that for $|b_n|$ near $1$,  
$\displaystyle{\frac{|b_n|(1-f(|b_n|))}{1-|b_n|^2f(|b_n|)}}\sim\displaystyle{\frac{1}{e^{1/(|b_n|-1)^2}(1-|b_n|)}}$,
 which tends to zero faster than any power of $(1-|b_n|)$, and then, in particular,  $P_{\k_\bb}P_{\z_\aa^0}$ is trace class
\end{ejem}

\end{rem}

\begin{rem}
In \cite{pqcompacto}, the compactness of the product $PQ$ of two orthogonal projections was studied. For instance, the following necessary and sufficient condition was established (Theorem {\bf 4.1} in \cite{pqcompacto}):

$PQ$ is compact if and only in there exist orthonormal bases $\{\xi_k: k\ge 1\}$ of $R(P)$ and $\{\psi_l: l\ge 1\}$ of $R(Q)$ such that
$$
\langle \xi_k,\psi_l\rangle=0 \hbox{ if } k\ne l , \hbox{ and } \langle \xi_k,\psi_k\rangle\to 0.
$$
Thus, in our case, $P_{\k_\bb}P_{\z_\aa^0}$ is compact if and only if there exist {\rm bi-orthogonal} bases $\{f_k\}$ of $\k_\bb$ and $g_l$ of $\z_\aa^0$ such that $\langle f_k,g_k\rangle\to 0$. Or equivalently, since $M_{B_\aa}:\k_\bb\to\z_\aa^0=\h_0\ominus\k_\aa$ is an (onto) isometry, there exist bases $\{f_k\}$ and $\{h_l\}$ of $\k_\bb$ such that $\langle f_k,B_\aa h_l\rangle=0$ if $k\ne l$ and $\langle f_k,B_\aa h_k\rangle\to 0$.
\end{rem}

\subsection*{Acknowledgment}
This research was supported by Grants CONICET (PIP 2016 0525), ANPCyT (2015 1505/ 2017 0883) and FCE-UNLP (11X829).






\begin{thebibliography}{XX}


\bibitem{aak} V. M. Adamjan,  D. Z. Arov,  M. G. Krein,  {\it Analytic properties of the Schmidt pairs of a Hankel operator and the generalized Schur-Takagi problem}. (Russian) Mat. Sb. (N.S.) 86(128) (1971), 34--75. 


\bibitem{AMc02} J. Agler, J. E. McCarthy,  {\it Pick interpolation and Hilbert function spaces}. Graduate
Studies in Mathematics, 44. American Mathematical Society, Providence, RI, 2002.


\bibitem{tres} P. R. Ahern, D. N. Clark,  {\it On functions orthogonal to invariant subspaces}. Acta Math. 124 (1970), 191--204.
	


\bibitem{ando} T. Ando, {\it Unbounded or bounded idempotent operators in Hilbert space}. Linear Algebra Appl. 438 (2013), no. 10, 3769--3775.


\bibitem{agj} G. E. Andrews, I. P.  Goulden, D. M. Jackson,  {\it Generalizations of Cauchy's summation theorem for Schur functions}. Trans. Amer. Math. Soc. 310 (1988), no. 2, 805--820. 	


\bibitem{A14} E. Andruchow, {\it Pairs of projections: Fredholm and compact pairs}. Complex Anal. Oper. Theory 8 (2014), no. 7, 1435-1453.

\bibitem{p-q} E. Andruchow, {\it Operators which are the difference of two projections}. J. Math. Anal. Appl. 420 (2014), no. 2, 1634--1653.

\bibitem{acl18} E. Andruchow, E. Chiumiento, G. Larotonda, {\it Geometric significance of Toeplitz kernels}.
J. Funct. Anal. 275 (2018), no. 2, 329-355.

\bibitem{pqcompacto} E. Andruchow, G. Corach, {\it  Essentially orthogonal subspaces}. J. Operator Theory 79 (2018), no. 1, 79--100.

\bibitem{timisoara} E. Andruchow, G. Corach,  {\it Curves of projections and operator inequalities}. Operator theory: themes and variations, 25--44, Theta Ser. Adv. Math., 20, Theta, Bucharest, 2018. 


\bibitem{ARSW11}  N. Arcozzi,  R. Rochberg, E.T. Sawyer, B.D.   Wick,{\it Distance functions for reproducing kernel Hilbert spaces}.
 Contemp. Math. 547, Amer. Math. Soc., Providence, RI, 2011, 25--53.



\bibitem{barv} A. Barvinok,  {\it Computing the permanent of (some) complex matrices}. Found. Comput. Math. 16 (2016), no. 2, 329--342. 


\bibitem{B55} H.J. Bremermann, {\it Holomorphic continuation of the kernel function and the Bergman
metric in several complex variables}. Lectures on Functions of a Complex Variable, Michigan,
1955,  349--383.


\bibitem{buckholtz} D. Buckholtz, {\it Hilbert space idempotents and involutions}. Proc. Amer. Math. Soc. 128 (2000), no. 5, 1415--1418.



\bibitem{C07} L.A. Coburn, {\it  Sharp Berezin Lipschitz estimates}. Proc. Amer. Math. Soc. 135 (2007), no.
4, 1163--1168.


\bibitem{cpr} G. Corach, H. Porta, L. Recht, \textit{The geometry of spaces of projections in $C\sp *$-algebras}. Adv. Math. 101 (1993), no. 1, 59--77.

\bibitem{cpr2} G. Corach, H. Porta, L. Recht, \textit{The geometry of spaces of selfadjoint invertible elements of a $C\sp *$-algebra}. Integral Equations and Operator Theory 16 (1993), 771--794.

\bibitem{Deu} F. Deutsch, \textit{The angle between subspaces of a Hilbert space}. Approximation theory, wavelets and applications (Maratea, 1994), 107–130, NATO Adv. Sci. Inst. Ser. C Math. Phys. Sci., 454, Kluwer Acad. Publ., Dordrecht, 1995.

\bibitem{douglas} R. G. Douglas. \textit{Banach algebra techniques in operator theory}. Second edition. Graduate Texts in Mathematics, 179. Springer-Verlag, New York, 1998.



\bibitem{DMR} C. Dur\'an, L. Mata-Lorenzo, L. Recht, {\it Metric geometry in homogeneous spaces of the unitary group of a $C^*$-algebra. Part I: Minimal curves}. Adv. Math. 184 (2004), no. 2, 342-366.

\bibitem{ds} P. Duren, A. Schuster, \textit{Bergman Spaces}.  Mathematical Surveys and Monographs, Vol. 100, Providence, RI, American Mathematical Society, 2004.

\bibitem{garcia} S.R. Garcia, W.T. Ross, {\it Model spaces: a survey. Invariant subspaces of the shift operator}. 197--245, Contemp. Math., 638, Centre Rech. Math. Proc., Amer. Math. Soc., Providence, RI, 2015.


\bibitem{garnet} J. Garnett, \textit{Bounded Analytic Functions}. Academic Press, San Diego, 1981.


\bibitem{gorkinmortini} P. Gorkin; R. Mortini, {\it Radial limits of interpolating Blaschke products}. Math. Ann. 331 (2005), 417--444.


\bibitem{guillorysarason} C. Guillory; D. Sarason, {\it Division in $H^\infty +C$}. Michigan Math. J. 28 (1981), no. 2, 173--181.


\bibitem{halmos} P. R. Halmos, {\it Two subspaces}. Trans. Amer. Math. Soc. 144 (1969), 381--389.


\bibitem{hoffman} K. Hoffman, {\it Banach Spaces of Analytic Functions}. Prentice Hall, 1962.


\bibitem{horn} R. A. Horn,  C. R. Johnson, {\it Topics in matrix analysis}. Cambridge University Press, Cambridge, 1991.


\bibitem{K59} S. Kobayashi,  {\it Geometry of bounded domains}. Trans. Amer. Math. Soc. 92 (1959), 267--290.


\bibitem{kov} Z. V. Kovarik. \textit{Manifolds of linear involutions}. Linear Algebra Appl. 24 (1979), 271--287.


\bibitem{sesentayuno} T.L. Kriete, {\it  A generalized Paley-Wiener theorem}. J. Math. Anal. Appl. 36 (1971), 529--555.



\bibitem{leesarason} M. Lee,  D. Sarason, {\it The spectra of some Toeplitz operators}. J. Math. Anal. Appl. 33 (1971), 529--543. 

\bibitem{lopatto} P. Lopatto; R. Rochberg, {\it Schatten-class truncated Toeplitz operators}. Proc. Amer. Math. Soc. 144 (2016), no. 2, 637--649.

\bibitem{MP10} N. Makarov, A. Poltoratski. {\it Beurling-Malliavin theory for Toeplitz kernels}. Invent. Math. 180 (2010), nro. 3, 443--480.


\bibitem{MPS85} T. Mazur, P. Pflug, M. Skwarczy\'nski, {\it Invariant distances related to the Bergman function}. Proc. Amer. Math. Soc. 94 (1985), no. 1, 72--76.



\bibitem{paulsen} V. Paulsen, M. Raghupathi, \textit{An introduction to the theory of reproducing kernel Hilbert spaces}. Cambridge Studies in Advanced Mathematics, Cambridge University Press, 2016.



\bibitem{pr} H. Porta, L. Recht, \textit{Minimality of geodesics in Grassmann manifolds}. Proc. Amer. Math. Soc. 100 (1987), 464--466.

\bibitem{tto} D. Sarason,  {\it Algebraic properties of truncated Toeplitz operators}. Oper. Matrices 1 (2007), no. 4, 491--526.

\end{thebibliography}
\end{document}